\theoremstyle{plain}
\newtheorem{theorem}{Theorem}[section]
\newtheorem{conjecture}[theorem]{Conjecture}
\newtheorem{prop}[theorem]{Proposition}
\theoremstyle{definition}
\newtheorem{remark}[theorem]{Remark}
\newcommand{\mint}{\times\!\!\!\!\!\!\!\int}
\long\def\symbolfootnote[#1]#2{\begingroup
\def\thefootnote{\fnsymbol{footnote}}\footnote[#1]{#2}\endgroup} 
\def\alg{{\mathrm{alg}}}
\def\an{{\mathrm{an}}}
\def\lra{\longrightarrow}
\def\GL{{\bf GL}}
\def\A{\mathbf{A}}
\def\sR{\mathscr{R}}
\def\sgn{\mathrm{sgn}}
\def\N{\mathrm{N}}
\def\1{\mf{1}}
\DeclareMathOperator{\Dist}{Dist}
 \DeclareMathOperator{\Norm}{Norm}
\DeclareMathOperator{\rec}{rec} 
\DeclareMathOperator{\Hom}{Hom} 
\DeclareMathOperator{\sign}{sign}
\DeclareMathOperator{\ord}{ord} 
\DeclareMathOperator{\Id}{Id} 
 \DeclareMathOperator{\real}{Re}
 \DeclareMathOperator{\Frob}{Frob}
\DeclareMathOperator{\Ind}{Ind} 
\DeclareMathOperator{\Ta}{Ta}
\newcommand{\stack}[2]{\genfrac{}{}{0pt}{}{#1}{#2}}
\newcommand{\mf}{\mathfrak }
\newcommand{\mscr}{\mathscr }
\DeclareMathOperator{\Real}{Re}
\def\l{\ell}
\def\wmu{\widetilde{\mu}}
\def\wkappa{\widetilde{\kappa}}
\def\womega{\widetilde{\omega}}
\def\fa{\mathfrak{a}}
\def\fp{\mathfrak{p}}
\def\fq{\mathfrak{q}}
\def\fP{\mathfrak{P}}
\def\Z{\mathbf{Z}}
\def\Q{\mathbf{Q}}
\def\C{\mathbf{C}}
\def\R{\mathbf{R}}
\def\bdf{\begin{defn}}
\def\edf{\end{defn}}
\def\cD{\mathcal{D}}
\def\cO{\mathcal{O}}
\def\cU{\mathcal{U}}
\def\cV{\mathcal{V}}
\def\ff{\mathfrak{f}}
\def\fb{\mathfrak{b}}
\def\fc{\mathfrak{c}}
\def\ep{\epsilon}
\def\barX{\overline{X}}
\def\Gal{{\rm Gal}}
\def\Meas{{\rm Meas}}
\def\cF{{\cal F}}
\def\ram{\text{ram}}
\def\sL{{\mscr L}}
\def\sM{{\mscr M}}
\def\sI{{\mscr I}}
\def\sH{{\mscr H}}
\begin{document}
\title{On the Characteristic Polynomial of the Gross Regulator Matrix}
\author{Samit Dasgupta and Michael Spie\ss \\ 
}

\maketitle

\begin{abstract}  We present a conjectural formula for the principal minors and the characteristic polynomial of Gross's regulator matrix associated to a totally odd character of a totally real field.  The formula is given in terms of the Eisenstein
cocycle, which was defined and studied earlier by the authors and collaborators.  For the determinant of the regulator matrix, our conjecture follows from recent work of Kakde, Ventullo and the first author. For the diagonal entries, our conjecture overlaps with the conjectural formula presented in our prior work.  The intermediate cases are new and provide a refinement of the Gross--Stark conjecture.
\end{abstract}

\tableofcontents
\section{Introduction}

\maketitle

Let $F$ be a totally real field of degree $n$, and let \[ \chi\colon \Gal(\overline{F}/F) \longrightarrow \overline{\Q} \] be a totally odd character.
We fix once and for all a prime number $p$ and embeddings $\overline{\Q} \subset \C$ and $\overline{\Q} \subset \C_p$, so $\chi$ may be viewed as taking values in $\C$ or $\C_p$.  Let $H$ denote the fixed field of the kernel of $\chi$, and let $G = \Gal(H/F)$. As usual we view $\chi$ also as a multiplicative map on the semigroup of integral fractional ideals of $F$ by defining $\chi(\fq) = \chi(\Frob_{\fq})$ if $\fq$ is unramified in $H$ and $\chi(\fq) = 0$ if $\fq$ is ramified in $H$.

The field $H$ is a finite, cyclic, CM extension of $F$.  Let
$S_p$ denote the set of primes of $F$ lying above $p$.
We partition $S_p$ as $R \cup R_0 \cup R_1$, where $R$ denotes the primes split in $H$ (i.e.\ $\chi(\fp) = 1$),
$R_0$ denotes the primes ramified in $H$ (i.e.\ $\chi(\fp) =0$),
 and $R_1$ denotes the remaining primes above $p$. 

Let $S = S_{\ram} \cup S_p$ denote the union of $S_p$ with the set of finite places of $F$ that are ramified in $H$.  The Artin $L$-function
\[ L_S(\chi, s) = \sum_{(\fa, S)= 1} \frac{\chi(\fa)}{\N\fa^{s}} = \prod_{\fq} (1 - \chi(\fq)\N\fq^{-s})^{-1}, \quad \real(s) > 1, \]
has an analytic continuation to the entire complex plane.  If we simply write $L(\chi, s)$ for $L_{S_{\ram}}(\chi, s)$, then
\[ L_S(\chi, s) = \prod_{\fp \in R \cup R_1} (1 - \chi(\fp) \N\fp^{-s}) L(\chi, s). \]
It is known that $L(\chi, 0) \neq 0$.  We therefore find 
\begin{equation} \label{e:ordclassical}
 \ord_{s = 0} L_S(\chi, s) = r_\chi, \quad \text{ where } r_\chi = \# R,
\end{equation}
and furthermore 
\begin{equation}
\label{e:ltclassical}
 \frac{L_S^{(r_\chi)}(\chi, 0)}{r_\chi! L(\chi, 0)} = \sR(\chi) \prod_{\fp \in R_1} (1 - \chi(\fp)), \quad \text{ where } 
 \sR(\chi) = \prod_{\fp \in R} \log \N\fp.
\end{equation}
In this paper we refine Gross's conjectural $p$-adic analogs of (\ref{e:ordclassical}) and (\ref{e:ltclassical}), which we now recall.

Let \[ \omega: \Gal(F(\mu_{2p})/F) \lra (\Z/2p\Z)^* \lra \mu_{2(p-1)} \] denote the Teichm{\"u}ller character.  
There is a $p$-adic $L$-function \[ L_p(\chi\omega, s) \colon \Z_p \lra \C_p \] determined by the interpolation
property
\[ L_p(\chi \omega, k) = L_S(\chi\omega^k, k) \text{ for } k \in \Z^{\le 0}. \]
The existence of this function was proved independently by Deligne--Ribet \cite{dr} and Cassou-Nogu\`es \cite{cn} in the 1970s, and new approaches have been considered recently in \cite{pcsd}, \cite{cdg}, and \cite{spiess}.
Gross proposed the following conjecture regarding the leading term of $L_p(\chi\omega, s)$ at $s=0$.
\begin{conjecture}[Gross]  \label{c:gross} We have
\begin{enumerate} \item  \label{i:ov}
\[ \ord_{s=0} L_p(\chi\omega, s) \ge r_\chi. \] \label{e:g1}
\item \label{i:lt}
\[ \frac{L_p^{(r_\chi)}(\chi, 0)}{r_\chi! L(\chi, 0)} =  \sR_p(\chi) \prod_{\fp \in R_0} (1 - \chi(\fp)), \] where $\sR_p(\chi)$
is a certain regulator of $p$-units of $H$ defined  below. \label{e:g2}
\item $\sR_p(\chi) \neq 0$, so in view of (\ref{e:g1}) and (\ref{e:g2}) we have $\ord_{s=0} L_p(\chi\omega, s) = r_\chi.$
\end{enumerate}
\end{conjecture}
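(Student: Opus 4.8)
The three assertions are of quite different character. Parts (1) and (2) together constitute the Gross--Stark conjecture on the leading term of $L_p(\chi\omega,s)$ at $s=0$, for which a complete proof is now available; part (3) is a $p$-adic non-vanishing statement that is genuinely open beyond small rank. My plan is to deduce (1) and (2) from the Iwasawa Main Conjecture for totally real fields together with an explicit analysis of the Eisenstein cocycle, following the route of Darmon--Dasgupta--Pollack, Ventullo, and Kakde--Ventullo--Dasgupta, and then to treat (3) separately.

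\emph{Part (1).} Since $S$ contains $S_p$, the interpolation property exhibits $L_p(\chi\omega,s)$, up to a $p$-adic unit, as a product of the characteristic power series of a suitable Iwasawa (Ritter--Weiss) module with local factors attached to the primes above $p$. Each $\fp\in R$ satisfies $\chi(\fp)=1$ and contributes a local factor vanishing to order one at $s=0$; each $\fp\in R_1$ contributes a factor that is a unit at $s=0$; and each $\fp\in R_0$ contributes nothing. Reading off orders of vanishing gives $\ord_{s=0}L_p(\chi\omega,s)\ge r_\chi$; this inequality is a standard consequence of such a factorization.

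\emph{Part (2).} This is the hard core, and the strategy I would pursue is as follows. Deform $\chi$ over a one-variable Iwasawa algebra --- cyclotomically, or along a carefully chosen rank-one line --- so that the $r_\chi$-fold derivative $L_p^{(r_\chi)}(\chi,0)$ factors into a product of $r_\chi$ ``first-order'' contributions, one for each prime $\fp\in R$. The Eisenstein cocycle of the authors' prior work is precisely the device that produces, from such a deformation, classes in the cohomology of congruence subgroups whose specializations are $p$-adic logarithms $\log_p$ of Brumer--Stark $p$-units of $H$ with prescribed valuations --- and these specializations are exactly the entries of Gross's regulator matrix. One then (i) constructs the relevant classes from the Eisenstein cocycle, (ii) identifies each first-order contribution with $\log_p$ of a $p$-unit of $H$ having the correct $\fp$-adic valuation, and (iii) invokes the Main Conjecture to match the analytically defined leading term with the algebraically defined regulator, yielding
\[ \frac{L_p^{(r_\chi)}(\chi,0)}{r_\chi!\,L(\chi,0)}=\sR_p(\chi)\prod_{\fp\in R_0}(1-\chi(\fp)). \]
The principal difficulty is step (iii): the comparison must be exact, not merely up to a nonzero rational, and carrying it out is where the integrality and the explicit combinatorial shape of the Eisenstein cocycle are indispensable. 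This is the substance of the work of Kakde, Ventullo, and the first author that the paper invokes for the determinant, and it is the step I expect to be the main obstacle.

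\emph{Part (3).} Granting (1) and (2), the assertion $\ord_{s=0}L_p(\chi\omega,s)=r_\chi$ is equivalent to $\sR_p(\chi)\ne 0$, i.e.\ to the non-degeneracy of the $r_\chi\times r_\chi$ matrix whose entries are $p$-adic logarithms of $p$-units of $H$. When $r_\chi\le 1$ this reduces to the nonvanishing of a single $\log_p$ of an algebraic number that is not a root of unity, which follows from Brumer's $p$-adic analogue of Baker's theorem on linear forms in logarithms. For $r_\chi\ge 2$ I do not see how to avoid a genuinely new input: one needs a $p$-adic analogue of the nonvanishing of a higher regulator, in the circle of ideas around the $p$-adic four-exponentials and Schanuel-type conjectures, which appears to lie beyond current transcendence methods. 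I would therefore present (3) as proved for $r_\chi\le 1$ --- and more generally whenever the requisite $p$-adic linear independence of logarithms of the relevant $p$-units is known --- and otherwise conditional.
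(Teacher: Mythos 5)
This statement is a \emph{conjecture}, not a theorem, and the paper does not offer a proof of it; it simply records the status of each part in the surrounding discussion. Your proposal therefore cannot be compared with a "paper's proof," but it can be measured against what the paper actually asserts, and there several discrepancies appear. For part (1), you derive the inequality only from the Main Conjecture route, which (as the paper notes) excludes $p=2$; the paper stresses that the analytic proofs of \cite{pcsd} and \cite{spiess} cover all primes, and omitting this gives an incomplete account. For part (2), your sketch --- deforming $\chi$ so the $r_\chi$-fold derivative ``factors into $r_\chi$ first-order contributions'' produced by the Eisenstein cocycle --- does not describe the proof in \cite{dkv}: their argument goes through group-ring-valued Galois deformations and Ritter--Weiss modules, and moreover $\sR_p(\chi)$ is a determinant of an $r_\chi\times r_\chi$ matrix, not a product of $r_\chi$ separate first-order terms, so such a factorization is not available. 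The Eisenstein cocycle in this paper serves to give conjectural formulas for the regulator, not to prove Gross's conjecture. For part (3), the paper makes no claim that it is known in any rank, including $r_\chi=1$; the Brumer-type transcendence argument you invoke is plausible but would need a careful check that $\Norm_{F_\fp/\Q_p}(u_{\fp,\chi})$, divided by the relevant power of $p$, is not a root of unity, and the paper does not cite any such result. A faithful treatment of this item should simply report the known/unknown status of each part with the paper's citations rather than attempt a synthetic proof.
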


It can be shown that for $p \neq 2$, part (\ref{e:g1}) of Gross's conjecture follows from Wiles' proof of the Main Conjecture of Iwasawa theory.  However,
a more direct analytic proof that holds even for $p=2$ was given recently in \cite{pcsd} and \cite{spiess} (note that both of these latter papers use the general 
cohomological results of \cite{spiesshilb}).

Part (\ref{e:g2}) has been proven recently by the first author in joint work with M.\ Kakde and K.\ Ventullo \cite{dkv} (the case $r_{\chi}=1$ had been settled earlier in \cite{ddp} and \cite{vent}).

The goal of this paper is to study and refine Gross's $p$-adic regulator $\sR_p(\chi)$, which we now define.
For each prime $\fp \in S_p$, consider the group of $\fp$-units 
\[ U_\fp := \{u \in H^*: \ord_\fP(u) = 0 \text{ for all } \fP \nmid \fp\}. \]
Write
\begin{align*}
 U_{\fp, \chi} :=& \ (U_\fp \otimes \overline{\Q})^{\chi^{-1}} \\
  =& \ \{u \in U_\fp \otimes \overline{\Q}: \sigma(u) = u \otimes \chi^{-1}(\sigma) \text{ for all } \sigma \in G \} .
  \end{align*}
The Galois equivariant form of Dirichlet's unit theorem implies that 
\[ \dim_{\overline{\Q}} U_{\fp, \chi} = \begin{cases}
 1 & \text{ if } \fp \in R, \\
 0 & \text{ otherwise.}
 \end{cases} \]
 For $\fp \in R$, we let $u_{\fp, \chi}$ denote any generator (i.e.\ non-zero element) of $U_{\fp, \chi}$.
Consider the continuous homomorphisms
\begin{equation}
\begin{aligned}  \label{e:olpdef}
 o_\fp:= \ord_{\fp} \colon& \ F_\fp^* \longrightarrow \Z \\
\ell_\fp:= \log_p \circ  \Norm_{F_{\fp}/\Q_p} \colon& \ F_\fp^* \longrightarrow \Z_p.
 \end{aligned}
\end{equation}
 Suppose we choose for each $\fp \in R$, a prime $\fP_\fp$ of $H$ lying above $\fp$.
 Then for $\fp, \fq \in R$, via
 \[ U_\fp \subset H \subset H_{\fP_\fq} \cong F_\fq, \]
 we can evaluate $o_\fq$ and $\ell_\fq$ on elements of $U_\fp$, and extend by
 linearity to maps \[ o_\fq, \ell_\fq: U_{\fp, \chi} \longrightarrow \C_p. \]  (Of course, $o_\fq(U_{\fp, \chi}) =0$
 for $\fq \neq \fp$.)
 Define
\[ \sL_{\alg}(\chi)_{ \fp, \fq} = - \frac{\ell_\fq(u_{\fp, \chi})}{o_{\fp}(u_{\fp, \chi})},
\]
which is clearly independent of the choice of $u_{\fp, \chi} \in U_{\fp, \chi}$.
 Gross's regulator is the determinant of the $r_\chi \times r_\chi$ matrix whose entries are given by these values:
\[ \sR_p(\chi) := \det(\sM_p(\chi)), \quad \text{ where } \sM_p(\chi) :=  \left(\sL_{\alg}(\chi)_{ \fp, \fq}\right)_{\fp, \fq \in R}. \]

The functions $o_\fq$ and $\ell_\fq$ evaluated on $U_{\fp, \chi}$ depend on the choice of prime $\fP_\fq$ above $\fq$
used to embed $U_{\fp}$ into $F_\fq^*$.  If $\fP_\fq$ is replaced by $\fP_\fq^\sigma$ for $\sigma \in \Gal(H/F)$, then these functions are scaled by
$\chi(\sigma)$.  Accordingly, this change scales the $\fq$th row of $\sM_p(\chi)$ by $\chi(\sigma)^{-1}$ and the $\fq$th column by $\chi(\sigma)$.  In particular, the diagonal entries $\sL_{\alg}(\chi)_{ \fp, \fp}$ are independent of choices, as is the
regulator $\sR_p(\chi)$ and the characteristic polynomial of $\sM_p(\chi)$.  More generally, for any subset $J \subset R$, the principal minor of $\sM_p(\chi)$ corresponding to $J$ defined by
\[ \sR_p(\chi)_J :=   \det\left(\sL_{\alg}(\chi)_{ \fp, \fq}\right)_{\fp, \fq \in J} \]
is independent of choices.  Note that  the characteristic polynomial of a matrix can by expressed simply in terms of the principal minors by
\[
\det(t \cdot \mathbbm{1}_r - \sM_p(\chi)) = \sum_{k=0}^r t^k (-1)^{r-k} \sum_{\stack{J\subset R}{\#J=r-k}}\sR_p(\chi)_J
\]
where $r = r_\chi = \#R$. In this paper we present a conjectural formula for the individual $\sR_p(\chi)_J$ as well as for the characteristic polynomial of $\sM_p(\chi)$ in terms of
purely analytic data depending on $F$ (i.e.\ not depending on knowledge of the algebraic group of $\fp$-units in the extension $H$). 

Our conjectural formula is given in terms of the Eisenstein cocycle, which was defined and studied in \cite{pcsd}, \cite{cdg}, and \cite{spiess}.
For the purposes of this paper, we describe a simplified version of the cocycle, which is a certain group cohomology class
\[ \kappa_\chi \in H^{n-1}(E_R^*, \Meas(F_R, K)). \]
Here $E_R^*$ denotes the rank $n + r - 1$
group of totally positive $R$-units in $F$, 
\[ F_R := \prod_{\fp \in R} F_{\fp}, \]
and $K$ is a finite extension of $\Q_p$ containing the values of the character $\chi$.
 The group $\Meas(F_R, K)$
denotes the $K$-vector space of $K$-valued measures on $F_R$ (i.e.\ $p$-adically bounded linear forms $C_c(F_R, K)\lra K$ where $C_c(F_R, K)$ is the $K$-vector space of compactly supported continuous functions from $F_R$ to $K$). The space  $C_c(F_R, K)$ is endowed with an $F_R^*$-action by \[ (gf)(x):= f(g^{-1}x), \] which induces an action of $E_R^*$ via the diagonal embedding $E_R^* \subset F_R^*$.
This in turn induces an action of $E_R^*$ on the dual $C_c(F_R, K)^\vee$ and its subspace $\Meas(F_R, K)$. 

There are several constructions of the Eisenstein cocycle; in this paper we describe what is perhaps the simplest, in terms of Shintani cones.
In fact, to define the Eisenstein cocycle one must introduce a certain auxiliary prime $\lambda$ of $F$ and use it to employ a smoothing operation.
For simplicity in this introduction, we suppress the prime $\lambda$ from the notation.

Let  $J \subset R$ be a subset.  In \S \ref{section:mainconj} we describe how to define two $r$-cocycles \[ c_{\ell, J}, c_o \in H^r(E_R^*, C_c(F_R, K)). \]
 The subscripts $\ell$ and $o$ refer to the homomorphisms $\ell_\fp$ and $o_\fp$ in (\ref{e:olpdef}) used to define the cocycles for the primes $\fp \in J$.  Let
 \begin{equation} \label{e:vtdef}
  \vartheta \in H_{n+r-1}(E_R^*, \Z) \cong \Z \end{equation} be a generator. We
 define a constant 
\begin{equation} \label{e:rpandef1}
 \sR_p(\chi)_{J, \an} := (-1)^{\# J} \,\frac{c_{\ell, J} \cap (\kappa_\chi \cap \vartheta)}
 {c_o \cap (\kappa_\chi \cap \vartheta)} \in K.
 \end{equation}
The denominator of (\ref{e:rpandef1}) is non-zero because it can be shown 
to equal $L(\chi, 0) \prod_{\fp \in R_0} (1 - \chi(\fp))$ up to sign.
   We propose:
 \begin{conjecture}  \label{c:main} For each subset $J \subset R$, we have $\sR_p(\chi)_J = \sR_p(\chi)_{J, \an}$.
 \end{conjecture}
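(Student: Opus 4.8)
\emph{Overall strategy.} The plan is to compare the two sides by relating each to a higher-order term of the $p$-adic $L$-function attached to $\chi$: the analytic side via the interpolation property of the Eisenstein cocycle, and the algebraic side via the congruence method used to prove the Gross--Stark conjecture in \cite{dkv}. I would first record the elementary reformulation of the left-hand side. Because $u_{\fp,\chi}$ is a $\fp$-unit, $o_\fq(u_{\fp,\chi}) = 0$ for every $\fq \neq \fp$, so the matrix $\bigl(o_\fq(u_{\fp,\chi})\bigr)_{\fp,\fq\in J}$ is diagonal and
\[
 \sR_p(\chi)_J \;=\; (-1)^{\#J}\,
 \frac{\det\bigl(\ell_\fq(u_{\fp,\chi})\bigr)_{\fp,\fq\in J}}{\prod_{\fp\in J} o_\fp(u_{\fp,\chi})}\, .
\]
So, for each $J$, Conjecture~\ref{c:main} is the assertion that this normalized $\ell$-determinant equals the analytic ratio (\ref{e:rpandef1}).

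\emph{Step 1: the analytic side as a higher-order term of $L_p$.} I would invoke the defining interpolation property of the Eisenstein cocycle from \cite{pcsd}, \cite{cdg}, \cite{spiess}: the cap product $c_o \cap (\kappa_\chi \cap \vartheta)$ recovers the smoothed partial zeta values of $\chi$ at $s = 0$, whence the denominator of (\ref{e:rpandef1}) equals $\pm\, L(\chi,0)\prod_{\fp\in R_0}(1-\chi(\fp))$ as asserted; and the cocycles $c_{\ell,J}$ are built so that replacing $o_\fp$ by $\ell_\fp = \log_p\circ\Norm_{F_\fp/\Q_p}$ at the primes $\fp \in J$ extracts exactly the higher-order term of the (multivariable) $p$-adic $L$-function of $\chi$ detecting the trivial zeros at those primes. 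For $J = R$ this is, up to the constants already present in Conjecture~\ref{c:gross}, the leading term $L_p^{(r_\chi)}(\chi,0)/(r_\chi!\,L(\chi,0))$. This step is formal once the cocycle's interpolation is in hand, and it converts Conjecture~\ref{c:main} into a statement comparing higher derivatives of $L_p$ against sub-matrices of $\sM_p(\chi)$.

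\emph{Steps 2 and 3: matching with the regulator matrix.} For $J = R$ the resulting statement is part~(\ref{i:lt}) of Conjecture~\ref{c:gross} --- Gross's leading-term formula --- and hence a theorem by \cite{dkv} (the rank-one case being \cite{ddp} and \cite{vent}); for $\#J = 1$ it recovers the conjectural formula for the diagonal $\mathcal{L}$-invariants $\sL_{\alg}(\chi)_{\fp,\fp}$ studied in our earlier work. For a general $J$ I would run the Ribet--Wiles-type congruence argument of \cite{dkv}: construct a cuspidal Hilbert-modular cohomology class congruent, modulo a large power of $p$, to the Eisenstein class attached to $\kappa_\chi$; attach to it a $p$-adic Galois representation; and apply a reciprocity law to the associated extension classes to produce elements of $U_{\fp,\chi}$ together with control on their local invariants $o_\fq$ and $\ell_\fq$. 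The congruence should force the normalized matrix $\bigl(\ell_\fq(u_{\fp,\chi})\bigr)_{\fp,\fq}$ to coincide --- up to the simultaneous row/column rescaling which is the only indeterminacy in $\sM_p(\chi)$ --- with the matrix of higher-order terms produced in Step~1, and hence all principal minors to agree.

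\emph{The main obstacle.} In \cite{dkv} the congruence only has to control a rank-one situation: the existence of a single $\fp$-unit with non-vanishing $o$-invariant and the value of the resulting one-dimensional regulator. Comparing every principal minor requires instead pinning down the full regulator matrix, i.e.\ the precise shape --- not merely the order --- of the relevant Selmer or Iwasawa module within its augmentation filtration; this is a genuine strengthening of the Main Conjecture, demanding an exact description of the higher graded pieces of the module together with a proof that the cohomological construction is ``large enough'' to detect $\sR_p(\chi)_J$ for every $J$ rather than only the top minor $\sR_p(\chi)_R$. I would also expect the tempting shortcut --- reducing the case $\#J < r$ to the known case $J = R$ by replacing $(\chi,H)$ with an auxiliary pair whose set of $p$-adic split primes is $J$ --- to fail, since the primes of $R \setminus J$ genuinely remain split in $H$ and cannot be removed without altering the $\chi^{-1}$-eigenspace in which the $p$-units live.
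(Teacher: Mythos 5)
The statement you were asked to prove is a \emph{conjecture}, and the paper itself offers no proof of it in general: it establishes the conjecture only in the extremal case $J = R$ (Theorem~\ref{t:jr}, which reduces it to part~(\ref{e:g2}) of Conjecture~\ref{c:gross} and hence to the theorem of \cite{dkv}, with \cite{ddp} and \cite{vent} handling rank one), and, for $n=2$ with $\#J = 1$, shows compatibility with the earlier conjectural formula for the diagonal entries in \cite{das} (Theorem~\ref{t:jep}). The authors explicitly state that for $1 < \#J < r$ the conjecture is a new refinement of Gross--Stark. So there is no ``paper's own proof'' against which to compare your argument; what can be assessed is whether your outline correctly reproduces the paper's framing and whether you honestly locate what is missing.

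On that score your proposal is accurate. Your Step~1 is exactly the content of the proof of Theorem~\ref{t:jr}: one shows (Proposition~\ref{p:denom}) that $c_o \cap (\kappa_{\chi,\lambda}\cap\vartheta)$ recovers $(1-\chi(\lambda)\l)L(\chi,0)\prod_{\fp\in R_0}(1-\chi(\fp))$ up to sign, and then via the Iwasawa-theoretic interpolation of the smoothed cocycle and the Taylor-coefficient map $\Ta_{\le r}\circ\Xi$ that, for $J = R$, the numerator $c_{\l,R}\cap(\kappa_{\chi,\lambda}\cap\vartheta)$ equals $(1-\chi(\lambda)\l)L_p^{(r)}(\chi,0)/r!$ up to sign. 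Your reformulation of $\sR_p(\chi)_J$ as a normalized $\ell$-determinant (since $o_\fq(u_{\fp,\chi})=0$ for $\fq\neq\fp$) is correct. And, most importantly, your ``main obstacle'' paragraph correctly diagnoses why the intermediate cases remain open: the method of \cite{dkv} controls the determinant of $\sM_p(\chi)$ (equivalently, the characteristic ideal of the relevant module), but not the individual principal minors, and pinning these down requires a genuinely finer description of the module within its augmentation filtration than the Main Conjecture or the congruence argument delivers. You are also right that the shortcut of changing $(\chi,H)$ to shrink $R$ to $J$ does not work, because the primes in $R\setminus J$ remain split in $H$. In short, your proposal is not a proof, and it does not claim to be; it is a correct description of the evidence the paper presents and of the precise gap that keeps Conjecture~\ref{c:main} open for $1 < \#J < r$.
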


More generally for $t\in K$ we define a class $c_{to+\l} \in H^r(E_R^*, C_c(F_R, K))$ and propose 
\begin{conjecture}  \label{c:mainchar} 
For the characteristic polynomial of $\sM_p(\chi)$ we have
\[
\det(t \cdot \mathbbm{1}_r - \sM_p(\chi)) = \frac{c_{to+\ell} \cap (\kappa_\chi \cap \vartheta)}
 {c_{o,R} \cap (\kappa_\chi \cap \vartheta)}.
\]
\end{conjecture}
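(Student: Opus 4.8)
The characteristic polynomial identity of Conjecture~\ref{c:mainchar} is equivalent to the family of principal-minor identities of Conjecture~\ref{c:main}, and I would begin by making this explicit. Writing $r = r_\chi = \#R$, the expansion of the characteristic polynomial recalled above can be regrouped as a sum over subsets,
\[
\det(t\cdot\mathbbm{1}_r - \sM_p(\chi)) = \sum_{J\subset R} t^{r-\#J}(-1)^{\#J}\sR_p(\chi)_J,
\]
since for $\#J = r-k$ the sign $(-1)^{r-k}$ attached to $t^k$ equals $(-1)^{\#J}$. On the analytic side, the class $c_{to+\ell}$ is assembled in \S\ref{section:mainconj} as an $r$-fold cup product over the primes $\fp\in R$ of classes attached to the homomorphisms $t\cdot o_\fp + \ell_\fp$ of (\ref{e:olpdef}); expanding this product multilinearly produces the cochain-level identity
\[
c_{to+\ell} = \sum_{J\subset R} t^{r-\#J}\, c_{\ell,J},
\]
where $c_{\ell,J}$ is the $r$-cocycle built from $\ell_\fp$ for $\fp\in J$ and $o_\fp$ for $\fp\in R\setminus J$, so that in particular $c_{\ell,\emptyset} = c_o = c_{o,R}$. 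Capping both sides with $\kappa_\chi\cap\vartheta$, dividing by the nonzero denominator $c_o\cap(\kappa_\chi\cap\vartheta)$, and invoking the definition (\ref{e:rpandef1}), we obtain
\[
\frac{c_{to+\ell}\cap(\kappa_\chi\cap\vartheta)}{c_{o,R}\cap(\kappa_\chi\cap\vartheta)} = \sum_{J\subset R} t^{r-\#J}(-1)^{\#J}\sR_p(\chi)_{J,\an}.
\]
Comparing the two expansions, Conjecture~\ref{c:mainchar} holds if and only if $\sR_p(\chi)_J = \sR_p(\chi)_{J,\an}$ for every $J\subset R$ simultaneously, that is, if and only if Conjecture~\ref{c:main} holds in all cases.

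It therefore suffices to address Conjecture~\ref{c:main}, whose extreme cases are known. For $J=\emptyset$ both sides equal $1$. For $J=R$ the identity $\sR_p(\chi)=\sR_p(\chi)_{R,\an}$ follows by combining the relation, built into the construction of the Eisenstein cocycle in \cite{pcsd} and \cite{spiess}, which expresses $c_{\ell,R}\cap(\kappa_\chi\cap\vartheta)$ and $c_o\cap(\kappa_\chi\cap\vartheta)$ as $L_p^{(r_\chi)}(\chi,0)/r_\chi!$ and $L(\chi,0)\prod_{\fp\in R_0}(1-\chi(\fp))$ respectively, up to a common sign, with Gross's leading-term formula, part (\ref{e:g2}) of Conjecture~\ref{c:gross}, proven by M.\ Kakde, K.\ Ventullo and the first author in \cite{dkv}. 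For $\#J=1$ the number $\sR_p(\chi)_{\{\fp\},\an}$ is the diagonal entry $\sL_{\alg}(\chi)_{\fp,\fp}$, for which a conjectural analytic formula already appears in \cite{pcsd} and which is a theorem when $r_\chi=1$ by \cite{ddp} and \cite{vent}.

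The remaining, genuinely new content is Conjecture~\ref{c:main} for the intermediate subsets $\emptyset\neq J\subsetneq R$, and for these I would attempt to adapt the strategy of \cite{dkv}. On the analytic side, one integrates the coordinates indexed by $R\setminus J$ against the valuation maps $o_\fp$ to extract from $\kappa_\chi$ a ``partial'' $p$-adic $L$-function along the $J$-directions, whose leading term is $(-1)^{\#J}$ times $c_{\ell,J}\cap(\kappa_\chi\cap\vartheta)$ divided by the denominator of (\ref{e:rpandef1}). On the algebraic side, one would construct the Ritter--Weiss / Galois-cohomology module computing this leading term, adapted so that the primes $\fp\in J$ contribute through $\ell_\fp$ and the primes $\fp\in R\setminus J$ through $o_\fp$, and extract from it the minor $\sR_p(\chi)_J$. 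The main obstacle I anticipate is the interference of the ``extra'' units $u_{\fp,\chi}$ for $\fp\in R\setminus J$: these do not vanish, since $\chi(\fp)=1$, and the off-diagonal entries $\sL_{\alg}(\chi)_{\fp,\fq}$ with $\fp\in J$ and $\fq\in R\setminus J$ are in general nonzero, so the $J\times J$ block does not decouple cleanly from the full matrix $\sM_p(\chi)$. Overcoming this would seem to require either a refined filtration of the relevant Iwasawa module isolating the $J$-component, or a multivariable $p$-adic $L$-function together with a specialization argument that extracts the minor $\sR_p(\chi)_J$ from the full determinant; I do not currently see how to carry either out unconditionally, which is why the refined statement is offered here only as a conjecture.
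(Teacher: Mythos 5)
The first half of your argument correctly reproduces the paper's derivation: expanding $c_{to+\ell}$ multilinearly as a sum over subsets $J$ (which is legitimate because $g\mapsto c_g$ is linear in the homomorphism $g$, so $c_{to_{\fp_i}+\ell_{\fp_i}} = t\,c_{o_{\fp_i}}+c_{\ell_{\fp_i}}$), capping against $\kappa_\chi\cap\vartheta$, dividing by the denominator, and comparing with the principal-minor expansion of $\det(t\cdot\mathbbm{1}_r-\sM_p(\chi))$ shows that Conjecture~\ref{c:mainchar} follows from Conjecture~\ref{c:main}. This is exactly what the paper does in \S\ref{section:mainconj}, and your $J=\emptyset$ and $J=R$ checks, as well as the citation to \cite{dkv} for $J=R$, are fine (though the one-element-diagonal conjecture you mention lives in \cite{das}, not \cite{pcsd}).

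However, your claim of equivalence --- ``Conjecture~\ref{c:mainchar} holds if and only if $\sR_p(\chi)_J = \sR_p(\chi)_{J,\an}$ for every $J\subset R$'' --- is wrong, and the paper explicitly flags this: it states that Conjecture~\ref{c:mainchar} follows from Conjecture~\ref{c:main} \emph{but is slightly weaker}. The reason is that equality of polynomials in $t$ only forces the coefficients to agree, and the coefficient of $t^k$ on each side is a \emph{sum} over all $J$ with $\#J=r-k$. That is, the polynomial identity is equivalent to
\[
\sum_{\#J=r-k}\sR_p(\chi)_J \;=\; \sum_{\#J=r-k}\sR_p(\chi)_{J,\an} \qquad\text{for each } k=0,\ldots,r,
\]
which is strictly weaker than the term-by-term equalities $\sR_p(\chi)_J=\sR_p(\chi)_{J,\an}$ once $2\le\#J\le r-1$, since distinct subsets of the same cardinality could in principle trade off discrepancies that cancel in the sum. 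So the ``only if'' direction of your claim fails, and the two conjectures are not interchangeable.

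A smaller remark: the final paragraph, sketching a would-be attack on the intermediate cases $\emptyset\ne J\subsetneq R$ via Ritter--Weiss-type modules, is not part of what the paper does (the statement in question is a \emph{conjecture}, and the paper neither proves nor claims to prove Conjecture~\ref{c:main} for such $J$). It is fine as motivation, but it is not needed to establish the implication that the paper actually asserts, and including it as if it were a missing step to be completed slightly misrepresents what the paper commits to.
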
 

Conjecture~\ref{c:mainchar} follows from  Conjecture~\ref{c:main} but is slightly weaker (see section \S \ref{section:mainconj}). 
The following results provide the main theoretical justification for our conjecture.
 \begin{theorem}   For $J = R$, we have \[ \sR_p(\chi)_{J, \an} =  \frac{L_p^{(r)}(\chi, 0)}{r! L(\chi, 0) \prod_{\fp \in R_0} (1 - \chi(\fp))}. \]  \end{theorem}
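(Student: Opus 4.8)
The plan is to deduce the $J=R$ identity from the interpolation property of $L_p(\chi\omega,s)$ together with the Mellin-transform description of the Eisenstein cocycle that underlies the construction of $L_p$ in \cite{pcsd}, \cite{cdg}, \cite{spiess}. As in the introduction I suppress the auxiliary smoothing prime $\lambda$: reinstating it multiplies the numerator and the denominator of $\sR_p(\chi)_{R,\an}$ by one and the same Euler-type factor, and multiplies $L_p^{(r)}(\chi,0)$ and $L(\chi,0)$ by that factor as well, so the asserted ratio is unaffected.

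The first step is to translate the cohomological cap products into moment integrals against a single Eisenstein measure. Unwinding the definitions of \S\ref{section:mainconj}, the class $\kappa_\chi\cap\vartheta\in H_r(E_R^*,\Meas(F_R,K))$, paired against the cup product over $\fp\in R$ of the $1$-cocycles attached to the homomorphisms $o_\fp$, produces a measure $\mu_\chi\in\Meas(F_R,K)$ whose total mass is exactly the denominator in \eqref{e:rpandef1}:
\[ c_o\cap(\kappa_\chi\cap\vartheta)\;=\;\mu_\chi\Bigl(\textstyle\prod_{\fp\in R}\cO_{F_\fp}\Bigr)\;=\;\pm\,L(\chi,0)\prod_{\fp\in R_0}(1-\chi(\fp)). \]
Replacing, for every $\fp\in R$, the $1$-cocycle of $o_\fp$ by that of $\ell_\fp=\log_p\circ\Norm_{F_\fp/\Q_p}$ has, after the same unwinding, the effect of inserting the weight $\prod_{\fp\in R}\ell_\fp(x_\fp)$ and restricting each coordinate to its unit group, so that
\[ c_{\ell,R}\cap(\kappa_\chi\cap\vartheta)\;=\;\pm\int_{\prod_{\fp\in R}\cO_{F_\fp}^*}\ \prod_{\fp\in R}\ell_\fp(x_\fp)\ d\mu_\chi(x) \]
(retaining $o_\fp$ in the complementary coordinates gives the principal minors $\sR_p(\chi)_{J,\an}$, and introducing the parameter $t$ gives the characteristic polynomial of Conjecture~\ref{c:mainchar}). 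Finally, the very computation through which the Deligne--Ribet/Cassou-Nogu\`es $L$-function is recovered from $\kappa_\chi$ in the cited papers says that, for $s$ near $0$ in $\Z_p$,
\[ L_p(\chi\omega,s)\;=\;\pm\int_{\prod_{\fp\in R}\cO_{F_\fp}^*}\ \prod_{\fp\in R}\langle\Norm_{F_\fp/\Q_p}(x_\fp)\rangle^{-s}\ d\mu_\chi(x), \]
with the same sign, where for $y\in\Z_p^*$ we write $\langle y\rangle$ for its principal unit part and $\langle y\rangle^{-s}:=\exp(-s\log_p y)$; note $\ell_\fp(x_\fp)=\log_p\langle\Norm_{F_\fp/\Q_p}(x_\fp)\rangle$ for $x_\fp\in\cO_{F_\fp}^*$.

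Now differentiate this formula $r$ times under the integral sign at $s=0$. Since $\frac{d}{ds}\langle\Norm_{F_\fp/\Q_p}(x_\fp)\rangle^{-s}\big|_{s=0}=-\ell_\fp(x_\fp)$, the multinomial theorem gives
\[ \frac{d^{\,r}}{ds^{\,r}}\Big|_{s=0}L_p(\chi\omega,s)\;=\;\pm(-1)^r\sum_{\substack{a\colon R\to\Z_{\ge 0}\\ \sum_{\fp}a_\fp=r}}\binom{r}{a}\int\prod_{\fp\in R}\ell_\fp(x_\fp)^{a_\fp}\,d\mu_\chi(x). \]
The crucial point is that every term with $a_\fq=0$ for some $\fq$ vanishes: its integrand is independent of the coordinate $x_\fq$, and $\int_{\cO_{F_\fq}^*}d\mu_\chi(\,\cdot\,,x_{R\setminus\{\fq\}})=0$ for each fixed choice of the other coordinates. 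This last identity is a partial-mass vanishing in the split direction $\fq$, a consequence of the Frobenius/norm coherence built into $\kappa_\chi$: the mass of $\mu_\chi$ over $\fq\cO_{F_\fq}$ in the $\fq$-coordinate equals $\chi(\fq)$ times its mass over $\cO_{F_\fq}$, and $\chi(\fq)=1$ since $\fq\in R$, so the mass over $\cO_{F_\fq}^*=\cO_{F_\fq}\setminus\fq\cO_{F_\fq}$ is $0$. (This is the same mechanism that yields $\ord_{s=0}L_p(\chi\omega,s)\ge r$, now applied one coordinate at a time.) Hence only the multi-exponent $a\equiv 1$ contributes, with coefficient $\binom{r}{a}=r!$, and combining with the previous step,
\[ \frac{d^{\,r}}{ds^{\,r}}\Big|_{s=0}L_p(\chi\omega,s)\;=\;(-1)^r\,r!\cdot\Bigl(\pm\int\prod_{\fp\in R}\ell_\fp(x_\fp)\,d\mu_\chi(x)\Bigr)\;=\;(-1)^r\,r!\cdot\bigl(\pm\,c_{\ell,R}\cap(\kappa_\chi\cap\vartheta)\bigr). \]
Dividing by $c_o\cap(\kappa_\chi\cap\vartheta)=\pm L(\chi,0)\prod_{\fp\in R_0}(1-\chi(\fp))$ and using $\sR_p(\chi)_{R,\an}=(-1)^r\,\frac{c_{\ell,R}\cap(\kappa_\chi\cap\vartheta)}{c_o\cap(\kappa_\chi\cap\vartheta)}$ from \eqref{e:rpandef1} yields the theorem, once one checks that the signs above cancel; these are pinned down uniformly by the orientation conventions fixing $\vartheta$ and the cocycles of \S\ref{section:mainconj}, so tracking them is routine.

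The step I expect to be the main obstacle is the coordinatewise partial-mass vanishing: it is not formal from the inequality $\ord_{s=0}L_p(\chi\omega,s)\ge r$, but genuinely uses the multiplicative (Hecke-coherent) structure of the Eisenstein cocycle --- precisely the structure that the smoothing prime $\lambda$ is introduced to provide --- so the cleanest route is to establish it in the exact form needed here directly from the construction of $\kappa_\chi$ in \cite{spiess}, rather than quoting the vanishing-order theorem as a black box. Everything else is bookkeeping: the compatibility of the cap products of \S\ref{section:mainconj} with the moment integrals against $\mu_\chi$, and the cancellation of the signs.
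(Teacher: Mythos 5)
Your high-level strategy matches the paper's: both express the cap product $c_{\ell,R}\cap(\kappa_\chi\cap\vartheta)$ and the $r$-th derivative of $L_p(\chi\omega,s)$ at $s=0$ through the same underlying Eisenstein data and then compare leading terms. But the mechanism for establishing the key fact --- that only the ``diagonal'' multi-exponent $a\equiv 1$ survives the differentiation --- is handled differently, and your version of it is left unproved at exactly the point where all the work lies. The paper does not argue via coordinatewise partial-mass vanishing. Instead it works with the extended measure on $F_R\times\cO_{R'}^*$, relates the cohomological pairing to an element of the Iwasawa algebra $\Lambda$ through the homomorphism $\Xi$ (Prop.~4.2 / eq.~(\ref{e:cap2})), and then invokes \cite[Prop.\ 3.6]{ds}: the precise identity $\overline\rho = (-1)^r\,c_{\ell,J}\cap\bigl((\barX^r 1_{\cO_{R'}^*})\cap\vartheta\bigr)$ in $H_{n-1}\bigl(E_R^*, C_c(F_R\times\cO_{R'}^*, K[X]/(X^{r+1}))\bigr)$. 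That single cup-product identity simultaneously encodes (i) the vanishing of all lower Taylor coefficients and (ii) the identification of the $\barX^r$ coefficient with $c_{\ell,J}$ capped against $\kappa_{\chi,\lambda}\cap\vartheta$; no separate per-coordinate vanishing lemma is needed.

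Your proposal has two issues worth flagging. First, the ``partial-mass vanishing in the split direction'' phrased as $\int_{\cO_{F_\fq}^*}d\mu_\chi(\,\cdot\,,x_{R\setminus\{\fq\}})=0$ for each fixed choice of the other coordinates requires a disintegration of $\mu_\chi$, which is not available for a general $p$-adic measure on a product space and would have to be replaced by the correct cohomological statement $\mu_\chi(\cO_{F_\fq}^*\times V)=0$ for all compact open $V$ (or, better, by the cocycle-level identity). As written it is not rigorous. Second, your interpolation formula for $L_p(\chi\omega,s)$ omits the coordinates at $R'=R_0\cup R_1$; the reciprocity weight lives on all of $F_R^*\times\cO_{R'}^*$, and the resulting measure is on $F_R\times\cO_{R'}^*$, not on $F_R$. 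It is true that in the end the $R'$ directions contribute only a constant (because the multinomial term with all $a_\fp\ge 1$ on $R$ and total degree $r$ forces $a_\fq=0$ on $R'$), but this is a conclusion of the vanishing argument, not something you may suppress at the outset. In short, the plan is sound and conceptually parallel to the paper's, but the decisive step is a restatement --- not a proof --- of what the paper obtains from \cite[Prop.\ 3.6]{ds}, and the measure-theoretic bookkeeping is less precise than the cohomological framework the paper uses.
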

 
Hence Conjecture~\ref{c:main} for $J=R$ is equivalent to part (2) of Conjecture~\ref{c:gross}, whence it holds by \cite{dkv}.
We also consider the other extremal case $\#J = 1$.  The following result is proved for $n=2$, but we suspect that it can be shown in general.

 \begin{theorem}  Let $n=2$ and let $J = \{\fp\}$.  Then Conjecture~\ref{c:main} agrees with the conjectural formula for $\sL_{\alg}(\chi)_{\fp, \fp}$ proposed in
 \cite[Conjecture 3.21]{das}.
 \end{theorem}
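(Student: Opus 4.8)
The plan is to unwind the cap-product formula defining $\sR_p(\chi)_{\{\fp\},\an}$ into a ratio of $p$-adic integrals and to recognise that ratio as the analytic quantity which \cite[Conjecture 3.21]{das} predicts equals $\sL_{\alg}(\chi)_{\fp,\fp}$. When $\#J=1$ the $r$-cocycle $c_{\ell,\{\fp\}}$ differs from $c_{o}=c_{o,R}$ only in the factor attached to $\fp$, built from $\ell_\fp=\log_p\circ\Norm_{F_\fp/\Q_p}$ rather than $o_\fp=\ord_\fp$, the factors at the remaining primes of $R$ being $o_\fq$ in both cases. After fixing a Shintani cycle representing the generator $\vartheta\in H_{n+r-1}(E_R^*,\Z)$, the pairings $c_{o}\cap(\kappa_\chi\cap\vartheta)$ and $c_{\ell,\{\fp\}}\cap(\kappa_\chi\cap\vartheta)$ unwind into finite $\chi$-weighted sums of integrals, over $F_R$ against the Shintani cone measures cut out by $\kappa_\chi$, of the weight functions
\[
 \prod_{\fq\in R}\ord_\fq(x_\fq)
 \qquad\text{and}\qquad
 \log_p\!\big(\Norm_{F_\fp/\Q_p}(x_\fp)\big)\prod_{\fq\in R,\,\fq\neq\fp}\ord_\fq(x_\fq)
\]
respectively. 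Writing $\mu_\chi$ for this weighted collection of cone measures, $\sR_p(\chi)_{\{\fp\},\an}$ is then, up to the sign $(-1)^{\#J}$ in \eqref{e:rpandef1}, the ratio of the $\log_p\Norm$-weighted integral to the $\ord$-weighted integral against $\mu_\chi$; by the remark after \eqref{e:rpandef1} the denominator is $\pm\,L(\chi,0)\prod_{\fp\in R_0}(1-\chi(\fp))$, the normalising factor one expects.

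The second step is to identify $\mu_\chi$ with the measure underlying \cite{das}. The Eisenstein cocycle has an explicit Shintani representative (reviewed in \S\ref{section:mainconj}, following \cite{pcsd} and \cite{cdg}) built from a cone decomposition of a fundamental domain for the action of $E_R^*$, together with the $\lambda$-smoothing; capping it with $\vartheta$ turns this into a $\chi$-weighted sum of cone measures on $F_R$. For $n=2$ these are exactly the cone measures from which \cite{das} assembles its $p$-adic invariant, and after integrating out the $\ord$-weighted contributions of the primes of $R$ other than $\fp$ one obtains on $F_\fp^*$ the smoothed measure $\nu$ for which \cite[Conjecture 3.21]{das} predicts $\sL_{\alg}(\chi)_{\fp,\fp}=-\int\log_p\Norm\,d\nu\big/\int\ord_\fp\,d\nu$. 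Granting this identification the theorem follows by comparing the two ratios; since both sides are known to be independent of the choices made (the generator $\vartheta$, the Shintani domain, the prime $\fP_\fp$), the comparison is insensitive to the auxiliary data.

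The main obstacle is exactly the identification in the second step: translating the group-cohomological cap-product formalism used here --- with its fixed generator $\vartheta$, its implicit orientation, and its $\lambda$-smoothing carried at the cochain level --- into the concrete Shintani zeta-function language of \cite{das}, and verifying that the smoothing conventions and the orientation/sign conventions agree, so that the $(-1)^{\#J}$ of \eqref{e:rpandef1} matches the sign in \cite[Conjecture 3.21]{das}. The hypothesis $n=2$ is what makes this tractable: the group $E_R^*$ then has rank $n+r-1\le 3$, the Shintani domains are one-dimensional, and the comparison of the cocycle with the Shintani construction is already essentially contained in \cite{ddp} and \cite{das}. We expect the same argument, with more intricate Shintani combinatorics, to give the comparison for general $n$.
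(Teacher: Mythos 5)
Your proposal has the right high-level shape — reduce the cap product to a ratio of Shintani integrals and then match those integrals with the construction in \cite{das} — but it stops precisely where the paper's real work begins, and the citations you lean on for the key step do not in fact contain it.

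Two concrete gaps. First, you propose to work directly with $\kappa_{\chi,\lambda}$ on $F_R$ and ``integrate out the $\ord$-weighted contributions of the primes of $R$ other than $\fp$.'' That intuition is correct, but it is not a one-line reduction: the paper establishes it as a separate proposition (the ``alternate formulation,'' Proposition~\ref{p:Jvariant}), whose content is that $\sR_p(\chi)_{J,\an}$ can be rewritten via a different cocycle $\omega_{\chi,\lambda}^J$ valued in $\Meas(F_J,K)$, defined over the smaller group $E(\ff)_J$ of totally positive $J$-units congruent to $1$ modulo the conductor $\ff$, with the narrow ray class group $G_\ff$ replacing the $R$-class group. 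That reduction involves passing through the adelic framework, several applications of Shapiro's lemma, the maps $\delta_\fp\colon C_c(F_\fp^*/U_\fp,\Z)\to C_c(F_\fp,\Z)$, and weak approximation; it is not contained in \cite{ddp} or \cite{das}.

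Second, once in the $J$-only setting the theorem reduces to a genuine cocycle computation (Proposition~\ref{p:partgs}): with $n=2$, pick the generator $\vartheta'=[\pi\,|\,\epsilon]-[\epsilon\,|\,\pi]$ of $H_2(E(\ff)_J,\Z)$ where $\epsilon$ generates $E(\ff)$ and $\pi$ generates $\fp^e$, choose the inhomogeneous representative $z_{1_{\pi\cO_\fp},\,g}$ of $c_g$, and evaluate $c_g\cap(\omega_{\ff,\fb,\lambda}^J\cap\vartheta')$ term by term. This produces exactly the three factors in $\cU_\fp(\fb,\lambda,\cD)$ — the multiplicative integral $\mint_{\cO_\fp-\pi\cO_\fp}x\,d\nu$, the power $\pi^{\nu(\cO_\fp)}$, and the unit $\epsilon_{\fb,\lambda,\cD,\pi}$ — applied under the arbitrary homomorphism $g$. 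The last of these requires the identity $g(\epsilon_{\fb,\lambda,\cD,\pi})=-\nu(1,\pi)(\pi\cO_\fp)\,g(\epsilon)$, which is where the explicit cone intersection $\epsilon^n\cD\cap\pi^{-1}\cD$ and the inhomogeneous cocycle relation for $\nu(1,\cdot)$ enter. Your proposal acknowledges that this identification is ``the main obstacle'' and then says it is ``essentially contained in \cite{ddp} and \cite{das}'' — but it is not; it is the new content of the proof. As written, the proposal defers the entire theorem to a comparison you do not carry out and attribute to the wrong sources.
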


For $1 < \# J < r$, Conjecture~\ref{c:main} is a new generalization of Gross's Conjecture.

\section{The Eisenstein Cocycle} \label{s:eis}

To define the Eisenstein cocycle, we first fix an ordering of the real places of $F$, yielding an embedding $F \subset \R^n$.
The group $(\R^*)^n$, and hence $F^* \subset (\R^*)^n$, acts on $\R^n$ by componentwise multiplication.
Given linearly independent vectors $x_1, \dotsc, x_m \in (\R^{>0})^n$, we define the simplicial cone
 \begin{equation} \label{e:conedef}
  C(x_1, \dotsc, x_m) = \{ t_1 x_1 + \cdots + t_m x_m: t_i \in \R^{>0} \} \subset (\R^{>0})^n. 
  \end{equation}
The  vector $Q = (1,0,0, \dots, 0)$  has the property that
its ray (i.e.\ its set of $\R^{>0}$ multiples) is preserved by the action of $(\R^{>0})^n$.  We  define $C^*(x_1, \dotsc, x_n)$ to be the union of 
$C(x_1, \dotsc, x_n)$ with the boundary faces that are brought into the interior of the cone by a small perturbation by $Q$, i.e.~the set whose characteristic function is defined by
\[ 1_{C^*(x_1, \dotsc, x_n)}(x) := \lim_{h \rightarrow 0^+} 1_{C(x_1, \dots, x_n)}(x + hQ). \]

Let $\cO_{F, R}$ denote the ring of $R$-integers of $F$.  For any fractional ideal $\fb \subset F$ relatively prime to $S$, we let
$\fb_{R} = \fb \otimes_{\cO_F} \cO_{F, R}$ denote the $\cO_{F, R}$-module generated by $\fb$.
Let \[ U \subset F_R := \prod_{\fp \in R} F_{\fp} \] be a compact open subset.
Let $C$ be any union of simplicial cones in $ (\R^{>0})^n$.
For $s \in \C$ with $\Real(s) > 1$, consider the Shintani $L$-function
\begin{equation} \label{e:shinL}
 L(C, \chi, \fb, U, s) := \sum_{\stack{\xi \in C \cap \fb_{R}^{-1}, \ \xi \in U}{(\xi, S \backslash R) = 1}} \frac{\chi((\xi))}{\N\xi^s}. 
 \end{equation}
Here $\chi((\xi))$ denotes $\chi$ evaluated on the image of the principal ideal $(\xi)$ under the Artin reciprocity map for the extension $H/F$.
The set $\fb_{R} \cap U$ can be written as the disjoint union of translates of fractional ideals of $F$, which are lattices in $\R^n$.
Shintani proved that the $L$-function defined in (\ref{e:shinL}) has a meromorphic continuation to $\C$, and that its values at nonpositive integers lie in the cyclotomic field $k$ generated by the values of $\chi$.
Furthermore, for $\chi$, $C$, $\fb,$ and $s$ fixed, it is clear that the values $L(C, \chi, \fb, U, s)$ form a distribution on $F_R$ in the sense that
for disjoint compact opens $U_1, U_2 \subset F_R$, we have \[ L(C, \chi, \fb, U_1 \cup U_2, s) =  L(C, \chi, \fb, U_1 , s) +  L(C, \chi, \fb, U_2, s). \]
The space of $k$-valued distributions on $F_R$, denoted $\Dist(F_R, k)$, has an action of $F_R^*$ given by
$(x \cdot \mu)(U) = \mu(x^{-1} U).$   As in the introduction let $E_R^*$ denote the group of totally positive units in $\cO_R$ which we view as a subgroup of $(\R^{>0})^n$.

The following proposition follows directly from \cite[Theorem 1.6]{cdg}.
\begin{prop} \label{p:cocycle}  Let $x_1, \dotsc, x_n \in E_R^*$ and let $x$ denote the $n \times n$ matrix whose columns are the images of the $x_i$ in $(\R^{>0})^n$. 
 For a compact open subset $U \subset F_R$  let 
\[ \mu_{\chi, \fb}(x_1, \dotsc, x_n)(U) := \sgn(x) L( C^*(x_1, \dotsc, x_n), \chi, \fb, U, 0), \]
where $\sgn(x) =  \sign(\det(x))$ if $\det(x)\ne 0$ and $\sgn(x) =0$ otherwise.  Then $\mu_{\chi, \fb}$ is a homogeneous $(n-1)$-cocycle yielding a class $[\mu_{\chi, \fb}] \in H^{n-1}(E_R^*, \Dist(F_R, k)).$
\end{prop}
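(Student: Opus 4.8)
The plan is to deduce the proposition directly from \cite[Theorem 1.6]{cdg}, so the main task is a translation between the setup in \emph{loc.\ cit.} and the notation used here, together with a check that the $Q$-perturbed cones $C^*(x_1,\dots,x_n)$ are precisely the cones to which that theorem applies. First I would recall the cocycle relation that must be verified: for $x_0, x_1,\dots,x_n \in E_R^*$ one needs
\[
\sum_{i=0}^{n} (-1)^i \mu_{\chi,\fb}(x_0, \dots, \widehat{x_i}, \dots, x_n) = 0
\]
in $\Dist(F_R, k)$, together with the homogeneity property $\mu_{\chi,\fb}(\varepsilon x_0, \dots, \varepsilon x_n) = \varepsilon \cdot \mu_{\chi,\fb}(x_0,\dots,x_n)$ for $\varepsilon \in E_R^*$. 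The homogeneity is essentially formal: replacing each $x_i$ by $\varepsilon x_i$ scales the sign $\sgn(x)$ by $\sgn(\varepsilon) = 1$ (as $\varepsilon$ is totally positive), translates the summation lattice condition $\xi \in \fb_R^{-1}$ compatibly, and transforms $C^*(\varepsilon x_1,\dots,\varepsilon x_n) = \varepsilon C^*(x_1,\dots,x_n)$ because multiplication by a totally positive unit commutes with the $Q$-perturbation (the ray through $Q$ is preserved by $(\R^{>0})^n$, which is the key property singled out in the definition of $C^*$). Under the change of variables $\xi \mapsto \varepsilon^{-1}\xi$ the Shintani $L$-value $L(C^*, \chi, \fb, U, 0)$ is carried to $L(\varepsilon C^*, \chi, \fb, \varepsilon U, 0)$, which matches the $F_R^*$-action $(x\cdot\mu)(U) = \mu(x^{-1}U)$ on $\Dist(F_R,k)$.

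The substantive point is the cocycle relation, and here I would invoke \cite[Theorem 1.6]{cdg} rather than reprove it. That theorem provides a homogeneous $(n-1)$-cocycle on $\GL_n$ (or on a suitable arithmetic subgroup) valued in a space of distributions built from Shintani-type sums over cones, using exactly the perturbation device $1_{C^*} = \lim_{h\to 0^+} 1_C(\cdot + hQ)$ to make the boundary-face contributions cancel in the additive relation among the cones $C^*(x_0,\dots,\widehat{x_i},\dots,x_n)$. Concretely, the alternating sum of the characteristic functions $\sgn(x_0,\dots,\widehat{x_i},\dots,x_n)\,1_{C^*(x_0,\dots,\widehat{x_i},\dots,x_n)}$ vanishes identically as a function on $(\R^{>0})^n$ — this is the combinatorial heart of the construction, and it is the content of \cite{cdg} — whence the corresponding alternating sum of Shintani $L$-values vanishes termwise after restricting to any compact open $U$ and imposing the coprimality condition $(\xi, S\setminus R) = 1$. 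I would spell out that the coprimality condition and the character value $\chi((\xi))$ are insensitive to the perturbation (they depend only on $\xi$, not on which cone contains it), so they pass through the identity of characteristic functions without obstruction.

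The remaining step is bookkeeping: one must confirm that $L(C^*,\chi,\fb,U,0)$ is well-defined, i.e.\ that the series (\ref{e:shinL}) indeed has meromorphic continuation and is finite at $s=0$ for each union of simplicial cones — this is Shintani's theorem, already recalled in the text — and that the resulting values depend additively on $U$, which is the distribution property stated just before the proposition. The main obstacle, if any, is purely expository: ensuring that the group $E_R^*$ (totally positive $R$-units, of rank $n+r-1$) acting via its image in $(\R^{>0})^n$ fits the hypotheses of \cite[Theorem 1.6]{cdg}, which is stated for a general setup encompassing this case; since only the $n$-dimensional cone combinatorics and the lattice $\fb_R^{-1}$ enter, and $E_R^*$ acts through $(\R^{>0})^n$ preserving both, no difficulty arises. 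I would close by remarking that the class $[\mu_{\chi,\fb}] \in H^{n-1}(E_R^*,\Dist(F_R,k))$ is independent of the auxiliary choices made in \cite{cdg} up to the usual coboundary ambiguity, which is all that is asserted.
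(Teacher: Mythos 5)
Your proposal is correct and follows essentially the same route as the paper, which simply cites \cite[Theorem 1.6]{cdg}; the additional verifications you carry out (that $C^*(\varepsilon x_1,\dots,\varepsilon x_n) = \varepsilon C^*(x_1,\dots,x_n)$ because $\varepsilon$ preserves the ray through $Q$, that $\chi((\varepsilon))=1$ and $\N(\varepsilon)^0=1$ so the change of variables matches the $F_R^*$-action $(\varepsilon\cdot\mu)(U)=\mu(\varepsilon^{-1}U)$, and that the combinatorial cancellation of perturbed-cone characteristic functions is the content of \cite{cdg}) are exactly the bookkeeping implicit in the paper's one-line deduction. The only blemish is a minor indexing slip: homogeneity should be stated for the $n$-tuple $(x_1,\dots,x_n)$, not an $(n+1)$-tuple.
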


In order to achieve integral distributions, we introduce a smoothing operation using an auxiliary prime ideal $\lambda$ of $F$.  We assume that $\lambda$ is cyclic in the sense that $\cO_F/\lambda \cong \Z/\l \Z$ for a prime number $ \l \in \Z,$ and we assume that $\l \ge n+2$.  We also assume that no primes in $S$ have residue characteristic equal to $\l$ (in particular $\l \neq p$).  We then define the smoothed Shintani $L$-function
\[ L_\lambda(C, \chi, \fb, U, s) :=  L(C, \chi, \fb \lambda^{-1}, U, s) - \chi(\lambda)\l^{1-s}  L(C, \chi, \fb , U, s). \]
Using ``Cassou--Nogu\`es' trick", it is shown in \cite{cdg} (see also \cite{ds}) that if the generators of the cones comprising $C$ can be chosen to be units at the primes above $\l$, then $L_\lambda(C, \chi, \fb, U, 0) \in \cO_k$.  
  For $x_1, \dotsc, x_n \in E_R^*$ and $U \subset F_R$ an open compact subset let 
\[ \mu_{\chi, \fb, \lambda}(x_1, \dotsc, x_n)(U) := \sgn(x) L_\lambda( C^*(x_1, \dotsc, x_n), \chi, \fb, U, 0). \]
Let $\fP$ be the prime of $k$ above $p$  corresponding to $k \subset \overline{\Q} \subset \C_p$, where the second embedding is the one fixed at the outset of the paper, and let $K=k_{\fP}$. Since $\mu_{\chi, \fb, \lambda}$ is integral, it is in particular $\fP$-adically bounded with values in $\cO_K$ and can therefore be viewed as a $K$-valued measure on $F_R$, i.e.\ as having values in \[ \Meas(F_R, K) := \Hom(C_c(F_R, \Z), \cO_K)\otimes_{\cO_K} K. \] We define 
\begin{equation}
\kappa_{\chi, \fb, \lambda} := [\mu_{\chi, \fb, \lambda}]\in H^{n-1}(E_R^*, \Meas(F_R, K))
\label{e:kapchipart}
\end{equation}
and the Eisenstein cocycle associated to $\chi$ and $\lambda$ by
\begin{equation}
\kappa_{\chi, \lambda}  = \sum_{i=1}^h \chi(\fb_i) \kappa_{\chi, \fb_i, \lambda} \in H^{n-1}(E_R^*, \Meas(F_R, K)).
 \label{e:kapchi}
\end{equation}
Here $\{\fb_1, \dotsc, \fb_h\}$ is a set of integral ideals representing the narrow class group of $\cO_{F,R}$ (i.e.\ the group of fractional ideals of $\cO_{F,R}$ modulo the group of fractional principal ideals generated by totally positive elements of $F$).

We conclude this section by recalling a cap product pairing that can be applied to the Eisenstein cocycle $\kappa_{\chi, \lambda}$.
There is a canonical integration pairing
\begin{align}
\begin{split}
C_c(F_R, K)\times \Meas(F_R, K)\,&\lra\, K, \\
 (f, \mu) & \longmapsto \int_{F_R} f(t) d \mu(t):= \lim_{|| \cV || \rightarrow 0} \sum_{V \in \cV} f(t_V) \mu(V) 
 \label{e:intdef}
\end{split}
\end{align}
where the limit is taken over uniformly finer covers $\cV$ of the support of $f$ by open compacts $V$, and $t_V \in V$ is any element. More generally if $A$ is a locally profinite $K$-algebra (i.e.\ an Iwasawa algebra) we have an $F_R$-equivariant integration pairing 
\begin{align} 
\begin{split}
C_c(F_R, A)\times \Meas(F_R, K)\, &\lra\,A, \\
 (f, \mu) &\longmapsto \int_{F_R} f(t) d \mu(t) \label{e:intdef2}
\end{split}
\end{align}
(see e.g.\ \cite[\S 2]{ds}) where the $F_R^*$-action on $C_c(F_R, A)$ is given by $(x\cdot f)(y) = f(x^{-1}y)$.  For each non-negative integer $m$, the pairing (\ref{e:intdef2}) induces a cap-product pairing
\begin{equation}
\cap: H^{m}(E_R^*, \Meas(F_R, K)) \times H_{m}(E_R^*, C_c(F_R, A)) \,\lra\, A.
\label{e:cap1}
\end{equation}

\section{Conjecture}
\label{section:mainconj}

\subsection{Statement}
We recall the following definition from \cite{spiesshilb, ds}.  Let $\fp \in R$, let $g\colon F_\fp^* \longrightarrow K$ be a continuous homomorphism and let $f\in C_c(F_{\fp}, \Z)$, i.e.\ $f\colon F_{\fp} \lra \Z$ is a locally constant function with compact support. For $a\in F_{\fp}^*$ let $af\in C_c(F_{\fp}, \Z)$ be given by $(af)(x) = f(a^{-1}x)$. Since $(1-a) \cdot f= f-af$ vanishes at $0\in F_{\fp}$, the function \begin{align*}
F_{\fp}^* & \longrightarrow K \\
 x& \longmapsto (f(x)-f(a^{-1}x)) \cdot g(x) \end{align*}
  extends continuously to $F_{\fp}$ hence can be viewed as a function \[ (f-af) \cdot g \colon F_{\fp}\to K. \] The map
 \[ z_{f, g}\colon F_\fp^* \longrightarrow C_c(F_\fp, K) \] given by
 \begin{align} 
 \label{e:zgdef}
  z_{f,g}(a) & = `` (1 - a)(g \cdot f) "  \nonumber \\ & :=  (af) \cdot (g-ag)+ (f-af)\cdot g  \\
  &  = (af) \cdot g(a)  + (f-af) \cdot g \nonumber
 \end{align}
 is an inhomogeneous 1-cocycle. Its class $[z_{f,g}] \in H^1(F_\fp^*, C_c(F_\fp, K))$ depends only on the value of $f$ at $0$, i.e. if $f,f'\in C_c(F_{\fp}, \Z)$ satisfy $f(0)=f'(0)$ then $[z_{f,g}] = [z_{f',g}]$. In particular if we choose $f$ so that $f(0)=1$, e.g.\ $f= 1_{\cO_{\fp}}$ then 
 \[
 c_g := [z_{f,g}] \in H^1(F_\fp^*, C_c(F_\fp, K))
 \]
depends only on $g$. Note that the expression $ (1 - a)(g \cdot f)$ has no literal meaning since the function $g$ does not necessarily extend to a continuous function 
on $F_\fp$ (and for this reason, the cocycle $z_g$ is not necessarily a coboundary); nevertheless this expression provides the intuition for the definition of $z_g$ given by the right side of (\ref{e:zgdef}).

The construction above in particular applies
to the homomorphisms $o_\fp, \l_\fp$ from (\ref{e:olpdef}) and we thus obtain classes
$c_{o_\fp}, c_{\l_{\fp}} \in H^1(F_\fp^*, C_c(F_\fp, K))$  for each $\fp \in R$. 

Recall that $r = r_\chi = \#R$ and that $E_R^*$ denotes the rank $n + r - 1$
group of totally positive $R$-units in $F$. 
 As in (\ref{e:vtdef}), let $\vartheta \in H_{n+r-1}(E_R^*, \Z) \cong \Z$ be a generator (this is well-defined up to sign).  Cap-product with the Eisenstein cocycle yields a class
 \[ \kappa_{\chi, \lambda} \cap \vartheta \in H_r(E_R^*, \Meas(F_R, K)). \]
Label the elements of $R$ by $\fp_1, \fp_2, \dots, \fp_r$ and let $J\subset R$. Define classes 
\[ c_{o}, c_{\l, J} \in H^r( F_R^*, C_c(F_R, K) ) \] by 
\begin{align*}
c_{o} &= c_{o_{\fp_1}}  \cup \cdots \cup c_{o_{\fp_r}}\\
c_{\l, J} &= c_{g_1} \cup \cdots \cup c_{g_r}
\end{align*}
where 
\[
g_i= \left\{\begin{array}{cc} \l_{\fp_i} & \mbox{ if $i\in J$,}\\ o_{\fp_i}& \mbox{if $i\not\in J$.}\end{array}\right.
\]
Here the cup-product is induced by the canonical map
\[
C_c(F_{\fp_1}, K) \otimes \cdots \otimes C_c(F_{\fp_r}, K) \lra C_c(F_R,K)
\]
that sends a  tensor $f_1\otimes \ldots \otimes f_r$ to the function 
\begin{align*}
F_R = \prod_{i=1}^r F_{\fp_i} &\lra K \\
(x_i)_{i=1,\ldots, r}&\longmapsto \prod_{i=1}^r f_i(x_i).
\end{align*}
Define a constant 
\begin{equation} \label{e:rpandef}
 \sR_p(\chi)_{J, \an} := (-1)^{\# J}\, \frac{c_{\l, J} \cap (\kappa_{\chi, \lambda} \cap \vartheta)
 }{c_{o} \cap (\kappa_{\chi, \lambda} \cap \vartheta)} \in K.
 \end{equation}
Here the first cap-product of the numerator and denominator is the pairing (\ref{e:cap1})
for $m=r$ and $A=K$.  We will show in Proposition~\ref{p:denom} below that 
the denominator of (\ref{e:rpandef}) is non-zero and in 
Proposition~\ref{p:kappa} that the constant $\sR_p(\chi)_{J, \an}$ is independent of the auxiliary prime $\lambda$.

We propose:
 \begin{conjecture}  \label{c:main2} For each subset $J \subset R$, we have $\sR_p(\chi)_J = \sR_p(\chi)_{J, \an}$.
 \end{conjecture}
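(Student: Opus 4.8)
The plan is to upgrade Conjecture~\ref{c:main2} to an exact matrix identity and then read off every principal minor at once. Fix a prime $\fP_\fq \mid \fq$ of $H$ for each $\fq\in R$; this simultaneously pins down the off-diagonal entries $\sL_{\alg}(\chi)_{\fp,\fq}$, thereby breaking the diagonal-torus ambiguity discussed after the definition of $\sM_p(\chi)$, and should pin down a corresponding ``localization'' datum on the analytic side. The target is an honest equality of $r\times r$ matrices $\sM_p(\chi)=\sM_p(\chi)_{\an}$, where $\sM_p(\chi)_{\an}$ is extracted from $\kappa_{\chi,\lambda}\cap\vartheta$ by the cap-product recipe defining $\sR_p(\chi)_{J,\an}$ but keeping track of the individual slots $\fp_1,\dots,\fp_r$. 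Conjecture~\ref{c:main2} for an arbitrary $J$ then follows by passing to the $J\times J$ minor, once one records --- exactly as in the computation behind Conjecture~\ref{c:mainchar}, using that $g\mapsto c_g$ is $K$-linear so that $c_{\ell,J}$ is the cup product of the classes $c_{\ell_\fp}$ over $\fp\in J$ with the $c_{o_\fp}$ over $\fp\notin J$ --- that $(-1)^{\#J}\,c_{\ell,J}\cap(\kappa_{\chi,\lambda}\cap\vartheta)$, divided by $c_o\cap(\kappa_{\chi,\lambda}\cap\vartheta)$, is precisely that minor of $\sM_p(\chi)_{\an}$. Here Proposition~\ref{p:denom} is used to identify the denominator with $L(\chi,0)\prod_{\fp\in R_0}(1-\chi(\fp))$ up to sign, and Proposition~\ref{p:kappa} to discard the auxiliary prime $\lambda$.

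First I would carry out the cap-product computation at the chain level, in the formalism of \cite{ds} and \cite{cdg}. Represent $\kappa_{\chi,\lambda}\cap\vartheta$ by an explicit $\Meas(F_R,K)$-valued $r$-cycle supported on a fundamental domain for the $E_R^*$-action on a complex built from the Shintani cones of Proposition~\ref{p:cocycle}, and represent the $c_{o_\fp}$, $c_{\ell_\fp}$, hence $c_o$ and $c_{\ell,J}$, by the inhomogeneous $1$-cocycles $z_{f,g}$ of (\ref{e:zgdef}) with $f(0)=1$. Pairing the two sides writes the numerator as a sum of integrals over $F_R$, against the smoothed Shintani measures, of functions of the shape $\prod_{\fp\in J}(\ell_\fp\text{-twist})\cdot\prod_{\fp\notin J}(o_\fp\text{-twist})$; the combinatorics of the Shintani decomposition should make this sum telescope to a single pairing of $\kappa_\chi$ against the product of the homomorphisms $o_\fp$ and $\ell_\fp$ of (\ref{e:olpdef}) on $F_R^*$. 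This is the Eisenstein-cocycle avatar of the step in Gross--Stark theory passing from derivatives of $p$-adic $L$-functions to $p$-units, and for $J=R$ it should recover the first Theorem stated above (the case $J=R$).

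The crux is to identify that pairing with the arithmetic of the units $u_{\fp,\chi}$, i.e.\ with $\sL_{\alg}(\chi)_{\fp,\fq}=-\ell_\fq(u_{\fp,\chi})/o_\fp(u_{\fp,\chi})$. For $J=R$ this is the theorem of \cite{dkv}: the Eisenstein cocycle, read as a group-ring-valued Stickelberger/Shintani element, is matched --- via the Ritter--Weiss module and the Kakde--Wiles Equivariant Main Conjecture --- with the $p$-adic regulator $\det\sM_p(\chi)$ of $p$-units. To reach $1<\#J<r$ one must refine this comparison so that it records the whole matrix $\sM_p(\chi)$ (for the chosen primes $\fP_\fq$), not just its determinant: equivalently, one wants the class of the relevant Selmer/transfer module over a suitable multivariable Iwasawa algebra attached to the local units at the primes of $R$, together with its localization maps playing the roles of $o_\fp$ and $\ell_\fp$, rather than merely its characteristic ideal. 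Concretely, I would produce a presentation of the localized Selmer module whose presentation matrix is $\sM_p(\chi)$ up to the choices already fixed, and feed it into the chain-level computation above.

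The main obstacle is precisely this refinement. The Main Conjecture / ETNC input used in \cite{dkv} is a statement about Fitting ideals and so sees only the determinant, whereas the $J$-cocycles see the principal $J$-minors, and extracting those requires knowing the Selmer module finely enough to read sub-determinants off a presentation. In the two extremal regimes this is avoidable: $J=R$ needs only the determinant (the $J=R$ Theorem together with \cite{dkv}), and for $\#J=1$ with $n=2$ the single entry can be approached by the more explicit Shintani methods behind \cite[Conjecture 3.21]{das}. But for $1<\#J<r$ one genuinely needs a ``matrix, not merely determinant'' form of the Iwasawa Main Conjecture for these Selmer modules, which is not currently available. A parallel route of comparable difficulty would be to construct a $p$-adic $L$-function in $r$ independent cyclotomic variables, one per prime of $R$, interpolating the values $L_S(\chi\omega^{\vec k},\,\cdot\,)$, and prove its leading-term formula. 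Absent either of these, Conjecture~\ref{c:main2} for general $J$ stays, as stated, a conjecture.
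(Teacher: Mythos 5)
This statement is a \emph{conjecture}, and the paper does not claim to prove it: it only establishes the case $J=R$ (Theorem~\ref{t:jr}, which identifies $\sR_p(\chi)_{R,\an}$ with the leading term of $L_p$, so that the result of \cite{dkv} gives Conjecture~\ref{c:main2} for $J=R$), and shows for $n=2$, $\#J=1$ that the conjecture is consistent with \cite[Conjecture 3.21]{das} (Theorem~\ref{t:jep}, via Proposition~\ref{p:Jvariant}). Your text, read carefully, is not a proof either --- you concede in your last sentence that for $1<\#J<r$ the statement ``stays, as stated, a conjecture.'' So on the central question there is no disagreement with the paper: neither you nor the authors prove the general case, and your identification of the obstruction (the Iwasawa-theoretic input of \cite{dkv} is a Fitting-ideal/determinant statement, while the $J$-minors require a presentation-level description of the Selmer module) is a sound and accurate diagnosis.

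One structural point in your plan deserves a caveat. You propose to ``upgrade'' the conjecture to an exact matrix identity $\sM_p(\chi)=\sM_p(\chi)_{\an}$ after fixing primes $\fP_\fq$ above each $\fq\in R$, and then read off every principal $J$-minor. But the analytic side as defined in the paper does not see the choices $\fP_\fq$: the classes $c_{o_\fp}$, $c_{\l_\fp}$, $\kappa_{\chi,\lambda}$ and $\vartheta$ are all intrinsic to $F$, so the cap-product recipe produces, for each $J$, a single choice-independent scalar. That is precisely why the paper conjectures only the principal minors and the characteristic polynomial, and notes after the definition of $\sM_p(\chi)$ that these are the quantities independent of the $\fP_\fq$. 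To make sense of an entrywise matrix identity one would first have to build a genuinely new object on the analytic side carrying the same $\Gal(H/F)$-torsor of ambiguities as the arithmetic matrix; your phrase ``should pin down a corresponding localization datum'' elides exactly this construction. Your reduction of the minor version to the matrix version, and your observation that $c_{to+\l}=\sum_k t^k\sum_{\#J=r-k}c_{\l,J}$ by multilinearity of $g\mapsto c_g$ and of the cup product, does match the paper's derivation of Conjecture~\ref{c:main2char} from Conjecture~\ref{c:main2}; but you are running that implication in the wrong direction if the goal is to prove the minor statement. Finally, you do not mention the paper's well-posedness checks (Proposition~\ref{p:denom} for nonvanishing of the denominator, Proposition~\ref{p:kappa} for $\lambda$-independence, Proposition~\ref{p:Jvariant} for the conductor-$\ff$ reformulation), which are the actual proved content of the section surrounding this conjecture.
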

For $t\in K$ we define the class $c_{to+\l} \in H^r(F_\fp^*, C_c(F_\fp, K))$ by 
\[
c_{to+\l} = c_{(to_{\fp_1} + \l_{\fp_1})}  \cup \cdots \cup c_{(to_{\fp_r} + \l_{\fp_r})}
\]
and propose
\begin{conjecture}  \label{c:main2char} The characteristic polynomial of Gross' regulator matrix is given by
\[
\det(t \cdot 1_r - \sM_p(\chi)) = \frac{c_{to+\l} \cap (\kappa_{\chi, \lambda} \cap \vartheta)
 }{c_{o} \cap (\kappa_{\chi, \lambda} \cap \vartheta)}.
\]
 \end{conjecture}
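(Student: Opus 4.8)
The plan is to deduce Conjecture~\ref{c:main2char} from Conjecture~\ref{c:main2} by a purely formal manipulation of cup and cap products, and then to observe that this deduction is not reversible, so that Conjecture~\ref{c:main2char} is strictly weaker than Conjecture~\ref{c:main2}.

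First I would record that the assignment $g \mapsto c_g$ is $K$-linear in the continuous homomorphism $g\colon F_\fp^* \to K$. Fix $f \in C_c(F_\fp, \Z)$ with $f(0) = 1$, say $f = 1_{\cO_\fp}$. The inhomogeneous $1$-cocycle $z_{f,g}$ of (\ref{e:zgdef}) is visibly $K$-linear in $g$, since both of its terms $(af)\cdot g(a)$ and $(f-af)\cdot g$ are. Hence $z_{f,\,sg+tg'} = s\,z_{f,g} + t\,z_{f,g'}$ for scalars $s,t$, and because $c_g = [z_{f,g}]$ is independent of the choice of $f$ subject to $f(0) = 1$, passing to cohomology gives $c_{sg+tg'} = s\,c_g + t\,c_{g'}$ in $H^1(F_\fp^*, C_c(F_\fp, K))$. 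In particular $c_{(to_{\fp_i} + \l_{\fp_i})} = t\,c_{o_{\fp_i}} + c_{\l_{\fp_i}}$ for each $i = 1, \dots, r$.

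Next I would expand the $r$-fold external cup product. Since the cup product $H^1(F_{\fp_1}^*,-) \times \cdots \times H^1(F_{\fp_r}^*,-) \to H^r(F_R^*,-)$ attached to $f_1 \otimes \cdots \otimes f_r \mapsto \bigl((x_i)_i \mapsto \prod_i f_i(x_i)\bigr)$ is multilinear, I obtain
\[
c_{to+\l} \;=\; \bigcup_{i=1}^{r}\bigl(t\,c_{o_{\fp_i}} + c_{\l_{\fp_i}}\bigr) \;=\; \sum_{J \subset R} t^{\,r-\#J}\,c_{\l, J},
\]
where $c_{\l, J}$ is exactly the class appearing in (\ref{e:rpandef}), and the $J = \emptyset$ term is $c_o$. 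Capping against $\kappa_{\chi,\lambda}\cap\vartheta \in H_r(E_R^*, \Meas(F_R, K))$ after restriction from $F_R^*$ to $E_R^*$, dividing by $c_o \cap (\kappa_{\chi,\lambda}\cap\vartheta)$ — which is nonzero by Proposition~\ref{p:denom} — and invoking the bilinearity of the pairing (\ref{e:cap1}) together with the definition (\ref{e:rpandef}), I get
\[
\frac{c_{to+\l} \cap (\kappa_{\chi,\lambda}\cap\vartheta)}{c_o \cap (\kappa_{\chi,\lambda}\cap\vartheta)} \;=\; \sum_{J \subset R} t^{\,r-\#J}\,(-1)^{\#J}\,\sR_p(\chi)_{J,\an}.
\]
On the other hand, the expansion of $\det(t\cdot\mathbbm{1}_r - \sM_p(\chi))$ into principal minors recalled in the introduction, rewritten as a sum over all subsets $J \subset R$, reads $\sum_{J\subset R} t^{\,r-\#J}(-1)^{\#J}\,\sR_p(\chi)_J$. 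Comparing the two displays term by term, Conjecture~\ref{c:main2} — the equalities $\sR_p(\chi)_J = \sR_p(\chi)_{J,\an}$ for every $J \subset R$ — yields Conjecture~\ref{c:main2char}.

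Finally I would note that the converse fails: Conjecture~\ref{c:main2char} is only an identity of degree-$r$ polynomials in the formal variable $t$, so it records merely the aggregate equalities $\sum_{\#J = j}\sR_p(\chi)_J = \sum_{\#J = j}\sR_p(\chi)_{J,\an}$ for $0 \le j \le r$, not the individual ones; hence it is strictly weaker. There is no genuine analytic input in this argument — the real difficulty is entirely contained in Conjecture~\ref{c:main2}, which is assumed — so the only points demanding care are bookkeeping ones: aligning the sign $(-1)^{\#J}$ of (\ref{e:rpandef}) with the sign $(-1)^{r-k}$ in the characteristic-polynomial expansion, verifying that the linearity of $g\mapsto c_g$ genuinely descends to cohomology (i.e.\ that $f(0)=1$ may be imposed uniformly across all the homomorphisms involved), and invoking Proposition~\ref{p:kappa} so that the right-hand side of Conjecture~\ref{c:main2char} is known to be independent of $\lambda$ and the statement is well-posed.
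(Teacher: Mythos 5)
Your argument is correct and is essentially the paper's own deduction of Conjecture~\ref{c:main2char} from Conjecture~\ref{c:main2}: the paper records precisely the same two expansions of $c_{to+\l}$ and of $\det(t\cdot\mathbbm{1}_r - \sM_p(\chi))$ into sums over subsets $J\subset R$ and compares coefficients, also noting that the resulting statement is slightly weaker. You have simply filled in the routine verifications (linearity of $g\mapsto c_g$ in $g$, multilinearity of the external cup product, bilinearity of the cap pairing~(\ref{e:cap1}), and well-posedness via Proposition~\ref{p:kappa}) that the paper leaves implicit.
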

Conjecture \ref{c:main2char} follows from Conjecture \ref{c:main2} since we have
\begin{align*}
c_{to+\l} &=  \sum_{k=0}^r t^k \sum_{\stack{J\subset R}{\#J=r-k}} c_{\l, J},\\
\det(t \cdot \mathbbm{1}_r - \sM_p(\chi)) & =  \sum_{k=0}^r t^k (-1)^{r-k} \sum_{\stack{J\subset R}{\#J=r-k}} \sR_p(\chi)_J.
\end{align*}

\begin{remark}
\rm Instead of considering only the homomorphisms $\l_{\fp}$ in the definition of in Gross' regulator matrix and in the nominator of (\ref{e:rpandef}) one may consider arbitrary continuous homomorphisms $\psi_1: F_{\fp_1}^*\to E, \ldots, \psi_r: F_{\fp_r}^*\to K$. So we conjecture that more generally we have 
\[
\det\left(\left(- \frac{\psi_j(u_{\fp_i, \chi})}{o_{\fp_i}(u_{\fp_i, \chi})}\right)_{i,j=1, \ldots, r}\right) = (-1)^r\, \frac{c_{\psi_{\fp}, \fp\in R} \cap (\kappa_{\chi, \lambda} \cap \vartheta)
 }{c_{o} \cap (\kappa_{\chi, \lambda} \cap \vartheta)}
 \]
where $c_{\psi_{\fp}, \fp\in R} = c_{\psi_1} \cup \cdots \cup c_{\psi_r}$.
\end{remark}

\subsection{Well-formedness of the conjecture}

The following proposition shows that the denominator of (\ref{e:rpandef}) is non-zero.  The result was essentially proved in \cite{ds}, but we explain here how to relate our present notation to the setting of {\em loc.\ cit.}

\begin{prop} 
\label{p:denom}
With the correct choice of sign for $\vartheta$ we have
\[
c_{o} \cap (\kappa_{\chi, \lambda} \cap \vartheta) = (-1)^{r(n-1)} (1 - \chi(\lambda)\l)L(\chi, 0) \prod_{\fp \in R_0} (1 - \chi(\fp)).
\]
\end{prop}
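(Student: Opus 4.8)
The plan is to unwind the cap product $c_o \cap (\kappa_{\chi,\lambda}\cap\vartheta)$ into an explicit sum of values of the smoothed Shintani $L$-functions and then identify that sum with the smoothed partial zeta function, which is known to equal the right side up to the sign bookkeeping. First I would recall that $\kappa_{\chi,\lambda}\cap\vartheta \in H_r(E_R^*,\Meas(F_R,K))$, and that pairing it against the cup product $c_o = c_{o_{\fp_1}}\cup\cdots\cup c_{o_{\fp_r}}$ via \eqref{e:cap1} amounts, after choosing an explicit fundamental cycle representing $\vartheta$ and explicit inhomogeneous cocycle representatives $z_{f_i, o_{\fp_i}}$ with $f_i = 1_{\cO_{\fp_i}}$, to integrating a product of the functions $(f_i - a_i f_i)\cdot o_{\fp_i} + (a_i f_i)\cdot o_{\fp_i}(a_i)$ against the measures $\mu_{\chi,\fb,\lambda}$ that build $\kappa_{\chi,\lambda}$. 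The key point is that $o_{\fp}$ is the valuation, so $(1-a)(o_\fp\cdot f)$ with $f = 1_{\cO_\fp}$ is, up to sign, essentially $\ord_\fp(a)\cdot 1_{\cO_\fp}$ plus a piece supported away from $0$; this is exactly the combinatorial input that in \cite{ds} collapses the $r$-fold cap product against the degree-$(n-1)$ Eisenstein class paired with the degree-$(n+r-1)$ cycle into a single Shintani $L$-value summed over a set of representatives, recovering $L_{\lambda}(\chi,0) = (1-\chi(\lambda)\l)L(\chi,0)$ times the Euler factors at $R_0$.

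Concretely, the steps would be: (1) Fix a decomposition of $E_R^*$ (rank $n+r-1$) compatible with the sub-rank-$(n-1)$ group $E^{*,+}$ of totally positive units acting on the Shintani cones and the ``extra'' rank-$r$ directions coming from the $R$-units generating the $\fp$-adic valuations; choose $\vartheta$ as the product of a fundamental class $\vartheta_0 \in H_{n-1}(E^{*,+},\Z)$ with the cycle in $H_r$ of the free abelian group on $R$-unit generators. (2) Show that against $c_o$, only the top-degree (valuation-type) part of each $z_{f_i,o_{\fp_i}}$ contributes, using that $o_{\fp_i}$ vanishes on $\cO_{\fp_i}^*$ so the cocycle is supported on the ``boundary strata'' of the $F_R$ integration domain, and that integrating $1_{\cO_{\fp_i}}$ against the Shintani measure picks out exactly the terms $\xi$ with $\xi \in \cO_{\fp_i}$, i.e.\ integral at all primes of $R$. (3) Identify the resulting expression with $\sgn$ times $\sum L_\lambda(C^*,\chi,\fb,\cO_R,0)$ summed over a Shintani decomposition of a fundamental domain for $E^{*,+}$ acting on $(\R^{>0})^n$, which by Shintani's theorem and the cocycle relation (Proposition~\ref{p:cocycle}) equals $L_\lambda(\chi,0)$ adjusted by the class-group sum in \eqref{e:kapchi} to give $(1-\chi(\lambda)\l)L(\chi,0)\prod_{\fp\in R_0}(1-\chi(\fp))$. (4) Track signs: the $\sgn(x)$ in the definition of $\mu_{\chi,\fb,\lambda}$, the sign of the permutation relating the chosen cycle to $\vartheta$, and the Koszul signs in the cup product each contribute, and collecting them yields the stated $(-1)^{r(n-1)}$; this fixes ``the correct choice of sign for $\vartheta$.''

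I expect the main obstacle to be step (2)–(3): carefully translating the present cohomological formalism (inhomogeneous 1-cocycles $z_{f,g}$ cupped together, capped with the Eisenstein class capped with $\vartheta$) into the bar-resolution computation of \cite{ds} so that the ``only the valuation part contributes'' reduction is rigorous rather than heuristic. The subtlety is that $z_{f,o_\fp}(a)$ is genuinely not a coboundary — $o_\fp$ does not extend continuously to $F_\fp$ — so one cannot simply write $c_{o_\fp}$ as $(1-a)(\text{something})$ and kill it; instead one must use that the Eisenstein measure $\mu_{\chi,\fb,\lambda}$ has a specific homogeneity under $E_R^*$ (it transforms by $\chi$) which, when combined with the $1_{\cO_{\fp_i}}$'s, forces the integral to localize. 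Getting the interchange of the $r$ cap products with the single $(n-1)$-dimensional Eisenstein class correct — i.e.\ verifying the identity at the level of explicit cocycles and not merely up to sign or up to coboundary — is where essentially all the work lies, and is precisely what \cite{ds} supplies once the notational dictionary is set up.
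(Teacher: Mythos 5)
Your strategy is essentially the paper's: unwind the cap product into a sum of smoothed Shintani $L$-values over a Shintani domain, identify that sum with $(1-\chi(\lambda)\l)L(\chi,0)\prod_{\fp\in R_0}(1-\chi(\fp))$, and track signs. The paper, however, packages the cohomological reduction differently from your steps (1)--(2), and it is worth noting why. Rather than splitting $\vartheta$ across a chosen direct-product decomposition $E_R^*\cong E^*\times\Z^r$ and cupping $c_o$ against the $\Z^r$-factor by hand, the paper first establishes the $L$-value identity (\ref{e:shinlchi2}) purely at the level of $E^*$-cohomology (capping with $\vartheta_0\in H_{n-1}(E^*,\Z)$ and summing over $\fb_i$ and $\fc_j$), then pushes $(\vartheta_0,\ldots,\vartheta_0)$ through Shapiro's Lemma using the $E_R^*$-module $\Z[\sI_R]$ and the map $\fc\mapsto 1_{\fc^R}$ into a class $\widetilde{\vartheta}\in H_{n-1}(E_R^*,C_c(F_R,\Z))$, and finally invokes \cite[Lemma 3.5]{ds} for the identity $\widetilde{\vartheta}=c_o\cap\vartheta$. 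That last lemma is precisely the ``main obstacle'' you flag in your final paragraph; the Shapiro/$\Z[\sI_R]$ packaging is what makes it clean. One substantive point your sketch glosses over: $E_R^*/E^*$ embeds in $\Z^r$ only with finite index $m=\#(\sI_R/\sH_R)$, so the resulting sum must run over the $m\cdot h$ representatives $\fb_i\fc_j$ of the narrow class group of $F$, not merely the $h$ classes for $\cO_{F,R}$. Your step (2), built solely around $1_{\cO_{\fp_i}}$, does not by itself produce the $\fc_j$-translates, whereas the paper's map $\delta:\Z[\sI_R]\to C_c(F_R,\Z)$ does so automatically. With that correction your argument converges to the paper's.
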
 
\begin{proof} Let $\sI_R$ be the group of fractional ideals of $F$ generated by the elements of $R$ and let $\sH_R\subseteq \sI_R$ be the subgroup of principal fractional ideals that have a totally positive generator. Let
$\fc_1,\ldots, \fc_m$ be a system of representatives for $\sI_R/\sH_R$.  Recall from (\ref{e:kapchi}) that  $\{\fb_1, \dotsc, \fb_h\}$ denotes a set of integral ideals representing the narrow class group of $\cO_{F,R}$.
Note that $\{\fb_i\fc_j\mid i=1,\ldots, h, \, j=1,\ldots, m\}$ is a system of representatives for the narrow class group of $F$. 

Let $E^*$ be the group of totally positive units of $\cO_F$. Let $\cD$ denote a signed Shintani domain for the action of $E^*$ on $(\R^{>0})^n$.  This is a finite formal linear combination of simplicial cones (as in (\ref{e:conedef}))
\[ \cD = \sum_{\nu} a_{\nu} C_\nu, \qquad a_\nu \in \Z, \quad C_\nu = \text{simplicial cone,} \]
whose characteristic function $1_\cD := \sum_\nu a_\nu 1_{C_\nu}$ satisfies 
\[ \sum_{\epsilon \in E^*}1_{\cD}( \epsilon \cdot x) = 1 \]
for all $x \in (\R^{>0})^n$. We have (compare e.g.\ \cite[Lemma 5.8]{ds})
\begin{align}
\begin{split}
\label{e:shinlchi}
\sum_{\nu} \sum_{i=1}^h\sum_{ j=1}^m a_\nu \chi(\fb_i) \N(\fb_i\fc_j)^{-s} L_\lambda(C_\nu, \chi, \fb_i, (\fc_j^{-1})^R, s) &=   \\
 (1-\chi(\lambda) \l^{1-s}) L(\chi, s) &\prod_{\fp \in R_0} (1 - \chi(\fp)\N\fp^{-s})
\end{split}
\end{align}
where $(\fc_j^{-1})^R := \prod_{\fp\in R} \fc_j^{-1}\otimes \cO_{\fp}$. Since $(\fc_j^{-1})^R$ is an $E^*$-stable subset of $F_R$ we see that by restricting the cocycle $\mu_{\chi, \fb_i, \lambda}$ to 
$E^*$ and keeping $U=(\fc_j^{-1})^R$ fixed, i.e.\ by setting 
\[ 
\mu_{\chi, \fb_i, \fc_j, \lambda}(x_1, \dotsc, x_n) := \mu_{\chi, \fb_i, \lambda}(x_1, \dotsc, x_n)((\fc_j^{-1})^R)
\]
for $x_1, \ldots, x_n\in E^*$ we obtain a homogeneous $(n-1)$-cocycle yielding a class \[ [\mu_{\chi, \fb_i, \fc_j, \lambda}]\in H^{n-1}(E^*, \cO_K).\]  For the correct choice of the generator $\vartheta_0$ of $H^{n-1}(E^*, \Z)\cong \Z$ we obtain
\begin{equation}
\label{e:shinlchi2}
(\sum_{i=1}^h\sum_{ j=1}^m \chi(\fb_i) [\mu_{\chi, \fb_i, \fc_j, \lambda}])\cap \vartheta_0 =  (1-\chi(\lambda) \l) L(\chi, 0)\prod_{\fp \in R_0} (1 - \chi(\fp)).
\end{equation}
Indeed by \cite[Theorem 1.5]{cdg} the left hand side is equal to the left hand side of (\ref{e:shinlchi}) for $s=0$ and a specific signed Shintani domain $\cD = \sum a_\nu C_\nu$.  

This formula can also be interpreted as follows. Let $\Z[\sI_R]$ be the group ring of $\sI_R$.
Consider the $E_R^*$-equivariant isomorphism
\begin{equation}
\label{e:shinlchi3}
(\Ind_{E^*}^{E_R^*} \Z)^m\lra \Z[\sI_R]
\end{equation}
corresponding by Frobenius reciprocity to the  
$E^*$-equivariant map 
\begin{align*}
\Z^m &\lra \Z[\sI_R] \\
(a_1, \ldots, a_m) &\longmapsto \prod_{j=1}^m \fc_j^{a_j}. 
\end{align*}

 The assignment $\fc\mapsto 1_{\fc^R}$ extends to a homomorphism 
 \begin{equation} \label{e:shinlchi4}
 \delta\colon \Z[\sI_R]\lra C_c(F_R, \Z). 
 \end{equation}  Composing (\ref{e:shinlchi3}) and (\ref{e:shinlchi4}) we obtain an $E_R^*$-equivariant map
 \begin{equation} \label{e:shinlchi5}
 (\Ind_{E^*}^{E_R^*} \Z)^m \lra C_c(F_R, \Z).
 \end{equation}
 
By Shapiro's Lemma the homomorphism (\ref{e:shinlchi5}) induces a homomorphism 
\[
H_{n-1}(E^*, \Z)^m \lra H_{n-1}(E_R^*, C_c(F_R, \Z))
\]
and we denote by $\widetilde{\vartheta}\in H_{n-1}(E_R^*, C_c(F_R, \Z))$ the image of $(\vartheta_0, \ldots, \vartheta_0)\in H_{n-1}(E^*, \Z)^m$. Tracing through the definitions, formula (\ref{e:shinlchi2}) is equivalent to \[ \kappa_{\chi, \lambda} \cap \widetilde{\vartheta} =  (1 - \chi(\lambda)\l) L(\chi, 0)\prod_{\fp \in R_0} (1 - \chi(\fp)). \] 
On the other hand by \cite[Lemma 3.5]{ds} we can choose $\vartheta$ so that $\widetilde{\vartheta} = c_o \cap \vartheta$. It follows that
\[
c_o \cap (\kappa_\chi \cap \vartheta) 
= (-1)^{r(n-1)} \kappa_{\chi}\cap \widetilde{\vartheta} = (-1)^{r(n-1)} (1- \chi(\lambda)\l) L(\chi, 0) \prod_{\fp \in R_0} (1 - \chi(\fp))
\] as desired.
\end{proof}

\begin{prop} 
\label{p:kappa}
The constant $\sR_p(\chi)_{J, \an}$ is independent of the choice of the auxiliary prime $\lambda$.
\end{prop}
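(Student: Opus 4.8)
The plan is to reduce the claim to the observation that both the numerator and denominator of \eqref{e:rpandef} change by the same factor when $\lambda$ is replaced by another admissible auxiliary prime $\lambda'$. The key structural fact is that the Eisenstein cocycle with smoothing $\lambda$ is obtained from an unsmoothed (merely $k$-valued, not necessarily integral) distribution-valued cocycle $\mu_{\chi,\fb}$ by applying the operator ``$(1 - \chi(\lambda)\l\cdot[\lambda])$'' in an appropriate sense; more precisely, comparing the definition $L_\lambda(C,\chi,\fb,U,s) = L(C,\chi,\fb\lambda^{-1},U,s) - \chi(\lambda)\l^{1-s}L(C,\chi,\fb,U,s)$ with \eqref{e:kapchi}, one sees that $\kappa_{\chi,\lambda}$ and $\kappa_{\chi,\lambda'}$ both have the same ``limit'' away from the smoothing, and differ by a Hecke-type operator at $\lambda$ versus $\lambda'$. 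I would first make this precise by introducing the distribution-valued class $\kappa_\chi \in H^{n-1}(E_R^*,\Dist(F_R,k))$ from Proposition~\ref{p:cocycle} (summed over ideal classes as in \eqref{e:kapchi}), together with the fact that, for $s=0$, one has an identity of the shape
\[
\kappa_{\chi,\lambda} = (1 - \chi(\lambda)\,\l\, \langle\lambda\rangle_*)\,\kappa_\chi
\]
after a suitable interpretation of the action $\langle\lambda\rangle_*$ of the ideal $\lambda$ on distributions (via the norm map $F_R^* \to F_R^*$ and the reciprocity map), this being exactly the content encoded in the proof of Proposition~\ref{p:denom} via \eqref{e:shinlchi}.

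Next I would compute the effect of this operator on the two cap products appearing in \eqref{e:rpandef}. Since the classes $c_o$ and $c_{\l,J}$ live in $H^r(F_R^*, C_c(F_R,K))$ and involve only the homomorphisms $o_{\fp_i}$ and $\l_{\fp_i}$ — which are \emph{unchanged} when $\lambda$ varies, as $\lambda \notin S_p$ and $\lambda$ plays no role in the definition of $z_{f,g}$ — the only $\lambda$-dependence on the right-hand side of \eqref{e:rpandef} is through $\kappa_{\chi,\lambda}\cap\vartheta$. Applying the operator identity above and using that cap product is bilinear, I would show
\[
c_{\l,J}\cap(\kappa_{\chi,\lambda}\cap\vartheta) = (1 - \chi(\lambda)\l)\cdot\bigl(c_{\l,J}\cap(\kappa_\chi\cap\vartheta)\bigr),
\qquad
c_{o}\cap(\kappa_{\chi,\lambda}\cap\vartheta) = (1 - \chi(\lambda)\l)\cdot\bigl(c_{o}\cap(\kappa_\chi\cap\vartheta)\bigr),
\]
the scalar $(1-\chi(\lambda)\l)$ being the same in both cases because the operator $\langle\lambda\rangle_*$ acts trivially on the relevant homology after pairing — intuitively because translation by a principal-at-$R$ element acts by $\chi$ and the ideal $\lambda$ contributes only the scalar $\chi(\lambda)\l$ once the $L$-value normalization is taken into account. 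This is precisely the mechanism visible in Proposition~\ref{p:denom}, where the factor $(1-\chi(\lambda)\l)$ appears as an overall constant. Taking the quotient in \eqref{e:rpandef}, the factor $(1-\chi(\lambda)\l)$ cancels, leaving a value expressed purely in terms of $\kappa_\chi$ and the $\lambda$-independent classes $c_o, c_{\l,J}$, hence independent of $\lambda$.

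The main obstacle will be making rigorous the operator identity $\kappa_{\chi,\lambda} = (1-\chi(\lambda)\l\langle\lambda\rangle_*)\kappa_\chi$ at the level of cohomology classes rather than merely at the level of the defining $L$-values, since the unsmoothed distribution $\mu_{\chi,\fb}$ is only $k$-valued (not $\cO_K$-valued) and the cap product \eqref{e:cap1} was set up for \emph{measures}. One way around this is to never work with $\kappa_\chi$ directly: instead, given two admissible primes $\lambda$ and $\lambda'$, form the ``double-smoothed'' class $\kappa_{\chi,\lambda,\lambda'}$ (integral for both reasons) and show by two applications of the one-variable computation that $c_{\l,J}\cap(\kappa_{\chi,\lambda,\lambda'}\cap\vartheta)$ equals $(1-\chi(\lambda')\l')$ times $c_{\l,J}\cap(\kappa_{\chi,\lambda}\cap\vartheta)$ and also $(1-\chi(\lambda)\l)$ times $c_{\l,J}\cap(\kappa_{\chi,\lambda'}\cap\vartheta)$, and likewise for $c_o$; dividing, the cross-terms cancel and one gets $\sR_p(\chi)_{J,\an}^{(\lambda)} = \sR_p(\chi)_{J,\an}^{(\lambda')}$ directly. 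Verifying that the double smoothing is compatible with the cocycle relation and that the relevant ``Cassou--Nogu\`es trick'' integrality (as in \cite{cdg}, \cite{ds}) goes through with two auxiliary primes simultaneously — requiring that the cone generators can be chosen to be units at both $\lambda$ and $\lambda'$, which is possible since the set of such generators is Zariski-dense — is the one genuinely technical point, but it is routine given the machinery already invoked in the proof of Proposition~\ref{p:denom}.
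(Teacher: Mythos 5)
Your high-level strategy is the same as the paper's: pass to a double-smoothed class, show both single-smoothed cap-products relate to it by the same scalar factors $(1-\chi(\lambda')\ell')$ and $(1-\chi(\lambda)\ell)$, then cancel. However, there is a genuine gap in the step you call the ``one-variable computation,'' and you have mislocated the technical obstacle.

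The problem is not the Cassou--Nogu\`es integrality with two auxiliary primes (that is indeed routine, as you say). The problem is the class-group bookkeeping. Unwinding the definitions, the double-smoothed Eisenstein cocycle is
\[
\kappa_{\chi,\lambda,\lambda'} \;=\; \sum_{i}\chi(\fb_i)\,\kappa_{\chi,\,\fb_i\lambda'^{-1},\,\lambda}\;-\;\chi(\lambda')\ell'\,\kappa_{\chi,\lambda}.
\]
To extract the factor $(1-\chi(\lambda')\ell')$, one must rewrite the first sum as $\chi(\lambda')\sum_i \chi(\fb_i')\kappa_{\chi,\fb_i',\lambda}$ with $\fb_i' = \fb_i\lambda'^{-1}$, and then identify this with $\chi(\lambda')\kappa_{\chi,\lambda}$. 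But the $\fb_i'$ are a \emph{different} system of representatives of the narrow class group of $\cO_{F,R}$, and the class $\kappa_{\chi,\fb,\lambda}\in H^{n-1}(E_R^*, \Meas(F_R,K))$ is not literally insensitive to replacing $\fb$ by $\fb\cdot(\alpha)$ for a totally positive $\alpha$: the replacement scales the underlying lattices, cones, and compact-opens by $\alpha$, which is not an action of $E_R^*$ but of the larger group $F_+^*$. So the claimed scalar relation $c\cap(\kappa_{\chi,\lambda,\lambda'}\cap\vartheta) = (1-\chi(\lambda')\ell')\,c\cap(\kappa_{\chi,\lambda}\cap\vartheta)$ does not follow ``by two applications of the one-variable computation'' at the level of the $E_R^*$-cohomological construction of \eqref{e:kapchi}.

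This is precisely why the paper passes to the adelic framework: it constructs a class $\wkappa_\lambda$ in $F_+^*$-cohomology (before any $\chi$-twist) on which the group $\sI^S$ of fractional ideals prime to $S$ acts, establishes the operator identity $\wkappa_\lambda - \ell'(\lambda'^{-1}\cdot\wkappa_\lambda) = \wkappa_{\lambda,\lambda'}$ at this un-twisted level, and only introduces $\chi$ afterward via the twisted homology class $\chi^R\cap\vartheta^R$, at which point $\lambda^{-1}\cdot(-)$ becomes scalar multiplication by $\chi(\lambda)$. Your proposed operator identity $\kappa_{\chi,\lambda} = (1-\chi(\lambda)\ell\,\langle\lambda\rangle_*)\kappa_\chi$ conflates these two levels. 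To complete your argument you would either need to go adelic as the paper does, or else prove separately that the cap-products $c\cap(\kappa_{\chi,\lambda}\cap\vartheta)$ are independent of the choice of representatives $\fb_i$ --- which is true, but proving it essentially forces the same $\sI^S$-equivariant setup.
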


\begin{proof} To see this it is useful to work within the adelic framework (compare e.g.\ \cite{ds} or \cite{ds2}).   Before delving into the details, let us summarize the basic idea of the proof.
Let $\lambda'$ be another auxiliary cyclic prime ideal lying above a prime number $\l'\ge n+2$ with $\l\ne \l'$, $S_{\l'}\cap S=\emptyset$.  Replacing $F_R$ with certain adelic spaces, we will describe the construction of   classes 
\[ \tilde{\kappa}_{\lambda}, \tilde{\kappa}_{\lambda'}, \text{ and } \tilde{\kappa}_{\lambda, \lambda'} \]
lying in cohomology groups endowed with an action of the group $\sI^S$ of fractional ideals  of $F$ that are relatively prime to $S$.
It will  follow directly from the definitions that
\begin{equation}
\label{e:changelambda}
  \wkappa_{\lambda} - \l' (\lambda'^{-1} \cdot \wkappa_{\lambda}) = \tilde{\kappa}_{\lambda, \lambda'} = \wkappa_{\lambda'} - \l (\lambda^{-1} \cdot \wkappa_{\lambda'}).
\end{equation}
Under cap product with the classes appearing in the numerator and denominator of  the definition of
 $\sR_p(\chi)_{J, \an}$, the action of $\lambda^{-1}$ is given by multiplication by $\chi(\lambda)$.
Therefore, replacing $\wkappa_{\lambda}$ by the left side of (\ref{e:changelambda}) scales the numerator and denominator of $\sR_p(\chi)_{J, \an}$ each by the constant $1 - \l' \chi(\lambda')$, and leaves the ratio unchanged.  The desired result then follows from (\ref{e:changelambda}).  Let us now carry out the details of this construction.

Let $\A_f$ be the ring of finite adeles of $F$.  For a finite set of finite places $\Sigma$ of $F$ we put \[ \A\!_f^{\Sigma} = \sideset{}{'}\prod_{v\not\in \Sigma\cup S_{\infty}} F_v,  \qquad U^{\Sigma}=\prod_{v\not\in \Sigma\cup S_{\infty}} \cO_v^* \subset (\A\!_f^{\Sigma})^*. \]

Let $a^SU^S\in (\A\!_f^{S})^*/U^S$ and let \[ \fa = F\cap a^S\prod_{v\not\in S\cup S_{\infty}} \cO_v \] be the fractional $\cO_{F, S}$-ideal attached to the idele $a^S$. For linearly independent elements $x_1, \dotsc, x_n \in F^*_+ \subset (\R^{>0})^n$  and a compact open set
 $U \subset F_R\times F_{S-R}^*$,
the Shintani zeta function 
\[ \zeta(x_1, \dots, x_n; U, \fa, s) = \sum_{\stack{\xi \in C^*(x_1, \dotsc, x_n)}{\xi\in \fa \cap U}} \frac{1}{\N\xi^s}, \qquad \Real(s) > 1
\]
has a meromorphic continuation to $\C$ and its values at non-positive integers are rational. 
As any compact open subset of \[ F_R \times (\A\!_f^{R})^*/U^S = F_R\times F_{S-R}^* \times (\A\!_f^{S})^*/U^S \] can be written as a disjoint union of sets of the form $U\times a^S U^S$ considered above, the assignment 
\[
U\times a^S U^S \longmapsto \sgn(x) \zeta(x_1, \dots, x_n; U, \fa, 0)
\]
defines a $\Q$-valued distribution $\wmu(x_1, \dots, x_n)$ on $F_R \times (\A\!_f^{R})^*/U^S$. Moreover \[ (x_1, \dots, x_n)\longmapsto \wmu(x_1, \dots, x_n) \] is a homogeneous $(n-1)$-cocycle for the group $F_+^*$ of totally positive elements of $F$. 

As before, ``smoothing" with respect to the auxiliary prime $\lambda$ yields an integral valued distribution. For that let $S_\l$ denote the set of primes of $F$ lying above $\l$ and  
let $(F_+^{\l})^* \subset F^*_+ $  be the subgroup of elements with valuation $0$ at every prime in $S_{\l}$. Given a compact open subset \[ U \subset F_R \times (\A\!_f^{R\cup S_{\l}})^*/U^{S\cup S_{\l}} \] we define two associated compact open subsets \[ U_0, U_1 \subset F_R \times (\A\!_f^{R})^*/U^S \] by
\[ U_0 = U \times \prod_{v \mid  \l} \cO_v^*, \qquad U_1 = U \times \varpi_\lambda \cO_\lambda^* \times \prod_{\stack{v \mid  \l}{v \neq \lambda}}\cO_v^*, \]
where $\varpi_{\lambda}$ is a prime element of $\cO_{\lambda}$. For $x_1, \dotsc, x_n \in (F_+^{\l})^*$, define
\[
\wmu_{\lambda}(x_1, \dots, x_n)(U) = \wmu(x_1, \dots, x_n)(U_0) - \l \wmu(x_1, \dots, x_n)(U_1).
\]
Then $(x_1, \dots, x_n)\mapsto \wmu_{\lambda}(x_1, \dots, x_n)$ is a homogeneous $(n-1)$-cocycle for $(F_+^{\l})^*$ with values in \[ \Dist(F_R \times (\A\!_f^{R\cup S_{\l}})^*/U^{S\cup S_{\l}}, \Z)= \Hom(C_c(F_R \times (\A\!_f^{R\cup S_{\l}})^*/U^{S\cup S_{\l}}, \Z), \Z). \] Note that there exists a canonical isomorphism of $F_+^*$-modules
\[
\Ind_{(F_+^{\l})^*}^{F^*_+} C_c(F_R \times (\A\!_f^{R\cup S_{\l}})^*/U^{S\cup S_{\l}})\cong C_c(F_R \times (\A\!_f^{R})^*/U^S)
\]
and hence, by Shapiro's Lemma, an isomorphism
\begin{equation} \label{e:shapiro}
H^{n-1}((F_+^{\l})^*, \Dist(F_R \times (\A\!_f^{R\cup S_{\l}})^*/U^{S\cup S_{\l}}, \Z)) \cong H^{n-1}(F_+^*, \Dist(F_R \times (\A\!_f^R)^*/U^S, \Z)). 
\end{equation}
A $\Z$-valued distribution is of course $p$-adically bounded and hence yields a 
measure.  Therefore by (\ref{e:shapiro}) the cohomology class of the cocycle $\wmu_{\lambda}$ defines an element 
\begin{equation}
\label{e:wkappa}
\wkappa_{\lambda} = \wkappa_{\lambda}^R \in H^{n-1}(F_+^*, \Meas(F_R \times (\A\!_f^R)^*/U^S, K)).
\end{equation}
The canonical action of the finite ideles $\A_f^*$ on $C_c(F_R \times (\A\!_f^R)^*/U^S, \Z)$ induces an action of the idele class group $\A_f^*/F_+^* U^S$ on $H^{n-1}(F_+^*, \Meas(F_R \times (\A\!_f^R)^*/U^S, K))$. In particular we obtain an action of  the group $\sI^S$ of fractional ideals  of $F$ that are relatively prime to $S$ on $H^{n-1}(F_+^*, \Meas(F_R \times (\A\!_f^R)^*/U^S, K))$. 

Now let $\lambda'$ be another auxiliary cyclic prime ideal as above.  One can carry out the above construction of the smoothed cocycle $\wmu_{\lambda}$ not only for the single primes $\lambda, \lambda'$, but more generally for a finite set of such primes (see \cite{ds}).
One can easily see that the image of $\wkappa_{\lambda,\lambda'}$ under the canonical map 
\[
H^{n-1}((F_+^{\l, \l'})^*, \Meas(F_R \times (\A\!_f^{R\cup S_{\l, \l'}})^*/U^{S\cup S_{\l, \l'}}, K)) \cong H^{n-1}(F_+^*, \Meas(F_R \times (\A\!_f^R)^*/U^S, K))
\]
is equal to the image of left and right hand side of (\ref{e:changelambda}). 

By class field theory we can view our character $\chi$ as a character of the idele class group 
\[ \chi\colon (\A_f)^*/F_+^*U^S\lra \cO_K^*. \] By assumption the local components $\chi_{\fp}\colon F_{\fp}^* \lra K^*$ of $\chi$ are trivial for all $\fp\in R$, so we may omit them, i.e.\ we view $\chi$ as a character 
\[ \chi^R\colon (\A\!_f^R)^*/F_+^*U^S\lra K^*. \]
The character $\chi^R$ can thus be viewed as an element 
 \begin{equation} \label{e:chir}
 \chi^R \in H^0(F_+^*, C((\A\!_f^R)^*/U^S, K)). \end{equation} 
 Let $\cF$ denote a finite subset of $(\A\!_f^R)^*/U^R$ that is a fundamental domain for the action of $F_+^*/E_R^*$.  For example, we may choose \[ \cF = \{b_1U^R,   \ldots, b_h U^R\} \] where $b_1, \ldots, b_h\in (\A\!_f^R)^*$ are ideles whose associated fractional $\cO_{F,R}$-ideals are $(\fb_1)_R, \ldots, (\fb_h)_R$. 
 The constant function 1 yields an element $1_\cF \in H^0(E_R^*, C(\cF, \Z))$, which under cap product with $\vartheta$ yields an element
 \begin{equation} \label{e:1fc}
1_{\cF} \cap \vartheta \in H_{n+ r- 1}(E_R^*, C(\cF, \Z)) \cong H_{n+r-1}(F_+^*, C_c((\A\!_f^R)^*/U^R, \Z)).
\end{equation}
The isomorphism in (\ref{e:1fc}) follows from Shapiro's Lemma. We denote the image of $1_{\cF} \cap \vartheta$ under this isomorphism by $\vartheta^R$. By taking the cap product with (\ref{e:chir})  we obtain a class
\begin{equation}
\chi^R \cap \vartheta^R\in H_{n+r-1}(F_+^*, C_c((\A\!_f^R)^*/U^S, K)).
\label{e:intcohom2a}
\end{equation}

The first cap-product in (\ref{e:intcohom2a}) 
is induced by the canonical pairing
\begin{align*}
C((\A\!_f^R)^*/U^S, K) \times C_c((\A\!_f^R)^*/U^R, \Z) &\lra C_c((\A\!_f^R)^*/U^S,K), \\
  (f,g) & \longmapsto f\cdot g.
\end{align*}
By taking the cap-product of the homology class (\ref{e:intcohom2a}) with $c_{\l, J}, c_o \in H^r( F_+^*, C_c(F_R, K))$ we obtain classes 
\[
c_{\l, J}\cap (\chi^R \cap \vartheta^R), \, c_o\cap (\chi^R \cap \vartheta^R)\in H_{n-1}(F_+^*, C_c(F_R\times (\A\!_f^R)^*/U^S, K)).
\]
Similar to \eqref{e:cap1} we have a canonical pairing 
\begin{equation}
\cap: H^{n-1}(F_+^*, \Meas(F_R \times (\A\!_f^R)^*/U^S, K))\times H_{n-1}(F_+^*, C_c(F_R\times (\A\!_f^R)^*/U^S, K)) \lra K
\label{e:intcohom2b}
\end{equation}
induced by $p$-adic integration. Tracing through the definitions, one sees that 
\begin{equation}
\label{e:regadelic}
c \cap (\kappa_{\chi, \lambda} \cap \vartheta) = c\cap (\wkappa_{\lambda} \cap (\chi^R \cap \vartheta^R))
\end{equation}
for $c= c_{\l, J}$ or $c=c_0$ and in particular
\begin{equation}
\label{e:regadelic2}
\sR_p(\chi)_{J, \an} = (-1)^r \frac{c_{\l, J}\cap (\wkappa_{\lambda} \cap (\chi^R \cap \vartheta^R))}{c_o\cap (\wkappa_{\lambda}\cap (\chi^R \cap \vartheta^R))}.
\end{equation}
Note that the pairing (\ref{e:intcohom2b}) is $\sI^S$-equivariant. It follows that for $\fa\in \sI^S$ and any class $c\in H^r( F_+^*, C_c(F_R, K))$ we have 
\begin{align*}
c\cap ((\fa^{-1} \cdot \wkappa_{\lambda}) \cap (\chi^R \cap \vartheta^R)) &=c \cap (\wkappa_{\lambda} \cap (\fa \cdot (\chi^R \cap \vartheta^R)))\\
&= \chi(\fa) \, c\cap (\wkappa_{\lambda} \cap (\chi^R \cap \vartheta^R)).
\end{align*}
Consequently, the fraction on the right hand side of (\ref{e:regadelic2}) does not change if we  replace $\wkappa_{\lambda}$ by $\wkappa_{\lambda} - \l' (\lambda'^{-1} \cdot \wkappa_{\lambda})$. Hence by (\ref{e:changelambda}) the constant $\sR_p(\chi)_{J, \an}$ does not depend on the choice of the auxiliary prime $\lambda$.
\end{proof}

\subsection{Alternate formulation}

We give now a slightly different description of $\sR_p(\chi)_{J, \an}$ that will be used in section \ref{ss:diag}. In the following $J$ denotes a fixed non-empty subset of $R$. We set $s= \# J$ and label the elements of $J$ by $\fp_1, \fp_2, \dots, \fp_s$. Let $E_J^*$ be the group of totally positive $J$-units in $F$. 

Given a fractional ideal $\fb$ of $F$ with $(\fb, S) = 1$, a compact open subset $U$ of $F_J$ and a union of simplicial cones $C$ in $(\R^{>0})^n$, we consider the Shintani zeta function
\begin{equation} \label{e:szf}
 \zeta(C, \fb, U, s):=  \N\fb^{-s}\sum_{\alpha} \N\alpha^{-s}, 
 \end{equation}
where $\alpha$ ranges over elements of $\fb^{-1}_J= \fb^{-1}\otimes_{\cO_F} \cO_{F, J}$ satisfying the conditions $(\alpha, S-R) = 1$,  $\alpha \equiv 1 \pmod{\ff}$, $\alpha \in U$, and $\alpha \in C$. Using the auxiliary prime $\lambda \subset \cO_F$ satisfying the conditions stated in \S\ref{s:eis}, we define
\begin{equation} \label{e:szf2} \zeta_\lambda(C, \fb, U, s):= \zeta(C, \fb, U, s) - \l^{1-s} \zeta(C, \fb \lambda^{-1}, U, s). 
\end{equation}
Let $\ff$ be the conductor of the extension $H/F$.  By $E(\ff)$ (respectively $E(\ff)_J$) we denote the group of totally positive units (respectively totally positive $J$-units) congruent to $1$ (mod $\ff$). For $x_1, \dotsc, x_n \in E(\ff)_J$ and compact open $U \subset F_J$ we put
\begin{equation}
\nu_{\fb, \lambda}^J(x_1, \dotsc, x_n)(U) := \sgn(x) \zeta_\lambda( C^*(x_1, \dotsc, x_n), \fb, U, 0). 
\label{e:nupartf}
\end{equation}
Then $\nu_{\fb, \lambda}^J$ is again a homogeneous $(n-1)$-cocycle on $E(\ff)_J$ with values in the space of $\Z$-distribution on $F_J$, hence defines a class
\begin{equation}
\omega_{\ff, \fb, \lambda}^J := [\nu_{\fb, \lambda}^J]\in H^{n-1}(E(\ff)_J, \Meas(F_J, K)).
\label{e:nupartclass}
\end{equation}
Let $G_\ff$ denote the narrow ray class group of $F$ of conductor $\ff$. We also consider the following variant of the Eisenstein cocycle (\ref{e:kapchi})
\begin{equation}
\label{e:nuchif}
\omega_{\chi, \lambda}^J = \sum_{[\fb] \in G_\ff}  \chi(\fb) \omega_{\ff, \fb, \lambda}^J
\end{equation}
where the sum ranges over a system of representatives of $G_{\ff}$.
We also define the following classes in $H^r( F_J^*, C_c(F_J, K))$
\[ c_{o}^J= c_{o_{\fp_1}}  \cup \cdots \cup c_{o_{\fp_s}}, \qquad c_{\l}^J = c_{\l_{\fp_1}} \cup \cdots \cup c_{\l_{\fp_s}}.
\]

\begin{prop} 
\label{p:Jvariant}
Let $\vartheta'\in H_{n+s-1}(E(\ff)_J, \Z)$ be a generator. Then we have
\begin{equation}
\label{e:Jvariant}\sR_p(\chi)_{J, \an} = (-1)^{\#J} \, \frac{c_{\l}^J \cap (\omega_{\chi, \lambda}^J \cap \vartheta')
 }{c_{o}^J \cap (\omega_{\chi, \lambda}^J \cap \vartheta')} . 
 \end{equation}
In fact  the numerator and denominator of the right hand sides of (\ref{e:rpandef}) and of (\ref{e:Jvariant}) coincide up to the same sign. 
\end{prop}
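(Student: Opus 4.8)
The plan is to identify the right-hand sides of (\ref{e:rpandef}) and (\ref{e:Jvariant}) term by term, by showing that the classes $\omega_{\chi, \lambda}^J \cap \vartheta'$ and $\kappa_{\chi, \lambda} \cap \vartheta$ pair identically against the cup products $c_\l^J$, $c_o^J$ on the one hand and $c_{\l, J}$, $c_o$ on the other. The essential point is that for the primes $\fp \notin J$, the factors $c_{o_{\fp}}$ appearing in $c_{\l, J}$ and $c_o$ are the ``same'' on both sides, so after contracting them one is left with a pairing that only involves the coordinates indexed by $J$. Concretely, I would first recall, as in the proof of Proposition~\ref{p:denom} via \cite[Lemma 3.5]{ds}, that capping $\kappa_{\chi, \lambda}$ against the $c_{o_{\fp}}$ for $\fp \in R \setminus J$ has the effect of restricting the Shintani $L$-function distribution from $F_R$ to $F_J$ (integrating out the $\fp$-coordinate against the characteristic function of $\cO_\fp$, up to the smoothing factor), and simultaneously replaces the unit group $E_R^*$ of rank $n + r - 1$ by the $J$-unit group of rank $n + s - 1$. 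This is exactly the passage from $\kappa_{\chi, \lambda}$ to $\omega_{\chi, \lambda}^J$, provided one keeps careful track of the shift from the narrow $R$-class group to the narrow ray class group $G_\ff$.

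Second, I would set up the bookkeeping for the ray class group. The cocycle $\omega_{\chi, \lambda}^J$ is built from the groups $E(\ff)_J$ and a sum over $G_\ff$, whereas $\kappa_{\chi, \lambda}$ uses $E_R^*$ and a sum over the narrow $R$-class group. The congruence condition $\alpha \equiv 1 \pmod \ff$ in (\ref{e:szf}) together with the condition $\chi((\xi))$ in (\ref{e:shinL}) reflects the same character-twisting data; one checks via class field theory (exactly as in the passage from (\ref{e:shinlchi}) to (\ref{e:shinlchi2}) in the proof of Proposition~\ref{p:denom}, and as in \cite[\S 5]{ds}) that $\sum_{[\fb] \in G_\ff} \chi(\fb)\, \nu_{\fb, \lambda}^J$ computes the same $L$-values as the restriction of $\kappa_{\chi, \lambda}$. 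Here one uses that a system of representatives for $G_\ff$ refines a system of representatives for the narrow $R$-class group, and that the extra ``ray'' directions are precisely absorbed into the Shapiro induction from $E(\ff)_J$ up to $E_R^*$ along the fibres of the corresponding covering of class groups.

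Third, with these identifications in hand, I would invoke the projection formula for the cap product: since $c_{\l, J} = \big(\bigcup_{i \in J} c_{\l_{\fp_i}}\big) \cup \big(\bigcup_{i \notin J} c_{o_{\fp_i}}\big)$ and $c_o = \big(\bigcup_{i \in J} c_{o_{\fp_i}}\big) \cup \big(\bigcup_{i \notin J} c_{o_{\fp_i}}\big)$ share the common factor $\bigcup_{i \notin J} c_{o_{\fp_i}}$, capping $\kappa_{\chi, \lambda} \cap \vartheta$ first against this common factor yields (up to a sign that is the same for numerator and denominator, coming from graded-commutativity of cup/cap and the Koszul sign rule) the class $\omega_{\chi, \lambda}^J \cap \vartheta'$, with $\vartheta'$ the image of $\vartheta$ under the induced map on homology. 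The remaining pairings against $c_\l^J$ and $c_o^J$ then match (\ref{e:Jvariant}) on the nose, and the overall sign $(-1)^{\#J}$ is preserved because it depends only on $\#J = s$, which is unchanged. This also establishes the final sentence of the proposition: numerator and denominator of (\ref{e:rpandef}) and of (\ref{e:Jvariant}) agree up to one and the same sign.

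The main obstacle I anticipate is neither conceptual nor analytic but rather the careful matching of the two homological normalizations --- that is, proving that the generator $\vartheta' \in H_{n+s-1}(E(\ff)_J, \Z)$ obtained by contracting $\vartheta$ against $\bigcup_{i \notin J} c_{o_{\fp_i}}$ and passing through the relevant Shapiro isomorphisms is indeed a generator (not some multiple), and pinning down the precise sign. Since both sides of (\ref{e:Jvariant}) are independent of the choice of generator $\vartheta'$ (the ambiguity cancels in the ratio), the ``generator'' part is automatic once one knows the contraction map $H_{n+r-1}(E_R^*, \Z) \to H_{n+s-1}(E(\ff)_J, \Z)$ is surjective, which follows from the structure of the cup products $c_{o_\fp}$ as in \cite[Lemma 3.5]{ds}; and the sign, being common to numerator and denominator, does not affect the stated equality. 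Thus the real content is the $L$-function identity between the restriction of $\kappa_{\chi, \lambda}$ and $\omega_{\chi, \lambda}^J$, which is a direct unwinding of the definitions in (\ref{e:shinL}), (\ref{e:szf}), and (\ref{e:nuchif}) combined with the class field theory dictionary already used in Proposition~\ref{p:denom}.
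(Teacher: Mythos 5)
Your proposal is correct in outline and follows essentially the same route as the paper: reduce the rank by capping $\kappa_{\chi,\lambda}\cap\vartheta$ against the common factor $c_o^{J'}=\bigcup_{\fp\notin J}c_{o_\fp}$, invoke \cite[Lemma 3.5]{ds} (through the maps $\delta_\fp\colon C_c(F_\fp^*/U_\fp,\Z)\to C_c(F_\fp,\Z)$) to identify the result with a $J$-variant of the cocycle, then unwind the narrow $R$-class group into the ray class group $G_\ff$ via Shapiro's Lemma and the $G_\ff$-equivariance of the pairing. One point where your description is appreciably looser than the paper's argument: you speak of ``contracting $\vartheta$ against $\bigcup_{i\notin J}c_{o_{\fp_i}}$'' to produce $\vartheta'$ directly, as if there were a single induced map $H_{n+r-1}(E_R^*,\Z)\to H_{n+s-1}(E(\ff)_J,\Z)$. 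In the paper this transition is mediated by an adelic chain $\vartheta\mapsto 1_{\cF}\cap\vartheta\mapsto\vartheta^R\mapsto\vartheta^J\mapsto\vartheta_\ff^J$, where the cap with $c_o^{J'}$ is taken against $\chi^R\cap\vartheta^R$ (not against $\vartheta$ itself), and the identification with $\sum_{[\fb]\in G_\ff}[\fb]\cdot\vartheta'$ in (\ref{e:raydecomp}) is what makes the $G_\ff$-sum in $\omega_{\chi,\lambda}^J$ appear; skipping that chain risks mismatching the two sums over ideal classes. The ``Shapiro induction from $E(\ff)_J$ up to $E_R^*$'' you mention is actually, in the paper, two separate inductions (from $E(\ff)_J$ up to $\Gamma(\ff)$, and from $E^*$ up to $F_+^*$), so it would be worth making that precise. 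These are matters of precision rather than a conceptual gap, and your observation that the sign and the generator ambiguity cancel in the ratio is accurate, but the paper's additional stated claim that numerator and denominator agree with (\ref{e:rpandef}) up to one common sign (not merely that the ratio agrees) does require the adelic bookkeeping you compressed.
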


\begin{proof} As in the proof of Prop.\ \ref{p:kappa} it is best to work within the adelic framework. 
We put $J' = R-J$, $S' = S-J'$ and label the elements of $J'$ by $\fp_{s+1}, \dots, \fp_r$. 

By replacing everywhere $R$ by $J$ in the definition of (\ref{e:wkappa}) and (\ref{e:1fc}) we obtain classes
\begin{equation*}
\wkappa_{\lambda}^J \in H^{n-1}(F_+^*, \Meas(F_J \times (\A\!_f^J)^*/U^{S'}, K)).
\end{equation*}
and 
\begin{equation*} 
\vartheta^J \in H_{n+s-1}(F_+^*, C_c((\A\!_f^J)^*/U^J, \Z)).
\end{equation*}
We claim that 
\begin{eqnarray}
\label{e:Jvariant1a}
c_{\l, J} \cap (\kappa_{\chi, \lambda} \cap \vartheta) = \pm c_{\l}^J \cap (\wkappa_{\lambda}^J \cap (\chi^J \cap \vartheta^J)), \\
\label{e:Jvariant1b} c_{o} \cap (\kappa_{\chi, \lambda} \cap \vartheta) =    \pm c_o^J\cap (\wkappa_{\lambda}^J \cap (\chi^J \cap \vartheta^J)).
\end{eqnarray}
For this it suffices to show by (\ref{e:regadelic}) that
\begin{equation}
\label{e:Jvariant2}
c_o^{J'}\cap (\wkappa_{\lambda}^R \cap (\chi^R \cap \vartheta^R)) = \pm \wkappa_{\lambda}^J \cap (\chi^J \cap \vartheta^J). 
\end{equation}
since $c_{\l, J}= c_{\l}^J\cup c_o^{J'}$ and $c_o = c_o^J\cap c_o^{J'}$.

To prove (\ref{e:Jvariant2}) we introduce maps
\begin{eqnarray*}
&& \delta^J: C_c((\A\!_f^J)^*/U^{S'}, K) \lra  C_c(F_{J'} \times (\A\!_f^R)^*/U^S, K)\\
&& \delta: C_c(F_J \times (\A\!_f^J)^*/U^{S'}, K)  \lra  C_c(F_R \times (\A\!_f^R)^*/U^S, K)
\end{eqnarray*} 
that have as local components at the places $\fp \in J'$ the maps 
\[
\delta_{\fp}: C_c(F_{\fp}^*/U_{\fp}, \Z) \lra C_c(F_{\fp}, \Z), 1_{xU_{\fp}} \mapsto 1_{x\cO_{\fp}}
\] 
introduced in \cite[Remark 3.2]{ds} and that are the identity at all other places. More precisely we have canonical isomorphisms
\begin{eqnarray*}
&& C_c((\A\!_f^J)^*/U^{S'}, K) \cong \bigotimes_{i=s+1}^r C_c(F_{\fp_i}^*/U_{\fp_i}, \Z)\otimes C_c((\A\!_f^R)^*/U^{S}, K)\\
&& C_c(F_J \times (\A\!_f^J)^*/U^{S'}, K) \cong \bigotimes_{i=s+1}^r C_c(F_{\fp_i}^*/U_{\fp_i}, \Z)\otimes C_c(F_J \times (\A\!_f^R)^*/U^{S}, K)\\
\end{eqnarray*}
and canonical monomorphisms
\begin{eqnarray}
&& \bigotimes_{i=s+1}^r C_c(F_{\fp_i}, \Z)\otimes C_c((\A\!_f^R)^*/U^{S}, K)
\hookrightarrow C_c(F_{J'} \times (\A\!_f^R)^*/U^S, K) \label{e:tens1}\\
&& \bigotimes_{i=s+1}^r C_c(F_{\fp_i}, \Z)\otimes C_c(F_J \times (\A\!_f^R)^*/U^{S}, K)\hookrightarrow 
C_c(F_R \times (\A\!_f^R)^*/U^S, K) \label{e:tens2}
\end{eqnarray}
and we define $\delta^J$ and $\delta$ as the composite of 
\[
\left(\bigotimes_{i=s+1}^r  \delta_{\fp_i}\right) \otimes \Id_{C_c((\A\!_f^R)^*/U^{S}}, K)\qquad \mbox{and} \qquad \left(\bigotimes_{i=s+1}^r  \delta_{\fp_i}\right) \otimes \Id_{C_c(F_J \times (\A\!_f^R)^*/U^{S}, K)}
\] 
with (\ref{e:tens1}) and (\ref{e:tens2}) respectively. 

Dualizing $\delta$ yields 
\[
\Delta:  \Meas(F_R \times (\A\!_f^R)^*/U^S, K)\lra  \Meas(F_J \times (\A\!_f^J)^*/U^{S'}, K).
\]
By tracing through the definitions one sees that $\wkappa_{\lambda}^J$ is the image of $\wkappa_{\lambda}^R$ under the induced homomorphism
\[
\Delta_*: H^{n-1}(F_+^*, \Meas(F_R \times (\A\!_f^R)^*/U^S, K))\lra H^{n-1}(F_+^*, \Meas(F_J \times (\A\!_f^J)^*/U^{S'}, K)).
\]
On the other hand by \cite[Lemma 3.5]{ds} the following equation holds in the homology group $H_{n+s-1}(F_+^*C_c(F_{J'} \times (\A\!_f^R)^*/U^S, K))$:
\[
(\delta^J)_*(\chi^J \cap \vartheta^J) = \pm c_o^{J'}\cap (\chi^R \cap \vartheta^R).
\]
We conclude
\begin{eqnarray*}
& c_o^{J'}\cap (\wkappa_{\lambda}^R \cap (\chi^R \cap \vartheta^R)) = \pm \wkappa_{\lambda}^R \cap (\delta^J)_*(\chi^J \cap \vartheta^J) \\
& = \pm \Delta_*(\wkappa_{\lambda}^R) \cap (\chi^J \cap \vartheta^J) = \pm \wkappa_{\lambda}^J \cap (\chi^J \cap \vartheta^J).
\end{eqnarray*}
Having established (\ref{e:Jvariant2}) it remains to show that numerator and denominator of the right hand side of (\ref{e:Jvariant}) are equal to the right hand side of 
(\ref{e:Jvariant1a}) and (\ref{e:Jvariant1b}) respectively. Let 
$\ff = \prod_{\fq\nmid \infty} \fq^{n_{\fq}}$ be the prime decomposition of $\ff$ and put
\[
U(\ff)^J = \prod_{\fq\nmid\infty, \fq\not\in J } U_\fq^{(n_\fq)} 
\]
and 
\[
\Gamma(\ff) = F_+^* \cap \prod_{\fq\mid \ff} U_\fq^{(n_\fq)} = \{x\in F_+^*\mid \ord_{\fq}(x-1) \ge n_\fq\,\,\forall\,\,\fq\mid \ff\}.
\]
Here as usual we have set
\[
U_{\fq}^{(n_\fq)} = \left\{\begin{array}{cc} \cO_v^*& \mbox{if $n_\fq =0$;}\\
1 + \fq^{n_{\fq}} \cO_{\fq} & \mbox{if $n_\fq>0$, i.e.\ if $\fq\in S_{\ram}$.} 
\end{array}\right.
\]
Let $\pi: F_J \times (\A\!_f^J)^*/U^{S'} \to F_J \times (\A\!_f^J)^*/U(\ff)^J$ denote the canonical projection. It induces a map
\[
C_c(F_J \times (\A\!_f^J)^*/U(\ff)^J, \Z) \to C_c(F_J \times (\A\!_f^J)^*/U^S, \Z), \qquad f\mapsto f\circ \pi
\]
hence by dualising a homomorphism
\begin{equation}
\label{e:imagemeas}
\pi: \Meas(F_J \times (\A\!_f^J)^*/U^{S'}, K) \to \Meas(F_J \times (\A\!_f^J)^*/U(\ff)^J, K).
\end{equation}
We denote by $\womega_{\ff, \lambda}^J$ the image of $\wkappa_{\lambda}^J$ under \begin{eqnarray*}
H^{n-1}(F_+^*, \Meas(F_J \times (\A\!_f^J)^*/U^{S'}, K)) & \lra & H^{n-1}(F_+^*, \Meas(F_J \times (\A\!_f^J)^*/U(\ff)^J, K))\\
& \stackrel{\cong}{\lra} & H^{n-1}(\Gamma(\ff), \Meas(F_J \times (\A\!_f^{J\cup S_{\ram}})^*/U^{J\cup S_{\ram}}, K))
\end{eqnarray*}
where the first arrow is induced by (\ref{e:imagemeas}) and the second by weak approximation and Shapiro's Lemma. We also denote the image of $\vartheta^J$ under 
\begin{eqnarray*}
H_{n+s-1}(F_+^*, C_c((\A\!_f^J)^*/U^J, \Z)) & \lra & H_{n+s-1}(F_+^*, C_c((\A\!_f^J)^*/U(\ff)^J, \Z))\\
& \stackrel{\cong}{\lra} & H_{n+s-1}(\Gamma(\ff), C_c((\A\!_f^{J\cup S_{\ram}})^*/U^{J\cup S_{\ram}}, \Z))
\end{eqnarray*}
by $\vartheta_{\ff}^J$. Here the first map is induced by the natural projection $(\A\!_f^J)^*/U(\ff)^J\to (\A\!_f^J)^*/U^J$ and the second again by Shapiro's Lemma. 

Moreover we view the character $\chi^J$ as an element of 
\begin{equation*} 
H^0(F_+^*, C((\A\!_f^J)^*/U(\ff)^J, K))\cong H^0(\Gamma(\ff), C((\A\!_f^{J\cup S_{\ram}})^*/U^{J\cup S_{\ram}}, \Z))
\end{equation*}
so that we have 
\begin{equation}
\label{e:Jvariant3}
c\cap (\wkappa_{\lambda}^J \cap (\chi^J \cap \vartheta^J)) = c\cap (\womega_{\ff, \lambda}^J \cap (\chi^J \cap \vartheta_{\ff}^J))
\end{equation}
for $c= c_{\l}^J$ of $c_o^J$. Note that the cohomology groups
\[ 
H^{n-1}(\Gamma(\ff), \Meas(F_J \times (\A\!_f^{J\cup S_{\ram}})^*/U^{J\cup S_{\ram}}, K)),\qquad H^0(\Gamma(\ff), C((\A\!_f^{J\cup S_{\ram}})^*/U^{J\cup S_{\ram}}, \Z))
\]
and the homology group 
\[
 H_{n+s-1}(\Gamma(\ff), C_c((\A\!_f^{J\cup S_{\ram}})^*/U^{J\cup S_{\ram}}, \Z)), \\
\]
all carry a natural $G_{\ff} \cong (\A\!_f^{J\cup S_{\ram}})^*/\Gamma(\ff)U^{J\cup S_{\ram}}$-action. Consider the homomorphisms
\begin{equation}
\label{e:shapiro3}
H_{n+s-1}(E(\ff)_J, \Z) \to H_{n+s-1}(\Gamma(\ff), C_c((\A\!_f^{J\cup S_{\ram}})^*/U^{J\cup S_{\ram}}, \Z))
\end{equation}
induced by the $E(\ff)_J$-equivariant map $\Z\to C_c((\A\!_f^{J\cup S_{\ram}})^*/U^{J\cup S_{\ram}}, \Z)$ sending $1$ to the characteristic function of $U^{J\cup S_{\ram}}$ and by the inclusion $E(\ff)_J\hookrightarrow \Gamma(\ff)$. By abuse of notation we denote the image of the generator $\vartheta'\in H_{n+s-1}(E(\ff)_J, \Z)$ under (\ref{e:shapiro3}) by $\vartheta'$ as well. It is easy to see that with the correct choice of sign of $\vartheta'$ we have
\begin{equation}
\label{e:raydecomp}
\vartheta_{\ff}^J = \sum_{[\fb] \in G_\ff} [\fb]\cdot \vartheta'
\end{equation}
Note that $\chi^J \cap \vartheta'$ is independent of $\chi$. In fact if $\epsilon\in H^0(\Gamma(\ff), C((\A\!_f^{J\cup S_{\ram}})^*/U^{J\cup S_{\ram}}, \Z))$ denotes the characteristic function of $\Gamma(\ff) U^{J\cup S_{\ram}}/U^{J\cup S_{\ram}}$ then we have 
\begin{equation*}
\chi^J \cap \vartheta' =  \epsilon \cap \vartheta
\end{equation*}
Thus for $c= c_{\l}^J$ or $c= c_o^J$ it follows that
\begin{align*}
 c\cap (\womega_{\ff, \lambda}^J \cap (\chi^J \cap \vartheta_{\ff}^J)) &= c \cap \left(\sum_{[\fb] \in G_\ff} \womega_{\ff, \lambda}^J \cap (\chi^J \cap ( [\fb]^{-1} \cdot \vartheta'))\right)\\
& = c\cap \left(\sum_{[\fb] \in G_\ff} ([\fb] \cdot \womega_{\ff, \lambda}^J) \cap (([\fb] \cdot \chi^J) \cap \vartheta')\right) \\
&= \sum_{[\fb] \in G_\ff} \chi(\fb)\, c\cap (([\fb] \cdot \womega_{\ff, \lambda}^J) \cap (\epsilon \cap \vartheta')).
\end{align*}
Passing back from the idele- to ideal-theoretic language we get
\begin{equation*}
\label{e:Jvariant4a}
c \cap (([\fb] \cdot \womega_{\ff, \lambda}^J) \cap (\epsilon \cap \vartheta')) = c \cap (\omega_{\ff, \fb, \lambda}^J\cap \vartheta').
\end{equation*}
Multiplying (\ref{e:Jvariant3}) with $\chi(\fb)$ and summing over the set of representatives $\fb$ of $G_{\ff}$ yields 
\begin{equation}
\label{e:Jvariant4}
c \cap (\womega_{\ff, \lambda}^J \cap (\chi^J \cap \vartheta_{\ff}^J)) = c \cap (\omega_{\chi, \lambda}^J \cap \vartheta').
\end{equation}
Finally, by combining (\ref{e:regadelic}), (\ref{e:Jvariant1a}), (\ref{e:Jvariant1b}), (\ref{e:Jvariant3}) and (\ref{e:Jvariant4}) the assertion follows.
\end{proof}

\section{Evidence}

\subsection{The full Gross regulator}

The following result implies that Conjecture~\ref{c:main2} for $J=R$ is equivalent to part (2) of Conjecture~\ref{c:gross}.  Hence by \cite{dkv}, Conjecture~\ref{c:main2} holds unconditionally in this case. 

\begin{theorem}   \label{t:jr}
For $J = R$, we have \[ \sR_p(\chi)_{J, \an} =  \frac{L_p^{(r)}(\chi, 0)}{r! L(\chi, 0) \prod_{\fp \in R_1} (1 - \chi(\fp))}, \]
and hence Conjecture~\ref{c:main2} holds for $J=R$.
\end{theorem}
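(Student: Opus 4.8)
The plan is to reduce the statement $\sR_p(\chi)_{R,\an} = L_p^{(r)}(\chi,0)/(r!\,L(\chi,0)\prod_{\fp\in R_1}(1-\chi(\fp)))$ to the known $p$-adic interpolation formula for $L_p(\chi\omega,s)$ near $s=0$, using the cohomological machinery already set up in Section~\ref{s:eis} together with Proposition~\ref{p:denom}. Since $J=R$, the numerator class $c_{\ell,R} = c_{\ell_{\fp_1}}\cup\cdots\cup c_{\ell_{\fp_r}}$ involves only the logarithm maps $\ell_\fp = \log_p\circ\Norm_{F_\fp/\Q_p}$, and the point is that the cap product $c_{\ell,R}\cap(\kappa_{\chi,\lambda}\cap\vartheta)$ is exactly the kind of expression that computes a higher derivative of a $p$-adic $L$-function. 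Concretely, I would introduce a one-variable (or several-variable) deformation: replace the measure $\mu_{\chi,\fb,\lambda}$ by its integral against the family of characters $x\mapsto \langle x\rangle^s := \exp(s\log_p \Norm(x))$ on $F_R$, producing a $p$-adic analytic function of $s$ whose value at $s=0$ recovers the denominator (up to the smoothing factor $(1-\chi(\lambda)\l)$ appearing in Proposition~\ref{p:denom}) and whose $r$-th Taylor coefficient, after dividing by $r!$, reproduces the cap product with $c_{\ell,R}$ because each factor $c_{\ell_{\fp_i}}$ contributes one derivative in the $\fp_i$-direction.

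The key steps, in order: (i) Recall from \cite{pcsd}, \cite{cdg}, or \cite{spiess} that the Eisenstein cocycle $\kappa_{\chi,\lambda}$ integrates the family $\langle\cdot\rangle^s$ to give precisely the smoothed $p$-adic $L$-function: $\int_{F_R}\langle t\rangle^s\,d(\kappa_{\chi,\lambda}\cap\vartheta)(t) = (1-\chi(\lambda)\langle\l\rangle^{s})\cdot(\text{a unit}) \cdot L_p(\chi\omega,s)\cdot\prod_{\fp\in R_1}(1-\chi(\fp)\langle\cdot\rangle^{s}\text{-type factors})$ — here one must be careful about which Euler factors at $R_1$ and $R_0$ are present, matching the conventions in the definition of $L_p$ and in \eqref{e:ltclassical}. (ii) Expand $\langle t\rangle^s = \prod_i \exp(s\,\ell_{\fp_i}(t_i))$ as a power series in $s$; the coefficient of $s^r$ is $\frac{1}{r!}\prod_i \ell_{\fp_i}(t_i)$ plus terms with some factor repeated, which vanish under the cap product against the cocycle because $c_{\ell_{\fp_i}}\cup c_{\ell_{\fp_i}}=0$ (cup square of a degree-one class in an appropriate sense) — or more precisely, the cohomological interpretation of \eqref{e:zgdef} shows that capping with $c_{\ell,R}$ extracts exactly the multilinear term $\prod_i \ell_{\fp_i}$. (iii) Match this with $\frac{1}{r!}L_p^{(r)}(\chi\omega,0)$; since $L(\chi,0)\neq 0$ the order of vanishing is exactly $r$ by Conjecture~\ref{c:gross}(1) (proved in \cite{pcsd},\cite{spiess}), so the leading Taylor coefficient is well-defined. (iv) Divide numerator by denominator: the smoothing factor $(1-\chi(\lambda)\l)$ from Proposition~\ref{p:denom} cancels against the $s=0$ value of $(1-\chi(\lambda)\langle\l\rangle^s)$ in the numerator's family (both equal $1-\chi(\lambda)\l$ at $s=0$ since $\langle\l\rangle^0=1$), leaving the clean ratio; the sign $(-1)^{\#J}=(-1)^r$ in \eqref{e:rpandef} combines with the $(-1)^{r(n-1)}$ of Proposition~\ref{p:denom} and the sign in \eqref{e:zgdef} to give $+1$.

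The main obstacle I expect is step (i) combined with the bookkeeping in step (iv): precisely identifying the constant relating the cap product $\int_{F_R}\langle t\rangle^s\,d(\kappa_{\chi,\lambda}\cap\vartheta)$ to $L_p(\chi\omega,s)$, including all Euler-type factors at primes in $R_0$ and $R_1$ and the smoothing factor, and then verifying that upon taking the ratio and the $r$-th derivative these all collapse to exactly $\prod_{\fp\in R_1}(1-\chi(\fp))$ with the sign being $+1$. This requires carefully tracking the conventions from \cite{cdg} (Theorem 1.5) on how the Shintani cocycle evaluates $p$-adic $L$-values, and checking compatibility with the normalization of Gross's conjecture as stated in \eqref{e:g2}; one subtlety is that $L(\chi,0)$ (the value with only $S_{\ram}$ removed) appears in the denominator of both the statement and of $\sR_p(\chi)_{R,\an}$, so the Euler factors at $R\cup R_1$ that would naively appear must be shown to reduce to just the $R_1$-factors — the $R$-factors $(1-\chi(\fp))=0$ being precisely absorbed into the vanishing that creates the order-$r$ zero, which is the conceptual heart of why differentiating $r$ times is the right operation. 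Once the interpolation identity is pinned down, the rest is the power-series manipulation of step (ii)-(iii), which is routine given the cocycle formalism.
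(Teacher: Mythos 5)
Your plan — realize the cap product against the Eisenstein cocycle as a family interpolating the $p$-adic $L$-function and then extract the $r$-th Taylor coefficient — is essentially the route the paper takes. The paper implements your step (i) not by directly integrating the character $\langle \cdot\rangle^s$ but by working through the Iwasawa algebra $\Lambda = \Z_p[\![\Gamma]\!]\otimes K$: one forms the class $\rho = (\iota\circ\rec_{F_\infty/F,p})\cap(1_\cF\cap\vartheta_0)$ and proves (Prop.~4.2, relying on \cite[Prop.~5.6]{ds}) the identity $\Xi\bigl(\sum_i \chi(\fb_i)\iota(\gamma_{\fb_i})[\wmu_{\chi,\fb_i,\lambda}]\cap\rho\bigr)(s) = (1-\chi(\lambda)\gamma_\lambda^s\l)L_p(\chi,s)$. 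Your step (ii), however, is not the right mechanism: the paper does not argue via vanishing of cup squares $c_{\ell_{\fp_i}}\cup c_{\ell_{\fp_i}}=0$, and your coefficient computation is off (the coefficient of $s^r$ in $\exp(s\sum_i\ell_{\fp_i}(t_i))$ contributes the pure multilinear term $\prod_i\ell_{\fp_i}(t_i)$ with coefficient $1$, not $1/r!$; the $1/r!$ in the Theorem comes from the Taylor coefficient of $L_p$). What actually does the work is the truncation homomorphism $\Ta_{\le r}\colon C^{\an}(\Z_p,K)\to K[X]/(X^{r+1})$ together with the cohomological identity \cite[Prop.~3.6]{ds}, which shows that the truncated class $\overline\rho$ equals $(-1)^r c_{\ell,J}\cap((\barX^r 1_{\cO_{R'}^*})\cap\vartheta)$ up to sign. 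That identity is the concrete realization of your heuristic that ``each $c_{\ell_{\fp_i}}$ contributes one derivative,'' and it is the step you would need to prove or cite to make the argument rigorous.

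One further remark: you worry in step (iv) about ``Euler factors at $R\cup R_1$'' needing to ``reduce to just the $R_1$-factors.'' In fact the correct product is over $R_0$, not $R_1$; this is what Prop.~\ref{p:denom} gives (the denominator is $(-1)^{r(n-1)}(1-\chi(\lambda)\l)L(\chi,0)\prod_{\fp\in R_0}(1-\chi(\fp))$), it is consistent with Conjecture~\ref{c:gross}(\ref{e:g2}) and with the statement of the theorem in the Introduction, and it drops out automatically once the interpolation formula is in place — nothing special happens at $R_1$. The ``$R_1$'' in the body statement of Theorem~\ref{t:jr} is a typo; your confusion on this point is therefore not your fault, but it is a sign that you were trying to engineer an outcome rather than tracing the Euler factors through Prop.~\ref{p:denom}.
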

 
Before proving Theorem~\ref{t:jr}, we must first relate the Eisenstein cocycle to the $p$-adic $L$-function of $\chi$.
For this,  we first need to extend the definition of $\mu_{\chi,\fb, \lambda}$ so that it involves all primes of $F$ above $p$. Put $R'= R_0\cup R_1$ and $\cO_{R'}^* = \prod_{\fp\in R'} \cO_{\fp}^*$. Instead of considering only compact open subsets of $F_R$ we may consider more generally compact open subsets $U$ of $F_R\times \cO_{R'}^*$ in the definition of $L(C, \chi, \fb, U, s)$.  As before we obtain a homogeneous ($n-1$)-cocycle  $\wmu_{\chi, \fb, \lambda}$ with values in $\Meas(F_R\times \cO_{R'}^*, K)$ that is mapped to $[\mu_{\chi, \fb,\lambda}]$
under the map 
\[
H^{n-1}(E_R^*, \Meas(F_R\times \cO_{R'}^*, K))\lra H^{n-1}(E_R^*, \Meas(F_R, K))
\]
induced by $\pi_*: \Meas(F_R\times \cO_{R'}^*, K)\to \Meas(F_R, K)$, where 
$\pi: F_R\times \cO_{R'}^*\to F_R$ denotes the projection. 

Let $F_{\infty}/F$ be the cyclotomic $\Z_p$-extension of $F$, and let 
\[ \Gamma=\Gal(F_{\infty}/F), \qquad \Lambda=\Z_p[\![\Gamma]\!]\otimes_{\Z_p} K. \] The action of $\Gamma$ on $p$-power roots of unity allows us to view $\Gamma$ as a subgroup of $1+2p\Z_p$. For $\gamma\in \Gamma$ we denote by $\iota(\gamma)$ the corresponding element in $\Lambda$. We view the reciprocity map of class field theory for the extension $F_{\infty}/F$ as a map 
\begin{equation}
\rec: F_p^*\times \sI^p = \prod_{\fp\mid p} F_{\fp}^*\times \sI^p \lra \Gamma 
\label{e:rec}
\end{equation} 
where $\sI^p= \sI^{S_p}$ denotes the group of fractional ideals of $F$ that are relatively prime to $p$. The restriction of (\ref{e:rec}) to $\sI^p$ will be denoted by 
\begin{equation}
\sI^p \lra \Gamma,\,\, \fa \mapsto \gamma_{\fa}
\label{e:recartin}
\end{equation} 
and to $F_R^*\times \cO_{R'}^*\subseteq F_p^*$ by 
\begin{equation}
\rec_{F_{\infty}/F, p}: F_R^*\times \cO_{R'}^*\lra \Gamma.
\label{e:recp}
\end{equation} 
We can view $\iota \circ\rec_{F_{\infty}/F, p}$ as an element 
\begin{equation} \label{e:iotarec}
\iota \circ\rec_{F_{\infty}/F, p} \in H^0(E_R^*, C(F_R^*\times \cO_{R'}^*, \Lambda)).
\end{equation} 

Let $\cF$ denote a compact open subset of $F_R^*\times \cO_{R'}^*$ that is stable under the group of totally positive units of $F$ and such that $F_R^*\times \cO_{R'}^*$ is the disjoint union of the cosets $x\cF$ where $x$ runs through a system of representatives of $E_R^*/E^*$.  As before let $\vartheta_0$ denote a generator of $H_{n-1}(E^*, \Z)$.  As in (\ref{e:1fc}), we can consider the element 
\begin{equation} \label{e:2fc}
1_{\cF} \cap \vartheta_0 \in H_{n-1}(E_+, C(\cF, \Z)) \cong H_{n-1}(E_R^*, C_c(F_R^*\times \cO_{R'}^*,\Z)),
\end{equation}
where the isomorphism is by Shapiro's Lemma since 
 \[ \Ind_{E^*}^{E_R^*} C(\cF, \Lambda) \cong C(F_R^*\times \cO_{R'}^*, \Lambda). \]

Taking the cap product of (\ref{e:iotarec}) and (\ref{e:2fc}) yields a 
 class 
\begin{equation}
\rho = (\iota\circ \rec_{F_{\infty}/F, p} )\cap (1_{\cF} \cap \vartheta_0)\in H_{n-1}(E_R^*, C_c(F_R\times \cO_{R'}^*,\Lambda)).
\label{e:intcohom2}
\end{equation}

The first cap-product is induced by the pairing
\begin{align*}
C(F_R^*\times \cO_{R'}^*,\Lambda) \times C_c(F_R^*\times \cO_{R'}^*,\Z) & \lra C_c(F_R\times \cO_{R'}^*,\Lambda), \\
 (f,g) &\longmapsto (f\cdot g)_!
\end{align*}
where the subscript $!$ denotes ``extension by zero". Similar to (\ref{e:cap1}) we have a cap-product pairing
\[
H^{n-1}(E_R^*, \Meas(F_R\times \cO_{R'}^*, K)) \times H_{n-1}(E_R^*, C_c(F_R\times \cO_{R'}^*,\Lambda))\lra \Lambda,
\]
so we can consider $[\wmu_{\chi, \fb, \lambda}] \cap \rho \in \Lambda$. To link this element of the Iwasawa algebra to the $p$-adic $L$-function $L_p(\chi,s)$ we recall that there exists a canonical homomorphism 
\begin{equation}
\Xi: \Lambda \lra C^{\an}(\Z_p, K)
\label{e:iwasfunc}
\end{equation} 
characterized by \[ \Xi(\iota(\gamma))(s) = \gamma^s:=\exp_p(s\log_p(\gamma)) \] for all $\gamma\in \Gamma$ and $s\in \Z_p$. Here $C^{\an}(\Z_p, K)$ is the $K$-algebra of locally analytic maps $f\colon\Z_p\lra K$. 

\begin{prop}
We have 
\begin{equation}
\Xi\left(\sum_{i=1}^h \chi(\fb_i) \iota(\gamma_{\fb_i}) [\wmu_{\chi, \fb_i, \lambda}] \cap \rho\right)(s) \, =\, (1-\chi(\lambda)\gamma_{\lambda}^s \l) L_p(\chi,s).
\label{e:cap2}
\end{equation}
\end{prop}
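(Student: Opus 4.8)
The plan is to unwind both sides of \eqref{e:cap2} into explicit sums of special values of Shintani zeta functions and compare them term by term using the interpolation property of $L_p(\chi\omega,s)$ at nonpositive integers, together with the density of $\Z^{\le 0}$ in $\Z_p$ and the fact that both sides are given by $p$-adic analytic (in fact Iwasawa-theoretic) functions of $s$. Concretely, since $\Xi$ is a $K$-algebra homomorphism with $\Xi(\iota(\gamma))(s)=\gamma^s$, it suffices to check the identity after applying $\Xi$ and evaluating at $s=k$ for all sufficiently negative integers $k$; both sides are then values of elements of $\Lambda$ under $\Xi$, hence determined by these values.

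First I would compute the left side at $s=k\in\Z^{\le 0}$. Cap product of $[\wmu_{\chi,\fb_i,\lambda}]$ with $\rho=(\iota\circ\rec_{F_\infty/F,p})\cap(1_\cF\cap\vartheta_0)$ unwinds, via the explicit description of the cocycle on a signed Shintani domain $\cD=\sum_\nu a_\nu C_\nu$ for the action of $E^*$ (exactly as in the proof of Proposition~\ref{p:denom}, via \cite[Theorem 1.5]{cdg}), into a sum
\[
\sum_\nu \sum_{i} a_\nu\,\chi(\fb_i)\,\iota(\gamma_{\fb_i})\int_{F_R\times\cO_{R'}^*}(\iota\circ\rec_{F_\infty/F,p})(t)\,d\mu_{\chi,\fb_i,\lambda}(C_\nu,\,\cdot\,)(t).
\]
Applying $\Xi$ and evaluating at $s=k$, the factor $\Xi(\iota(\gamma_{\fb_i})\cdot\iota\circ\rec_{F_\infty/F,p}(\xi))(k)$ becomes $\langle\N\fb_i\rangle^k\langle\N(\xi)\rangle^k$ where $\langle\cdot\rangle$ is the projection to $1+2p\Z_p$, i.e. exactly the $\omega^{-k}$-twisting that converts $\chi$ into $\chi\omega^k$ and $\N\xi^{-k}$ into the archimedean factor. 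Here one uses that for $\xi$ an $S$-integer coprime to the relevant data, $\rec_{F_\infty/F,p}(\xi)$ recovers $\gamma_{(\xi)}$ up to the contributions at $p$, and that $\log_p\circ\N_{F_\fp/\Q_p}$ on units gives the cyclotomic character. The smoothing factor $1-\chi(\lambda)\l^{1-s}$ present in $L_\lambda$ combines with the $\iota(\gamma_\lambda)$ weighting to produce the factor $1-\chi(\lambda)\gamma_\lambda^s\l$ on the right. After this bookkeeping, the resulting sum of smoothed Shintani values at $0$ for the character $\chi\omega^k$ is, by the analogue of \eqref{e:shinlchi} (Cassou-Nogu\`es/Shintani), equal to $(1-\chi(\lambda)\gamma_\lambda^k\l)\,L_{S_{\ram}}(\chi\omega^k, k)=(1-\chi(\lambda)\gamma_\lambda^k\l)\,L_p(\chi\omega,k)$, which matches the right side evaluated at $s=k$.

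The main obstacle I expect is the careful matching of the $p$-adic twisting factors: one must check that the element $\iota\circ\rec_{F_\infty/F,p}\in H^0(E_R^*,C(F_R^*\times\cO_{R'}^*,\Lambda))$, when integrated against the measure and specialized by $\Xi$ at $k$, produces precisely $\langle\N_{F_p/\Q_p}(\xi)\rangle^k$ on the relevant lattice points $\xi$, so that the combination of the ``partial'' zeta values over the ideal classes $\fb_i$ with the extra Euler-type factors at the primes in $R'=R_0\cup R_1$ (coming from the enlargement of the domain from $F_R$ to $F_R\times\cO_{R'}^*$) assembles into the full $S$-imprimitive $L$-value $L_S(\chi\omega^k,k)$ rather than some primitive-to-$R'$ variant. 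This is where the choice of $\cF$ (stable under totally positive units, with $F_R^*\times\cO_{R'}^*$ a disjoint union of $E_R^*/E^*$-translates) and the compatibility of the Shapiro isomorphism in \eqref{e:2fc} with the Shintani-domain computation must be tracked with care. Once the $s=k$ identity is established for all $k\ll 0$, the general identity follows since $\Xi$ of an element of $\Lambda$ is determined by its values on $\Z^{\le 0}$ (a dense set in $\Z_p$ on which both sides are analytic), completing the proof.
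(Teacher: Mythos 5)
Your overall strategy---both sides land in (the image of) $\Lambda$ under $\Xi$, so it suffices to check equality on a Zariski/density-rich family of specializations---is the same as the paper's. The difference is in the choice of specializations. You evaluate at $s=k$ for $k\in\Z^{\le 0}$, i.e.\ you specialize along the ``horizontal'' direction and invoke the interpolation $L_p(\chi\omega,k)=L_S(\chi\omega^k,k)$. The paper instead specializes along the ``vertical'' direction: it applies to $\Lambda$ the algebra homomorphisms $\tilde\psi$ induced by $p$-power conductor characters $\psi\colon\Gamma\to\mu_{p^\infty}$, and must then check the identity
\[
\tilde{\psi}\Bigl(\sum_{i=1}^h \chi(\fb_i) \iota(\gamma_{\fb_i}) [\wmu_{\chi, \fb_i, \lambda}] \cap \rho\Bigr) =
(1-\chi\psi(\lambda) \l) L_S(\chi \psi,0),
\]
which is the statement of \cite[Prop.\ 5.6]{ds} specialized to $k=0$ with the character $\chi\psi$. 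Both families of homomorphisms have trivial common kernel, so both routes are legitimate.

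The paper's choice is the cleaner one for the following reason, which your write-up glosses over. When you apply $\tilde\psi$, the test function $\psi\circ\rec_{F_\infty/F,p}$ is locally constant on $F_R^*\times\cO_{R'}^*$ (it factors through a finite quotient of $\Gamma$), so the cap product against the measure reduces to a \emph{finite} sum of Shintani values at $s=0$, and the comparison with $L_S(\chi\psi,0)$ is an algebraic identity among special values, exactly as in the proof of Proposition~\ref{p:denom}. By contrast, $\Xi(\iota\circ\rec_{F_\infty/F,p})(k)=\rec(\cdot)^k$ for $k<0$ is continuous but \emph{not} locally constant, so the integral against the Shintani measure is a genuine $p$-adic limit, and the identification of that limit with the Shintani $L$-value at $s=k$ for the twisted character $\chi\omega^k$ is not a ``term-by-term comparison'' of finite sums---it \emph{is} the interpolation property of the Cassou-Nogu\`es/Deligne-Ribet measure, i.e.\ the content of the reference you would anyway have to invoke. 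So to make your route airtight you would still need to cite \cite[Prop.\ 5.6]{ds} (or reprove its $k\le 0$, $\psi=1$ case), at which point the two proofs converge; the paper's choice simply picks the slice of that proposition where the pairing is literally a finite sum.

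One small remark: the factor $\Xi(\iota(\gamma_{\fb_i}))(k)$ is $\gamma_{\fb_i}^k$, a principal unit raised to the $k$-th power, not $\langle\N\fb_i\rangle^k$ until you have fixed the normalization of the reciprocity map (\ref{e:recartin}) precisely; and the ``$\omega^{-k}$-twist'' you mention should be traced through the writing $\N\xi^k=\omega(\N\xi)^k\langle\N\xi\rangle^k$ carefully, since $\rec$ lands in $\Gamma\subset 1+2p\Z_p$ and carries no Teichm\"uller component. These are bookkeeping points and do not affect the validity of the argument, but they are exactly where errors creep in, which is another reason the paper's $k=0$ specialization (which kills these factors) is preferable.
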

\begin{proof}
This formula is a variant of \cite[Prop.\ 5.6]{ds} and the proof there carries over.  In fact, as we now explain,
the present result can be deduced from the statement of {\em loc.\ cit.}
It is well known that the $p$-adic $L$-function interpolates to an element of $\Lambda$, i.e.\ that there exists a unique element
\[ \sL_{p, \lambda}(\chi) \in \Lambda \] such that 
\[ \Xi(\sL_{p, \lambda}(\chi)) =  (1-\chi(\lambda)\gamma_{\lambda}^s \l) L_p(\chi,s). \]
We must show that the expression in parenthesis on the left side of (\ref{e:cap2}) is equal to $\sL_{p, \lambda}(\chi)$, and for this if suffices to show that they agree under application of the dense set of homomorphisms 
\[ \tilde{\psi} \colon \Lambda \lra \C_p^* \]
induced by $p$-power conductor Dirichlet characters $\psi: \Gamma \lra \mu_{p^{\infty}}$. (These homomorphisms are ``dense" in the sense that the intersection of their kernels in $\Lambda$ is trivial.)  In other words, we must show that
\begin{equation}
\label{e:lpinterp}
 \tilde{\psi}\left(\sum_{i=1}^h \chi(\fb_i) \iota(\gamma_{\fb_i}) [\wmu_{\chi, \fb_i, \lambda}] \cap \rho\right) = 
(1-\chi\psi(\lambda) \l) L_S(\chi \psi,0).
\end{equation}
Now if we let $K$ be the fixed field of $\chi\psi$, set $k=0$, and apply the character $\chi\psi$ to the equation
in \cite[Prop.\ 5.6]{ds}, then we obtain exactly (\ref{e:lpinterp}).
\end{proof}

\begin{proof}[Proof of Theorem~\ref{t:jr}]
By Prop.\ \ref{p:denom} it suffices to show
\begin{equation}
c_{\l, J} \cap (\kappa_\chi \cap \vartheta) =  (-1)^{rn} (1 - \chi(\lambda) \l) L_p^{(r)}(\chi, 0)/r!.  \label{e:cellcap}
 \end{equation}

In order to study the leading term of (\ref{e:cap2}) at $s=0$, we consider the homomorphism of $K$-algebras
\begin{align*}
\Ta_{\le r}: C^{\an}(\Z_p, K) \, &\lra \, K[X]/(X^{r+1}),\\
 f&\longmapsto \Ta_{\le r} f = \sum_{k=0}^r \frac{f^{(k)}(0)}{k!} \barX^k
\end{align*}
and the composite 
\begin{equation} \label{e:composite}
\Ta_{\le r} \circ \ \Xi \circ \iota \circ \rec_{F_{\infty}/F, p}: F_R^*\times \cO_{R'}^*\lra  \Gamma \lra \Lambda \lra C^{\an}(\Z_p, K) \lra K[X]/(X^{r+1}).
\end{equation}
Restricting (\ref{e:composite}) to $F_{\fp}^*$ for $\fp\in R$ and reducing  modulo $(\barX^2)$ yields the homomorphism 
\begin{equation} \label{e:lpclass}
F_{\fp}^* \lra K[X]/(X^2), \qquad a\longmapsto 1- \l_{\fp}(a) \barX.
\end{equation}
As before we consider the class
\[
\overline{\rho}:=(\Ta_{\le r}\circ \ \Xi \circ \iota)\cap (1_{\cF} \cap \vartheta_0)\in H^{n-1}(E_R^*, C_c(F_R\times \cO_{R'}^*,K[X]/(X^{r+1}))).
\]
Applying \cite[Prop.\ 3.6]{ds} we obtain  
\begin{equation} \label{e:rhobar}
\overline{\rho} \,=\, (-1)^r c_{\l, J}\cap ((\barX^r 1_{\cO_{R'}^*})\cap \vartheta).
\end{equation}
(To make the connection with the notation in {\em loc.\ cit.}, note that our $\overline{\rho}$ and $1_{\cO_{R'}^*}\cap \vartheta$ correspond to $\kappa$ and $\overline{\kappa}$ there, and that in view of (\ref{e:lpclass}) our $(-1)^r c_{\l, J} \overline{X}^r$ corresponds to $c_{d\chi_1} \cup \cdots \cup c_{d\chi_r}$ there.)
In (\ref{e:rhobar}), the second cap-product lies in $H_{n+r-1}(E_R^*, C(\cO_{R'}^*, K[X]/(X^{r+1})))$. Therefore
\begin{align}
(\Ta_{\le r}\circ \ \Xi)([\wmu_{\chi, \fb, \lambda}] \cap \rho) & =  [\wmu_{\chi, \fb, \lambda}] \cap \overline{\rho} \nonumber \\
&= (-1)^r (-1)^{(n-1)r} \, (c_{\l, J} \cup  (\barX^r 1_{\cO_{R'}^*}) \cup [\wmu_{\chi, \fb, \lambda}]) \cap \vartheta \nonumber \\
 & =  (-1)^{nr}\barX^r \, c_{\l, J} \cap ([\mu_{\chi, \fb, \lambda}] \cap \vartheta). \label{e:tar}
\end{align}
Combining (\ref{e:cap2}) and (\ref{e:tar}), we obtain
\begin{align}
 \Ta_{\le r}((1-\chi(\lambda)\gamma_{\lambda}^s \l) L_p(\chi,s))  &=
 \sum_{i=1}^h \chi(\fb_i) \Ta_{\le r} \circ \ \Xi (\iota(\gamma_{\fb_i}) [\mu_{\chi, \fb_i, \lambda}] \cap \rho) \nonumber \\
&=  
 (-1)^{nr}\barX^r \sum_{i=1}^h \chi(\fb_i) (\Ta_{\le r}\circ \Xi)(\iota(\gamma_{\fb_i})) c_{\l, J} \cap ([\mu_{\chi, \fb, \lambda}] \cap \vartheta)\nonumber \\
&=  (-1)^{nr}\barX^r \sum_{i=1}^h \chi(\fb_i) c_{\l, J} \cap ([\mu_{\chi, \fb_i, \lambda}] \cap \vartheta) \label{e:1mod} \\
&=  (-1)^{nr}\, \barX^r  c_{\l, J} \cap (\kappa_{\chi, \lambda}\cap \vartheta) \label{e:key}
\end{align}
where (\ref{e:1mod}) follows from $(\Ta_{\le r}\circ \ \Xi)(\iota(\gamma_{\fb_i})) \equiv 1$ modulo $\barX$. Since
\[
\Ta_{\le r}(1-\chi(\lambda)\, \gamma_{\lambda}^s \,\l) \equiv 1 - \chi(\lambda) \l  \pmod{\overline{X}}
\]
we see that $\Ta_{\le r}(1-\chi(\lambda)\, \gamma_{\lambda}^s\, \l)$ is a unit in $K[X]/(X^{r+1})$. Hence 
\[ \Ta_{\le r}(L_p(\chi,s))\equiv 0  \pmod{\barX^r} \] and 
\[
\Ta_{\le r}((1-\chi(\lambda)\, \gamma_{\lambda}^s\, \l) L_p(\chi,s)) = (1 - \chi(\lambda) \l) \, L_p^{(r)}(\chi, 0)/r! \,\barX^r.
\]
Together with (\ref{e:key}) we conclude (\ref{e:cellcap}). \end{proof}
 
\begin{remark} \rm We would like to point out that Proposition \ref{p:denom} and formula 
(\ref{e:cellcap}) could be deduced directly from \cite[Corollary 3.19(b)]{spiesshilb} and \cite[Corollary 3.22]{spiesshilb}. However we feel that the framework developed in \cite[\S 3] {ds} is somewhat superior to that of \cite[\S 4]{spiesshilb} and think it is worthwhile to present the application to trivial zeros of $p$-adic $L$-functions here again in some detail.
\end{remark}

\subsection{The diagonal entries}
\label{ss:diag}
 
 We now consider the other extremal case $\#J = 1$.  If $J = \{ \fp \}$, then
\[ \sR_p(\chi)_J =  \sL_{\alg}(\chi)_{ \fp, \fp} = - \frac{\l_\fp(u_{\fp, \chi})}{o_{\fp}(u_{\fp, \chi})}. \]
In this setting, the first named author \cite[Conjecture 3.21]{das} had previously conjectured a formula for the image of $u_{\fp, \chi}$ in $F_\fp^* \otimes K$.
We recall below the definition of this conjectural image, denoted $\cU_{\fp, \chi}$.

\begin{theorem} \label{t:jep}  When $n=2$, Conjecture~\ref{c:main} for $J = \{\fp\}$ is consistent with \cite[Conjecture 3.21]{das}, i.e.\ we have
\[ \sR_p(\chi)_{J, \an} =  - \frac{\l_\fp(\cU_{\fp, \chi})}{o_{\fp}(\cU_{\fp, \chi})}.
\]
\end{theorem}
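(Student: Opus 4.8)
The plan is to reduce both sides to explicit finite sums of Shintani zeta values and match them. The key tool is Proposition~\ref{p:Jvariant}: for $J=\{\fp\}$ (so $s=\#J=1$) it gives
\[
 \sR_p(\chi)_{J,\an} = -\,\frac{c_{\l}^{J}\cap (\omega_{\chi,\lambda}^{J}\cap\vartheta')}{c_{o}^{J}\cap (\omega_{\chi,\lambda}^{J}\cap\vartheta')},
\]
where $c_{\l}^{J}=c_{\ell_\fp}$ and $c_{o}^{J}=c_{o_\fp}$ are the classes of \eqref{e:zgdef} attached to $\ell_\fp$ and $o_\fp$, and $\omega_{\chi,\lambda}^{J}$ is the $\chi$-twisted, $\lambda$-smoothed Shintani cocycle on the group $E(\ff)_J$ of totally positive $\fp$-units of $F$ congruent to $1$ modulo $\ff$. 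This is exactly the group — of rank $n+s-1=n$ — and exactly the measure-valued cocycle underlying the construction of $\cU_{\fp,\chi}$ in \cite[\S 3]{das}: there $\cU_{\fp,\chi}$ is built as a multiplicative integral over a Shintani domain for $E(\ff)_J$ from the same smoothed Shintani values $\zeta_\lambda$, and its invariants $o_\fp(\cU_{\fp,\chi})$ and $\ell_\fp(\cU_{\fp,\chi})$ are themselves given by explicit finite sums of such $\zeta_\lambda$-values weighted by $o_\fp$, respectively by $\log_p\Norm_{F_\fp/\Q_p}$, of cone generators. So it suffices to identify the denominator above with $o_\fp(\cU_{\fp,\chi})$ and the numerator with $\ell_\fp(\cU_{\fp,\chi})$ up to one common nonzero scalar $C$ (which, by Propositions~\ref{p:denom} and~\ref{p:Jvariant} together with the normalization of $\cU_{\fp,\chi}$, is forced to be $\pm 1$, but whose exact value is irrelevant to the ratio); the quotient then yields the theorem.

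I would start with the denominator. Pick a signed Shintani domain $\cD=\sum_\nu a_\nu C(x_{1,\nu},\dots,x_{n,\nu})$ for the action of $E(\ff)_J$ on $(\R^{>0})^n$ whose cone generators are simultaneously totally positive, congruent to $1$ modulo $\ff$, and units at both $\fp$ and $\lambda$; this is possible by the usual perturbation arguments. Then $\cD$ represents $\vartheta'\in H_{n+s-1}(E(\ff)_J,\Z)$ on the chain level, hence also $\omega_{\chi,\lambda}^{J}\cap\vartheta'\in H_{n-1}(E(\ff)_J,\Meas(F_\fp,K))$. Pairing this class, via the analogue of \eqref{e:cap1}, with the cocycle $c_{o_\fp}$ (represented by $z_{f,o_\fp}$ for $f=1_{\cO_\fp}$) and unwinding the definitions turns the cap product into a finite sum, over the cones of $\cD$ and over representatives $\fb$ of $G_\ff$ with weight $\chi(\fb)$, of terms $a_\nu\,\sgn(x_\nu)\,o_\fp(y_\nu)\,\zeta_\lambda(C^*(x_{1,\nu},\dots,x_{n,\nu}),\fb,\cdot,0)$ — precisely the expression computing the $\fp$-order of the multiplicative integral in \cite[\S 3]{das}. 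This yields $c_{o}^{J}\cap(\omega_{\chi,\lambda}^{J}\cap\vartheta')=C\cdot o_\fp(\cU_{\fp,\chi})$.

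For the numerator I would run the same computation with $\ell_\fp$ in place of $o_\fp$. The difference is that $\ell_\fp=\log_p\circ\Norm_{F_\fp/\Q_p}$ does not extend continuously to $0\in F_\fp$, so $c_{\ell_\fp}$ is represented only by the inhomogeneous cocycle $z_{f,\ell_\fp}$ of \eqref{e:zgdef}, with its corrective term $(f-af)\cdot\ell_\fp$. Unwinding the cap product with $z_{f,\ell_\fp}$ over the same Shintani domain produces the additive analogue of the multiplicative integral defining $\cU_{\fp,\chi}$, namely $\log_p\Norm_{F_\fp/\Q_p}$ of that same integral, i.e.\ $\ell_\fp(\cU_{\fp,\chi})$, and hence $c_{\l}^{J}\cap(\omega_{\chi,\lambda}^{J}\cap\vartheta')=C\cdot\ell_\fp(\cU_{\fp,\chi})$ with the same $C$. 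Dividing and invoking Proposition~\ref{p:Jvariant} gives $\sR_p(\chi)_{J,\an}=-\ell_\fp(\cU_{\fp,\chi})/o_\fp(\cU_{\fp,\chi})$, as claimed.

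The hard part is this dictionary between the cohomological packaging here — cap products of $c_{o_\fp}$, $c_{\ell_\fp}$, $\omega_{\chi,\lambda}^{J}$, $\vartheta'$ — and the concrete multiplicative-integral / Shintani-cone formulas of \cite{das}. One must pin down mutually compatible explicit cocycle and cycle representatives, keep careful track of the narrow-ray-class decomposition $\omega_{\chi,\lambda}^{J}=\sum_{[\fb]\in G_\ff}\chi(\fb)\,\omega_{\ff,\fb,\lambda}^{J}$, reconcile the two normalizations of the auxiliary smoothing, and — the most delicate step — verify that the corrective term of \eqref{e:zgdef} reproduces exactly the $p$-adic logarithm of the multiplicative integral, contributing no spurious boundary terms. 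The hypothesis $n=2$ enters only because \cite[Conjecture 3.21]{das} is stated for real quadratic $F$ (where $E(\ff)_J$ has rank $2$ and one uses the classical continued-fraction decomposition of cones); the cap-product degrees and the argument sketched here are valid for all $n$, so the same proof should go through once the construction of \cite{das} is extended to general totally real fields.
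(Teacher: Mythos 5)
Your high-level strategy matches the paper's: reduce via Proposition~\ref{p:Jvariant} to computing $c^J_g \cap (\omega^J_{\chi,\lambda}\cap\vartheta')$ for $g\in\{\ell_\fp,o_\fp\}$, then match with $g(\cU_{\fp,\chi})$. But your chain-level description contains a genuine error that obscures the actual computation. There is no signed Shintani domain for the action of $E(\ff)_J$ on $(\R^{>0})^n$: $E(\ff)_J$ has rank $n+s-1=n$, yet its archimedean image in $(\R^{>0})^n$ is not a lattice (the $\fp$-unit direction is invisible at the infinite places), so the action is not properly discontinuous. The $\cD$ appearing in \S\ref{ss:diag} and in \cite{das} is a Shintani domain for $E(\ff)$, of rank $n-1$, and it naturally represents a class one degree below $\vartheta'$; the assertion that ``$\cD$ represents $\vartheta'$ on the chain level'' therefore breaks. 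To get $\vartheta'\in H_n(E(\ff)_J,\Z)$ for $n=2$ one must bring in the extra $\fp$-unit $\pi$ with $\fp^e=(\pi)$ and, after arranging $\pi\in\cD$, take the explicit $2$-cycle $[\pi\,|\,\epsilon]-[\epsilon\,|\,\pi]$. It is the pairing of this cycle against the inhomogeneous cocycle $z_{1_{\pi\cO_\fp},g}$ --- in particular the identity $\pi^{-1}z(\pi)=1_{\cO_\fp-\pi\cO_\fp}\cdot g+1_{\cO_\fp}\cdot g(\pi)$ together with the cone-intersection computation for the $z(\epsilon)$ term --- that yields, term by term, $g$ applied to each of the three factors of $\cU_\fp(\fb,\lambda,\cD)$: the multiplicative integral, the power $\pi^{\nu_\cD(\cO_\fp)}$, and the global unit $\epsilon_{\fb,\lambda,\cD,\pi}$. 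Your proposed ``finite sum $a_\nu\,\sgn(x_\nu)\,o_\fp(y_\nu)\,\zeta_\lambda(\cdots)$'' over cones of $\cD$ reaches at best the first factor; the other two come precisely from the $\pi$-direction of $\vartheta'$ and from the corrective term of \eqref{e:zgdef}, which is exactly the step you flag as delicate but do not carry out.

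A further small correction: the restriction to $n=2$ is not because \cite[Conjecture 3.21]{das} is stated only for real quadratic fields --- $\cU_{\fp,\chi}$ is defined in \S\ref{ss:diag} for arbitrary $n$ --- but because the explicit $2$-cycle representing $\vartheta'$ and the resulting bookkeeping are carried out here only for rank-one $E(\ff)$, as the paper's own remark after Theorem~\ref{t:jep} indicates.
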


\begin{remark}  We expect Theorem~\ref{t:jep} to be tractable when $n > 2$ as well, but  we leave this as an open problem.
\end{remark}

Before proving Theorem~\ref{t:jep}, we recall the definition of $\cU_{\fp, \chi}$. We keep the notation of the end of \S \ref{section:mainconj}. Let $\cD = \sum_i a_i C_i$ denote a signed Shintani domain for the action of $E(\ff)$ on $(\R^{>0})^n$, so the characteristic function $1_\cD := \sum_i a_i 1_{C_i}$ satisfies $\sum_{\epsilon \in E(\ff)}1_{\cD}( \epsilon \cdot x) = 1$ for all $x \in (\R^{>0})^n$.
 For each fractional ideal $\fb$ of $F$ relatively prime to $\ff$ and $p$, we will define an element $\cU_\fp(\fb, \lambda, \cD) \in F_\fp^*$ and define
 \begin{equation} \label{e:upcdef}
  \cU_{\fp, \chi} = \sum_{[\fb] \in G_\ff}  \cU_\fp(\fb, \lambda, \cD) \otimes \chi(\fb)/(1-\chi(\lambda)\l), 
  \end{equation}
   where the sum ranges over a set of representatives $\fb$ for $G_\ff$. The independence of $ \cU_{\fp, \chi}$ from the choices of the $\fb$, $\cD$, and $\lambda$ is somewhat subtle and is discussed  in \cite[\S 5]{das}.

We now define $\cU_\fp(\fb, \lambda, \cD).$ 
Let $e$ be the order of $\fp$ in $G_\ff$, and write $\fp^e = (\pi)$ where $\pi$ is totally positive and $\pi \equiv 1 \pmod{\ff}$.  
For a compact open subset of $F_{\fp}$ we define 
 \[ 
\nu_{\fb, \lambda, \cD}(U) := \sum_i a_i \zeta_\lambda(\fb, C_i, U, 0), \qquad \text{ where } \cD = \sum a_i C_i. 
\]
where $\zeta_\lambda(\fb, C_i, U, s)$ denotes the function (\ref{e:szf2}). 

Our assumptions on $\lambda$ imply $\nu_{\fb, \lambda, \cD}(U)\in \Z$ (see \cite[Proposition 3.12]{das}). The main contribution to the definition of $\cU_\fp(\fb, \lambda, \cD)$ is a multiplicative integral defined analogously to (\ref{e:intdef}), but with Riemann {\em products} instead of sums:
\[ \mint_{\cO_\fp - \pi \cO_\fp} x \ d\nu_{\fb, \lambda, \cD}(x) := \lim_{|| \cV || \rightarrow 0} \prod_{V \in \cV} t_V^{\nu_{\fb, \lambda, \cD}(V)},
\]
as $\cV = \{V\}$ ranges over uniformly finer covers of $\cO_\fp - \pi \cO_\fp$ and $t_V \in V$.

The element $\cU_\fp(\fb, \lambda, \cD) \in F_\fp^*$ is defined as the product of this multiplicative integral with a certain global unit in $F$ and a power of $\pi$.
Given a formal linear combination of simplicial cones $\cD = \sum a_i C_i$ and a totally positive $x \in F^*$, we define $x\cD = \sum a_i \cdot x  C_i$, with characteristic function $1_{x \cD}(y) = \sum a_i 1_{C_i}(x^{-1} y)$.  Given two such formal linear combinations, we define their intersection as the formal linear combination whose characteristic function is the product:
\[ 1_{\cD \cap \cD'} := 1_{\cD} \cdot 1_{\cD'}. \]
With these notations, we define
\begin{equation} \label{e:edef}
 \epsilon_{\fb, \lambda, \cD, \pi} := \prod_{\epsilon \in E(\ff)} \epsilon^{\nu_{\fb, \lambda, \epsilon \cD \cap \pi^{-1} \cD}(\cO_\fp)}. 
 \end{equation}
One easily shows that there are only finitely many $\epsilon$ for which the exponent in (\ref{e:edef}) is nonzero.
Finally, we define
\[ \cU(\fb, \lambda, \cD) :=  \epsilon_{\fb, \lambda, \cD, \pi} \cdot  \pi^{\nu_{\fb, \lambda, \cD}(\cO_\fp)}  \cdot \mint_{\cO_\fp - \pi \cO_\fp} x \ d\nu_{\fb, \lambda, \cD}(x)\]
and $\cU_{\fp, \chi}$ as in (\ref{e:upcdef}).

We assume now that $n=2$. Recall that we have we have fixed an ordering of the real places of $F$  yielding an embedding $F\subset \R^2$. We choose the generator $\ep$ of $E(\ff)$ so that it lies in the half plane $\{(x,y)\in \R^2\mid x<y\}$. Then we have
\[
\sgn(1, \ep)=1 \qquad \mbox{and} \qquad C^*(1, \ep) = C(1,\ep) \cup C(\ep)
\]
and $\cD = C^*(1, \ep)$ is a Shintani domain for the action of $E(\ff)$ on $(\R^{>0})^2$. For the cocycle (\ref{e:nupartf}) evaluated at the pair $(1, \ep)$ we have 
\begin{equation}
\label{e:jep1}
\nu_{\fb, \lambda}^J(1, \ep) = \nu_{\fb, \lambda, \cD}
\end{equation}
Now Theorem \ref{t:jep} follows immediately from 

\begin{prop} 
\label{p:partgs}  When $n=2$ and $J = \{\fp\}$ and let $[\fb]\in G_{\ff}$. We have 
\[ 
\l_\fp( \cU_\fp(\fb, \lambda, \cD)) = \pm c_{\l_\fp} \cap (\omega_{\ff, \fb, \lambda}^J\cap \vartheta'),\qquad o_\fp( \cU_\fp(\fb, \lambda, \cD)) = \pm c_{o_\fp} \cap (\omega_{\ff, \fb, \lambda}^J\cap \vartheta').
\]
\end{prop}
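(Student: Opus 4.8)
The plan is to compute the right-hand sides directly at the level of explicit cocycles and cycles and to match the outcome with the three factors defining $\cU_\fp(\fb,\lambda,\cD)$; throughout, $g$ denotes either $\l_\fp$ or $o_\fp$. Since $n=2$, the group $E(\ff)_J$ is free abelian of rank $n+s-1=2$; concretely $E(\ff)_J=\langle\ep\rangle\times\langle\pi\rangle$, where $\ep$ generates $E(\ff)$ (chosen, as in the text, in the region $x<y$) and $\fp^e=(\pi)$ with $\pi$ totally positive, $\pi\equiv 1\pmod{\ff}$. Thus $H_{n+s-1}(E(\ff)_J,\Z)=H_2(E(\ff)_J,\Z)\cong\Z$ is generated by the class $\vartheta'$ of the bar $2$-cycle $[\ep|\pi]-[\pi|\ep]$ (up to the orientation sign, which, consistently with the statement, I would not pin down). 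I would represent $\omega_{\ff,\fb,\lambda}^J$ by the homogeneous $(n-1)$-cocycle $\nu_{\fb,\lambda}^J$ of (\ref{e:nupartf}) and $c_g$ by the inhomogeneous $1$-cocycle $z_{f,g}$ of (\ref{e:zgdef}) with $f=1_{\cO_\fp}$, restricted along $E(\ff)_J\hookrightarrow F^*\hookrightarrow F_\fp^*$. As in the proof of Theorem~\ref{t:jr}, I would rewrite $c_g\cap(\omega_{\ff,\fb,\lambda}^J\cap\vartheta')=\pm\,(c_g\cup\omega_{\ff,\fb,\lambda}^J)\cap\vartheta'$, so that the quantity to be computed is the pairing against $\vartheta'$ of the cup product $c_g\cup\omega_{\ff,\fb,\lambda}^J\in H^2(E(\ff)_J,K)$ formed via the integration pairing $C_c(F_\fp,K)\times\Meas(F_\fp,K)\to K$, i.e.\ of $(z_{f,g}\cup\nu_{\fb,\lambda}^J)(\ep,\pi)-(z_{f,g}\cup\nu_{\fb,\lambda}^J)(\pi,\ep)$.

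The decisive input is (\ref{e:jep1}): on the basic pair, $\nu_{\fb,\lambda}^J(1,\ep)=\nu_{\fb,\lambda,\cD}$ (since $\cD=C^*(1,\ep)$ and $\sgn(1,\ep)=1$), which identifies one measure slot of the cup product with the $\Z$-valued measure on $F_\fp$ used to build $\cU_\fp(\fb,\lambda,\cD)$; the other slot is filled by $\nu_{\fb,\lambda}^J(1,\pi)$, the Shintani measure attached to $C^*(1,\pi)$. Substituting $z_{f,g}(\ep)=g(\ep)\cdot 1_{\cO_\fp}$ (as $\ep$ is a unit) and $z_{f,g}(\pi)=g(\pi)\cdot 1_{\pi\cO_\fp}+g\cdot 1_{\cO_\fp-\pi\cO_\fp}$ and expanding, each resulting term is, for $g=\l_\fp$, the value of $\l_\fp$ on one of: (a) a multiplicative integral $\mint x\,d\nu_{\fb,\lambda,\cD}(x)$ over a $\fp$-adic annulus (using that $\l_\fp$ is a continuous homomorphism, so $\int \l_\fp(x)\,d\mu=\l_\fp(\mint x\,d\mu)$); (b) a power of $\pi$; or (c) a power of the global unit $\ep$. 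For $g=o_\fp$ one obtains $o_\fp$ of the same expressions, and the (c)-terms drop out since $\ep$ is a unit.

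I would then show that, after the (interlocking) bookkeeping, the (a)-terms combine into the single multiplicative integral over $\cO_\fp-\pi\cO_\fp$, the (b)-terms combine into $\pi^{\nu_{\fb,\lambda,\cD}(\cO_\fp)}$, and the (c)-terms combine into $\prod_{\ep\in E(\ff)}\ep^{\nu_{\fb,\lambda,\,\ep\cD\cap\pi^{-1}\cD}(\cO_\fp)}=\ep_{\fb,\lambda,\cD,\pi}$ in the notation of (\ref{e:edef}); additivity of $g$ then yields $g(\cU_\fp(\fb,\lambda,\cD))$, up to a sign depending only on the orientation of $\vartheta'$ and the cup/cap conventions, hence the same sign for $g=\l_\fp$ and $g=o_\fp$. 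The engine for the (c)-identification is the planar geometry of Shintani cones: $\cD=C^*(1,\ep)$ tiles $(\R^{>0})^2$ under $\langle\ep\rangle$, so $C^*(1,\pi)$ decomposes, up to boundary faces controlled by the $C^*$-convention, into $\ep$-translates of pieces of $\cD$ and of $\pi^{-1}\cD$; the substitution $\xi\mapsto\ep^{-k}\xi$ in the Shintani sums (legitimate since $\ep$ is a totally positive unit $\equiv 1\pmod{\ff}$, so preserves $\fb^{-1}_J$, the ideal $(\xi)$, $\N\xi$, and the congruence) converts the relevant masses into the $\nu_{\fb,\lambda,\,\ep^k\cD\cap\pi^{-1}\cD}(\cO_\fp)$. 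This is essentially the planar Shintani computation carried out in \cite[\S 5]{das}, and is where the hypothesis $n=2$ enters. Given the two displayed identities with a common sign, Theorem~\ref{t:jep} follows from (\ref{e:upcdef}) by summing over $[\fb]\in G_\ff$ with weights $\chi(\fb)$, the factor $1-\chi(\lambda)\l$ and the common sign cancelling in the ratio.

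I expect the main obstacle to be exactly the combinatorial identification of the (c)-terms: a priori this contribution is a single Shintani value attached to the cone $C^*(1,\pi)$, and one must show that it breaks up, with precisely the multiplicities appearing in (\ref{e:edef}) and nothing left over, after absorbing the $C^*$-boundary-face corrections and the mismatches between the annuli and masses showing up in (a) and (b). Controlling this requires a precise understanding of how the two-dimensional Shintani cones intersect under the rank-one group $E(\ff)$ together with the $\pi$-translation; a general-$n$ argument would need the higher-dimensional Shintani-cocycle relations of \cite{cdg} and \cite{ds} in place of this planar bookkeeping, which is why the proof is given only for $n=2$. A secondary, routine task is tracking the group-action twists in the cup and cap products (suppressed above), which affect only the common sign and not the final ratio in Theorem~\ref{t:jep}.
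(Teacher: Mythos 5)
Your proposal takes essentially the same route as the paper's proof: explicit bar-cycle $\vartheta'$ built from the $\Z$-basis $\{\ep,\pi\}$ of $E(\ff)_J$, explicit cocycle representatives for $c_g$ and $\omega_{\ff,\fb,\lambda}^J$, and a term-by-term identification of the cap product with the three factors of $\cU_\fp(\fb,\lambda,\cD)$. The one thing you flag as the main obstacle — showing the $\ep$-contribution reproduces exactly the exponent in (\ref{e:edef}) — the paper dispatches at the outset by normalizing $\pi\in\cD$ (replacing $\pi$ by $\pi\ep^n$), whence $\ep^n\cD\cap\pi^{-1}\cD$ is nonempty only for $n\in\{0,-1\}$ and the whole $\ep$-term collapses to the single quantity $-\nu^J_{\fb,\lambda}(1,\pi)(\pi\cO_\fp)\cdot g(\ep)$; the paper also picks $f=1_{\pi\cO_\fp}$ rather than $1_{\cO_\fp}$ as the representative for $c_g$, which makes $\pi^{-1}z(\pi)$ split cleanly as $1_{\cO_\fp-\pi\cO_\fp}\cdot g+1_{\cO_\fp}\cdot g(\pi)$, and then obtains the equality $g(\cU_\fp(\fb,\lambda,\cD))=c_g\cap(\omega^J_{\ff,\fb,\lambda}\cap\vartheta')$ on the nose, not merely up to sign.
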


\begin{proof} After replacing $\pi$ by $\pi\ep^n$ for some $n\in\Z$ we may assume that $\pi\in \cD$. Since $\epsilon, \pi$ are a $\Z$-basis of $E(\ff)_J$ the cycle
\[
\vartheta' : = [\pi | \epsilon] - [\epsilon| \pi]
\]
is a generator of $H_2(E(\ff)_J, \Z)$. Let $g: F_{\fp}^*\to K$ be any continuous homomorphism (e.g.\ $g= \l_{\fp}$ or $g=o_{\fp}$). The assertion follows from 
\begin{equation}
\label{e:comp}
g( \cU_\fp(\fb, \lambda, \cD)) = c_g \cap (\omega_{\ff, \fb, \lambda}^J\cap \vartheta')
\end{equation}
Since $\pi\in \cD=C^*(1,\ep)$ we have 
\begin{equation}
\label{e:jep2}
\ep^n \cD\cap \pi^{-1}\cD = C^*(\ep^n, \ep^{n+1}) \cap C^*(\pi^{-1}, \pi^{-1}\ep) =\left\{\begin{array}{cc} C^*(1, \ep\pi^{-1})& \mbox{if $n =0$;}\\
C^*(1, \pi^{-1}) & \mbox{if $n=-1$;}\\
\emptyset & \mbox{otherwise.} 
\end{array}\right.
\end{equation}
Since $E(\ff) = \langle \ep \rangle$ and $\sgn(1, \pi^{-1})=-1$ this implies 
\begin{eqnarray}
g(\epsilon_{\fb, \lambda, \cD, \pi}) & = & \left(\sum_{n\in \Z} n \nu_{\fb, \lambda, \epsilon \cD \cap \pi^{-1} \cD}(\cO_\fp)\right) g(\ep)
\label{e:jep3}\\
& = & - \nu_{\fb, \lambda, C^*(1, \pi^{-1})}(\cO_\fp) \cdot g(\ep)
\nonumber\\
& = & \nu_{\fb, \lambda}^J(1, \pi^{-1})(\cO_\fp)\cdot  g(\ep)\nonumber\\
& = & - \nu_{\fb, \lambda}^J(1, \pi)(\pi \cO_\fp)\cdot g(\ep)\nonumber
\end{eqnarray}
The last equality follows from the fact that $x\mapsto \nu_{\chi, \lambda}^J(1,x)$ is an inhomogeneous 1-cocyle on $E(\ff)_J$. 

We will choose as representative of $c_g$ the inhomogeneous 1-cocycle $z: = z_{1_{\pi\cO_{\fp}}, g}$ 
i.e.\ we choose $f= 1_{\pi\cO_{\fp}} = \pi 1_{\cO_\fp}$ in (\ref{e:zgdef}). A simple computation yields
\begin{equation}
\label{e:jep4}
z(\ep) = (\pi 1_{\cO_\fp}) \cdot g(\ep) 
\end{equation}
and 
\begin{equation}
\label{e:jep5}
\pi^{-1} z(\pi) = 1_{\cO_{\fp} - \pi \cO_{\fp}} \cdot g + 1_{\cO_{\fp}} \cdot g(\pi)
\end{equation}
Put $\nu = \nu_{\fb, \lambda}^J$ and $\nu_{\cD} = \nu_{\fb, \lambda, \cD}$ so that $\nu(1,\ep) = \nu_{\cD}$ by (\ref{e:jep1}). Using (\ref{e:jep3}), (\ref{e:jep4}) and (\ref{e:jep5}) we get 
\begin{eqnarray*}
c_g \cap (\omega_{\ff, \fb, \lambda}^J\cap \vartheta') & = & \int_{F_{\fp}} z(\pi)(x) d(\pi \nu(1, \ep))(x) - \int_{F_\fp} z(\ep)(x) d(\ep\nu(1, \pi))(x)\\
& = &  \int_{F_{\fp}} (\pi^{-1}z(\pi))(x) d\nu_{\cD} (x) - \int_{F_\fp} (\ep^{-1} z(\ep))(x) d\nu(1, \pi)(x)\\
& = & \int_{\cO_{\fp} - \pi \cO_{\fp}} g(x) d\nu_{\cD} (x) + \nu_{\cD}(\cO_{\fp}) \cdot g(\pi) - \nu(1, \pi)(\pi \cO_\fp)\cdot g(\ep)\\
& = & g\left( \mint_{\cO_\fp - \pi \cO_\fp} x \ d\nu_{\cD}(x)\right) + g\left(\pi^{\nu_{\cD}(\cO_{\fp})}\right) + g(\epsilon_{\fb, \lambda, \cD, \pi})\\
& = & g( \cU_\fp(\fb, \lambda, \cD))
\end{eqnarray*}
\end{proof}

\begin{remark} \rm A more indirect approach towards Theorem~\ref{t:jep} is as follows. In \cite[\S 6]{ds} we have defined certain elements $\cU'_{\fp}(\fb, \lambda)$ of $F_{\fp}$ in terms of the Eisenstein cocycle. We expect that these elements agree with the elements $\cU_{\fp}(\fb, \lambda, \cD)$.
 It should be much easier to verify Theorem~\ref{t:jep} with $\cU'_{\fp}(\fb, \lambda)$ replacing $\cU_{\fp}(\fb, \lambda, \cD)$ in (\ref{e:upcdef}). On the other hand in \cite{das} and \cite[\S 6]{ds} a list of functorial properties for the elements $\cU_{\fp}(\fb, \lambda, \cD)$ and $\cU'_{\fp}(\fb, \lambda)$ have been established. Since these properties determine the elements uniquely up to a root of unity neither $\l_\fp(\cU_{\fp, \chi})$ nor $o_{\fp}(\cU_{\fp, \chi})$ will change while replacing the elements $\cU_{\fp}(\fb, \lambda, \cD)$ with $\cU'_{\fp}(\fb, \lambda)$ in the definition of $\cU_{\fp, \chi}$. 
\end{remark}

\end{document}